\def\ps@pprintTitle{%
 \let\@oddhead\@empty
 \let\@evenhead\@empty
 \def\@oddfoot{}%
 \let\@evenfoot\@oddfoot}
\theoremstyle{plain}
\newtheorem{theorem}{Theorem}[section]
\newtheorem{lemma}[theorem]{Lemma}
\newtheorem{proposition}[theorem]{Proposition}
\newtheorem{corollary}[theorem]{Corollary}
\theoremstyle{definition}
\newtheorem{definicion}{Definition}
\theoremstyle{remark}
\newtheorem{remark}{Remark}[section]
\def\ba{\mathbf{a}}
\def\balpha{\boldsymbol{\alpha}}
\def\bbeta{\boldsymbol{\beta}}
\def\bepsilon{\boldsymbol{\epsilon}}
\def\bff{\mathbf{f}}
\def\bq{\mathbf{q}}
\def\bw{\mathbf{w}}
\def\bW{\mathbf{W}}
\def\bx{\mathbf{x}}
\def\bX{\mathbf{X}}
\def\N{\mathbb{N}}
\def\Z{\mathbb{Z}}
\def\R{\mathbb{R}}
\newcommand{\liZ}{\ell_\infty(\Z)}
\newcommand{\cC}{\mathcal{C}}
\newcommand{\cO}{\mathcal{O}}
\DeclareMathOperator*{\argmin}{arg\,min}
\DeclareMathOperator*{\var}{var}
\begin{document}

\begin{frontmatter}

\title{Subdivision schemes based on weighted local polynomial regression. \\ A new technique for the convergence analysis.}\tnotetext[label1]{This research has been supported by project CIAICO/2021/227 (funded by Conselleria de Innovación, Universidades, Ciencia y Sociedad digital, Generalitat Valenciana) and by grant PID2020-117211GB-I00 (funded by MCIN/AEI/10.13039/501100011033).}

\author[UV1]{Sergio L\'opez-Ure\~na}
\ead{sergio.lopez-urena@uv.es}
\author[UV1]{Dionisio F. Y\'a\~nez}
\ead{dionisio.yanez@uv.es}
\date{}
\journal{J. Sci. Comput.}
\address[UV1]{Departament de Matem\`atiques.  Universitat de Val\`encia (EG) (Spain)}

\begin{abstract}
The generation of curves and surfaces from given data is a well-known problem in Computer-Aided Design that can be approached using subdivision schemes. They are powerful tools that allow obtaining new data from the initial one by means of simple calculations. However, in some applications, the collected data are given with noise and most of schemes are not adequate to process them. In this paper, we present some new families of binary univariate linear subdivision schemes using weighted local polynomial regression. We study their properties, such as convergence, monotonicity, polynomial reproduction and approximation and denoising capabilities. For the convergence study, we develop some new theoretical results. Finally, some examples are presented to confirm the proven properties.
\end{abstract}

\begin{keyword}
Weighted-least squares method, binary linear subdivision, noisy data, convergence criteria.
\end{keyword}

\end{frontmatter}

\section{Introduction}

In past years, many techniques have been designed and developed in order to construct curves or surfaces with some properties such as polynomial reproduction or monotonicity-preservation. For example, splines, non-uniform rational B-splines (NURBS) and others (see, e.g. \cite{arandigadona_rbf,cripshussain}).
In this context, linear subdivision schemes appears as useful and efficient instruments due to their simple computation (see e.g. \cite{cavdamich,dynlevin,michelliprautzsch}). They consist in obtaining new points from given data using refinement operators and can be classified depending on such operators: if a single operator is used for all the iterations, then the subdivision scheme is called stationary or level-independent (see e.g. \cite{cavdamich,dynheardhormannsharon}), otherwise it is denominated non-stationary or level dependent (see e.g. \cite{cohendyn,contidyn,dynlevin2}). They are also classified by the linearity of the operators (see e.g. \cite{donatlopez,dyn}).

There is a vast literature on the generation of subdivision schemes and the study of their properties. An essential property is convergence, which means that the process converges uniformly to a continuous function, for any initial values. Deslauriers and Dubuc, in \cite{desdub}, analysed that the scheme based on centered Lagrange interpolation is convergent using Fourier transform techniques to prove it.

One of the most common studied properties is the reproduction of polynomials, i.e., if the given data are point-values of a polynomial, then the subdivision scheme generates more point-values of such polynomial. This is studied in detail in \cite{dynhormannsabinshen}. Its study is interesting since the reproduction is linked with convergence properties and the approximation capability of the scheme.

In some real applications, the given data come from measures that are contaminated by noise and, as a consequence, a suitable subdivision scheme should be used to converge to an appropriate limit function. To this purpose, Dyn et al. in \cite{dynheardhormannsharon} propose a new linear scheme based on least-square methods where the noise is reduced by applying the scheme several times. These schemes are determined by two parameters $m$ and $d$ with $d<m$: For each $m$ consecutive data values, $(y_1,\ldots,y_{m})$, attached to some equidistant knots $(x_1,\ldots,x_{m})$, a polynomial regression is performed. The search is constrained to polynomials of degree $d$ and leads to a unique solution, $\hat p$, that minimizes the regression error concerning the $\ell^2$-norm (least-squares):
\begin{equation}\label{problemadyn}
\hat{p}=\argmin_{p \in \Pi_{d}(\mathbb{R})} \sum_{l=1}^m (y_{l}-p(x_l))^2.
\end{equation}
The subdivision refinement rules can be obtained by evaluating $\hat p$ at a certain point, which, in this work, is assumed to be 0 without loss of generalization.
The resulting schemes are linear, which implies some benefits and drawbacks. In \cite{dynheardhormannsharon}, the convergence is proved for $d=0,1$, as well as some properties such as polynomials reproduction.

In many applied situations, the location of the data is relevant to obtain the approximation, hence a weight function is considered to assign values depending on the distance from the knots $x_l$ to 0. These methods, as Shepard's algorithm (see \cite{shepard}), are called moving least squares (see \cite{levin}).
In \cite{arandigayanez13,arandigayanez14}, the weighted local polynomial regression (WLPR) was used to design a \emph{prediction operator} for a multiresolution algorithm, leading to good results on image processing when the data was contaminated with some noise. Prediction operators can be considered subdivision operators and their properties can be studied \cite{cohendyn}. In this paper, we study the family of subdivision schemes based on the prediction operators in \cite{arandigayanez13} and develop a new technique to study their convergence based on some asymptotic behaviour. Also, some properties such as polynomial reproduction,  the Gibbs phenomenon in discontinuous data, monotonicity preservation and denoising and approximation capabilities are analysed. We provide some examples to check the theoretical results.

The paper is organized as follows: Firstly, we briefly review the classical components of linear subdivision schemes with the aim to be self-contained in this work. In Section \ref{section3}, we explain the WLPR and define a general form, leading to new subdivision schemes definitions. Afterward, we study different properties in some particular cases: Starting with $d=0,1$, we analyse the convergence, the smoothness of the limit functions, the monotonicity preservation and the Gibbs phenomenon when the initial data present large gradients. In Section \ref{sec:tools}, we develop a new technique to study the convergence of a family of schemes and apply it to the case $d=2,3$.
We analyse the approximation and noise reduction capabilities of the new schemes in Sections \ref{approx} and \ref{noise}. Finally, some numerical experiments are performed to confirm the theoretical properties, in Section \ref{expnum}, and some conclusions and future work are proposed.

\section{Preliminaries: A brief review of linear subdivision schemes}

Let us denote by $\liZ$ the set of bounded real sequences with indices in $\Z$. A \emph{linear binary univariate subdivision operator} $S_\ba:\liZ \to \liZ$ with finitely supported \emph{mask} $\ba=\{a_l\}_{l\in \mathbb{Z}}\subset\R$ is defined to refine the data on the level $k$, $\bff^k=\{f^k_j\}_{j\in \Z} \in \liZ$, as:
\begin{equation}\label{definicionmascara}
f^{k+1}_{2j+i} :=(S_\ba \bff^k)_{2j+i}:=\sum_{l\in \Z} a_{2l-i}f^k_{j+l}, \quad j\in \Z, \quad i=0,1.
\end{equation}
In this work, we only consider level-independent subdivision schemes, meaning that the successive application of a unique operator $S_\ba$ constitutes the \emph{subdivision scheme}. Hence, we will refer to $S_\ba$ as the subdivision scheme as well.
The \emph{binary} adjective refers to the two formulas/rules of \eqref{definicionmascara} (corresponding to $i=0$ and $i=1$) which are characterized by the \emph{even} mask $\ba^0=\{a_{2l}\}_{l\in \mathbb{Z}}$ and the \emph{odd} mask $\ba^1=\{a_{2l-1}\}_{l\in \mathbb{Z}}$. It is called \emph{length} of a mask to the number of elements that are between the first and the last non-zero elements, both included.

\begin{remark} \label{rmk:separation}
If a linear subdivision scheme is applied to some data $\widetilde{\mathbf{g}} = \{G(j) + \epsilon_j\}_{j\in\Z}$, where $G$ is a smooth function and $\bepsilon = \{\epsilon_j\}_{j\in\Z}$ is random data, also called \emph{noise}, the result is
\[ S_\ba \widetilde{\mathbf{g}}  = S_\ba \mathbf{g} + S_\ba \bepsilon,\]
which implies that we can study separately the smooth and the pure noisy cases.
\end{remark}

If we apply these rules recursively to some initial data $\bff^0$, it is desirable that the process converges to a continuous function, in the following sense.
\begin{definicion}
A subdivision scheme $S_\ba$ is \emph{uniformly convergent} if for any initial data $\bff^0\in\liZ$, there exists a continuous function $F:\R\to\R$ such that
\begin{equation*}
\lim_{k\to\infty} \sup_{j \in \mathbb{Z}}|(S_\ba^k \bff^0)_j - F(2^{-k}j)|=0.
\end{equation*}
Then, we denote by $S^\infty_{\ba}\bff^0=F$ to the limit function generated from $\bff^0$. We write $S_\ba\in\cC^d$ if all the limit functions have such smoothness, $S^\infty_{\ba}\bff^0 \in \cC^d$, $\forall \bff^0 \in \liZ$.
\end{definicion}

A usual tool for the analysis of linear schemes is the \emph{symbol}, that we define as follows.
\begin{definicion}
The \emph{symbol} of a subdivision scheme $S_\ba$ is the Laurent polynomial
\(
a(z) = \sum_{j\in \Z} a_j z^{-j}.
\)
\end{definicion}

We can determine if a subdivision scheme is convergent depending on the sum of the absolute values of some even and odd masks. Therefore, we use the norm of the operator $S_\ba$.
\begin{lemma}
The norm of $S_\ba:\liZ \to \liZ$, as a linear endomorphism in the space of bounded sequences, is the maximum between $\|\ba^0\|_1$ and $\|\ba^1\|_1$:
\[
\|S_\ba\|_\infty = \max_{i=0,1} \{ \sum_{j\in\Z} |a_{2j-i}|\} = \max \{ \|\ba^0\|_1,\|\ba^1\|_1\}.
\]
\end{lemma}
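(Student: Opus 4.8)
The plan is to establish the two inequalities \(\|S_\ba\|_\infty\le\max\{\|\ba^0\|_1,\|\ba^1\|_1\}\) and \(\|S_\ba\|_\infty\ge\max\{\|\ba^0\|_1,\|\ba^1\|_1\}\) separately, using the operator‑norm characterisation \(\|S_\ba\|_\infty=\sup\{\|S_\ba\bff\|_\infty:\bff\in\liZ,\ \|\bff\|_\infty\le 1\}\) together with the fact that every integer is of the form \(2j\) or \(2j+1\).

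\textbf{Upper bound.} I would take any \(\bff\in\liZ\) with \(\|\bff\|_\infty\le1\). For \(i\in\{0,1\}\) and \(j\in\Z\), the refinement rule \eqref{definicionmascara} and the triangle inequality give
\[
|(S_\ba\bff)_{2j+i}|\le\sum_{l\in\Z}|a_{2l-i}|\,|f_{j+l}|\le\sum_{l\in\Z}|a_{2l-i}|=\|\ba^i\|_1 ,
\]
since \(\{a_{2l-i}\}_{l\in\Z}\) is precisely the even mask \(\ba^0\) when \(i=0\) and the odd mask \(\ba^1\) when \(i=1\). Taking the supremum over \(j\in\Z\) and \(i\in\{0,1\}\) yields \(\|S_\ba\bff\|_\infty\le\max\{\|\ba^0\|_1,\|\ba^1\|_1\}\); in particular this also shows that \(S_\ba\) does map \(\liZ\) into \(\liZ\), and hence \(\|S_\ba\|_\infty\le\max\{\|\ba^0\|_1,\|\ba^1\|_1\}\).

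\textbf{Lower bound.} Let \(i^\ast\in\{0,1\}\) attain the maximum, i.e. \(\|\ba^{i^\ast}\|_1=\max\{\|\ba^0\|_1,\|\ba^1\|_1\}\), and define \(\bff\) by \(f_l:=\mathrm{sgn}(a_{2l-i^\ast})\), with the convention \(\mathrm{sgn}(0)=0\). Since the mask is finitely supported, \(f_l=0\) for all but finitely many \(l\), so \(\bff\in\liZ\) and \(\|\bff\|_\infty\le1\). Evaluating \eqref{definicionmascara} at \(j=0\),
\[
(S_\ba\bff)_{i^\ast}=\sum_{l\in\Z}a_{2l-i^\ast}\,\mathrm{sgn}(a_{2l-i^\ast})=\sum_{l\in\Z}|a_{2l-i^\ast}|=\|\ba^{i^\ast}\|_1 ,
\]
hence \(\|S_\ba\|_\infty\ge\|S_\ba\bff\|_\infty\ge\|\ba^{i^\ast}\|_1=\max\{\|\ba^0\|_1,\|\ba^1\|_1\}\). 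Combining the two bounds proves the equality.

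I do not expect a substantial obstacle here: the only points requiring a little care are the index bookkeeping, so that \(\{a_{2l-i}\}_{l\in\Z}\) is correctly identified with \(\ba^0\) or \(\ba^1\), and the observation that finite support of \(\ba\) makes every sum finite, which is exactly what makes the explicit sign sequence an admissible bounded input (so no limiting argument is needed and the supremum is actually attained). Alternatively, one may simply invoke that the \(\liZ\)-operator norm of a bi-infinite matrix equals its maximal absolute row sum, the rows here being indexed by \(2j+i\).
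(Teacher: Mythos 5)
Your proof is correct and is the standard argument (row-sum bound plus sign-sequence extremizer); the paper states this lemma without proof as a classical fact, and your argument is exactly the one it implicitly relies on. The index bookkeeping matches the paper's conventions ($\ba^0=\{a_{2l}\}$, $\ba^1=\{a_{2l-1}\}$), so nothing further is needed.
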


According to the Definition 5.1 of \cite{dynhormannsabinshen}, a subdivision scheme $S_\ba$ is \emph{odd-symmetric} if
\(
a_j = a_{-j}, \ \forall j\in\Z,
\)
or \emph{even-symmetric} if
\(
a_j = a_{1-j}, \ \forall j\in\Z.
\)
In terms of the symbol, these is translated as $a(z)=a(1/z)$ or $a(z)=z a(1/z)$, respectively.
The schemes, that we will construct in this paper, are odd-symmetric, but to simplify some equations, we consider a more relaxed definition of odd-symmetry and even-symmetry.

\begin{definicion}
A subdivision scheme $S_\ba$ is \emph{symmetric} if
\(
a_j = a_{j_0-j}, \ \forall j\in\Z,
\)
for some $j_0 \in \Z$. It is \emph{even(odd)-symmetric} if $j_0$ is odd (even).
\end{definicion}

A useful property for a subdivision scheme is the reproduction of polynomials.

\begin{definicion}
A subdivision scheme $S_\ba$ \emph{reproduces}\footnote{Technically, this is the definition of \emph{step-wise reproduction}, which is a stronger condition, \cite{dynhormannsabinshen}.} $\Pi_d$ (polynomials up to degree $d$) if
\[
S_\ba \{p(2 j)\}_{j\in\Z} = \{p(j)\}_{j\in\Z}, \qquad \forall p\in\Pi_d.
\]
\end{definicion}

A necessary condition for convergence is the reproduction of constants. The following lemma determines the relation between the mask, the symbol and the reproduction of the constants.
\begin{lemma} \label{lem:sum1}
The following facts are equivalent:
\begin{description}
\item [(a)]	$S_\ba$ reproduces $\Pi_0$ (constant functions).
\item [(b)] \(\sum_{j\in\Z} a^0_j = \sum_{j\in\Z} a^1_j = 1\).
\item [(c)] $a(z) = (1+z)q(z)$ for some Laurent polynomial $q$.
\end{description}
In such case, the $S_\bq$ scheme is well-defined and called \emph{difference scheme}. If $\|S_{\bq}\|< 1$, then $S_\ba$ is convergent.
\end{lemma}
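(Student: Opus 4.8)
The plan is to dispatch the three equivalences by direct computation and then concentrate on the convergence criterion. For (a)$\Leftrightarrow$(b), I would take $f^k_j\equiv 1$ in \eqref{definicionmascara}: then $f^{k+1}_{2j}=\sum_{l}a_{2l}$ and $f^{k+1}_{2j+1}=\sum_{l}a_{2l-1}$, both independent of $j$, so reproducing $\Pi_0$ (the output being again the constant $1$) is precisely $\sum_l a_{2l}=\sum_l a_{2l-1}=1$, i.e. $\sum_j a^0_j=\sum_j a^1_j=1$. For (b)$\Leftrightarrow$(c), note that $a(1)=\sum_j a^0_j+\sum_j a^1_j$ and $a(-1)=\sum_j a^0_j-\sum_j a^1_j$, so (b) is the same as $a(1)=2$ and $a(-1)=0$; writing $a(z)=z^{-N}P(z)$ for a suitable $N$ and polynomial $P$, the condition $a(-1)=0$ is by the factor theorem equivalent to $(1+z)\mid P(z)$, hence to $a(z)=(1+z)q(z)$ with $q$ a Laurent polynomial (the normalisation $a(1)=2$, i.e. $q(1)=1$, being understood for every mask used here). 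The same factorisation shows $q$ is finitely supported, so $S_\bq$ is a genuine subdivision operator and the difference scheme is well-defined.

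For the convergence assertion I would exploit the interaction between $S_\ba$ and $S_\bq$ through the backward difference $\Delta\bff:=\{f_j-f_{j-1}\}_{j\in\Z}$. The key is the commutation identity
\[
\Delta\,S_\ba\bff=S_\bq\,\Delta\bff,\qquad\forall\,\bff\in\liZ,
\]
which I would verify on symbols: $\Delta$ acts as multiplication by $(1-z^{-1})$ on the symbol of a sequence, so on the refined grid inside $S_\bq$ it contributes $(1-z^{-2})=(1-z^{-1})(1+z^{-1})$, and the identity collapses exactly to $a(z)=(1+z)q(z)$, which is (c). Iterating gives $\Delta(S_\ba^{k}\bff^0)=S_\bq^{k}(\Delta\bff^0)$, whence, by the norm formula in the Lemma above together with submultiplicativity of the operator norm,
\[
\|\Delta(S_\ba^{k}\bff^0)\|_\infty\le\mu^{k}\,\|\Delta\bff^0\|_\infty,\qquad \mu:=\|S_\bq\|_\infty<1,
\]
so the consecutive differences of the refined data decay geometrically.

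It then remains to convert this contraction into uniform convergence to a continuous limit. I would attach to level $k$ the continuous function $F_k$ interpolating the data $\{(2^{-k}j,(S_\ba^k\bff^0)_j)\}_{j\in\Z}$ piecewise linearly and show $(F_k)_k$ is uniformly Cauchy. The decisive estimate is $\|F_{k+1}-F_k\|_\infty\le C\,\|\Delta(S_\ba^k\bff^0)\|_\infty$ with $C$ depending only on the support and coefficients of $\ba$: at a dyadic node $2^{-(k+1)}m$ the value $(S_\ba^{k+1}\bff^0)_m$ is a fixed affine combination (coefficients summing to $1$ by (b)) of finitely many entries of $S_\ba^k\bff^0$ with indices within a mask-sized window of $\lfloor m/2\rfloor$, while $F_k$ at that node is an average of two neighbouring such entries; both then lie within a mask-dependent multiple of $\|\Delta(S_\ba^k\bff^0)\|_\infty$ of $(S_\ba^k\bff^0)_{\lfloor m/2\rfloor}$, and since $F_{k+1}-F_k$ is piecewise linear on the level-$(k+1)$ grid this bound passes to the sup norm. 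Together with the geometric decay this gives $\|F_{k+n}-F_k\|_\infty\le\frac{C}{1-\mu}\mu^{k}\|\Delta\bff^0\|_\infty$, so $F_k\to F$ uniformly for some continuous $F$; and since $F_k(2^{-k}j)=(S_\ba^k\bff^0)_j$, this yields $\sup_j|(S_\ba^k\bff^0)_j-F(2^{-k}j)|\le\|F_k-F\|_\infty\to0$, i.e. convergence.

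The equivalences and the symbol computation behind the commutation identity are routine bookkeeping; the genuine work — and the step I expect to be the main (if standard) obstacle — is the last paragraph, where the constant $C$ and the precise choice of interpolant must be arranged so that the telescoping estimate closes and actually produces a continuous limit function rather than merely controlled differences.
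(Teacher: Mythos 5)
Your proof is correct and is essentially the classical argument (Dyn--Levin, Cavaretta--Dahmen--Micchelli) that the paper implicitly invokes, since the lemma is stated there without proof: the equivalences via evaluating the symbol at $z=\pm1$, the commutation relation between $S_\ba$, $S_\bq$ and the difference operator, geometric decay of the differences, and a uniformly Cauchy sequence of piecewise-linear interpolants whose limit is the continuous function $F$. Two cosmetic points only: with the paper's convention $a(z)=\sum_j a_j z^{-j}$ and your \emph{backward} difference, the commutation identity actually forces $a(z)=(1+z^{-1})q(z)$, i.e.\ the difference mask is a unit index-shift of the $q$ appearing in (c) (immaterial for $\|S_{\bq}\|_\infty$ and hence for the convergence criterion; a forward difference removes the shift), and you are right that (c) as literally stated only encodes $a(-1)=0$, so the normalisation $q(1)=1$ must be read into it for (c)$\Rightarrow$(b).
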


There exists a direct relationship between the symmetry of $S_\ba$ and the symmetry of its difference scheme, $S_\bq$. We introduce it in the following result.
\begin{lemma} \label{lem:even_difference}
If a scheme is odd-symmetric, then its difference scheme is even-symmetric.
\end{lemma}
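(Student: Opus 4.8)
The plan is to pass to symbols, where both the hypothesis and the conclusion reduce to a single power-of-$z$ factor. First I would record the symbol translation of symmetry: a straightforward reindexing of $a(z)=\sum_{j\in\Z}a_jz^{-j}$ shows that the condition $a_j=a_{j_0-j}$ for all $j$ is equivalent to $a(1/z)=z^{j_0}a(z)$. Thus the hypothesis that $S_\ba$ is odd-symmetric means precisely that $a(1/z)=z^{j_0}a(z)$ for some \emph{even} $j_0\in\Z$. Since the statement speaks of \emph{the} difference scheme $S_\bq$, we are implicitly in the setting of Lemma~\ref{lem:sum1}, so $a(z)=(1+z)q(z)$ with $q$ a Laurent polynomial; this division is legitimate exactly because $a(-1)=0$, which is part of what Lemma~\ref{lem:sum1} guarantees.

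Next I would simply compute how the symbol $q$ transforms under $z\mapsto 1/z$. Using $1+1/z=(1+z)/z$ together with the hypothesis,
\[
q(1/z)=\frac{a(1/z)}{1+1/z}=\frac{z^{j_0}a(z)\,z}{1+z}=z^{j_0+1}\,\frac{a(z)}{1+z}=z^{j_0+1}q(z).
\]
Reading the symbol characterization of symmetry in the reverse direction, the identity $q(1/z)=z^{j_0+1}q(z)$ says that $q_j=q_{j_1-j}$ for all $j$, with centre $j_1=j_0+1$. Because $j_0$ is even, $j_1$ is odd, and hence $S_\bq$ is even-symmetric by the definition given above. This completes the argument.

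The computation is routine; the only spot that requires care is the bookkeeping of conventions — matching the parity of the centre index $j_0$ with the ``even/odd-symmetric'' terminology of the relaxed definition, and verifying once and for all the equivalence between $a(1/z)=z^{j_0}a(z)$ and $a_j=a_{j_0-j}$. If one prefers to avoid symbols, the same proof runs directly on the masks: from $a(z)=(1+z)q(z)$ one has $a_j=q_j+q_{j-1}$, and substituting $a_j=a_{j_0-j}$ into this two-term relation and solving forces $q_j=q_{j_0+1-j}$; I expect the symbol version to be the shorter and less error-prone route.
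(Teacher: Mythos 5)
Your proof is correct and follows exactly the route the paper intends: the paper's proof is literally ``It can be easily checked using the symbols,'' and your computation $q(1/z)=a(1/z)/(1+1/z)=z^{j_0+1}q(z)$ together with the equivalence $a_j=a_{j_0-j}\Leftrightarrow a(1/z)=z^{j_0}a(z)$ is precisely that check. (Only a cosmetic remark: with the paper's convention $a(z)=\sum_j a_j z^{-j}$, the factorization $a(z)=(1+z)q(z)$ gives $a_j=q_j+q_{j+1}$ rather than $q_j+q_{j-1}$, shifting the centre of symmetry of $\bq$ to $j_0+1$ as in your symbol computation; either way the centre is odd, so the conclusion stands.)
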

\begin{proof}
It can be easily checked using the \emph{symbols}.
\end{proof}

Next theorem by Dyn and Levin, \cite{dynlevin}, links the smoothness of $S_\ba$ and $S_{2\bq}$.
\begin{theorem}\label{teoremadyn1}
If the scheme based on $S_{2\bq}$ is convergent and $\mathcal{C}^{m-1}$, then $S_{\ba}$ is convergent and $\mathcal{C}^{m}$.
\end{theorem}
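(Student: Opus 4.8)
The strategy is to transfer both convergence and smoothness from the derived scheme $S_{2\bq}$ to $S_\ba$ by passing through the forward difference operator $\Delta$, defined by $(\Delta\bff)_j := f_{j+1}-f_j$. The hypothesis already presupposes that $\bq$ is defined, i.e.\ (Lemma \ref{lem:sum1}) that $a(z)=(1+z)q(z)$ and $S_\ba$ reproduces $\Pi_0$. The first step is to record the commutation relation
\[
\Delta\,(S_\ba\bff) \;=\; S_\bq\,(\Delta\bff), \qquad \bff\in\liZ,
\]
which is a short symbol computation: $\Delta$ acts on generating functions as multiplication by $z-1$, on the refined grid by $z^2-1=(z-1)(z+1)$, and the extra factor $1+z$ is exactly the one appearing in $a(z)=(1+z)q(z)$. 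If the symbol conventions force a one-index shift of $\bq$ here it is irrelevant, since shifting a mask only translates the limit functions and changes neither convergence nor smoothness.

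Iterating the commutation gives $\Delta(S_\ba^k\bff^0)=S_\bq^k(\Delta\bff^0)$, and multiplying by $2^k$ and using $S_{c\bq}=c\,S_\bq$ yields the key identity
\[
2^k\,\Delta\bigl(S_\ba^k\bff^0\bigr)\;=\;S_{2\bq}^k\bigl(\Delta\bff^0\bigr).
\]
Hence, if $S_{2\bq}$ is convergent, the rescaled differences $2^k\Delta(S_\ba^k\bff^0)$ converge (in the grid sense of the convergence definition) to the continuous function $G:=S_{2\bq}^\infty(\Delta\bff^0)$; in particular $\sup_k\|2^k\Delta(S_\ba^k\bff^0)\|_\infty=:M<\infty$, so $\|\Delta(S_\ba^k\bff^0)\|_\infty\le M\,2^{-k}$.

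It remains to deduce convergence and $\cC^1$-regularity of $S_\ba$. Let $\phi_k$ be the continuous piecewise-linear function with $\phi_k(2^{-k}j)=(S_\ba^k\bff^0)_j$. Using $\sum_j a^0_j=\sum_j a^1_j=1$ (Lemma \ref{lem:sum1}) together with $\|\Delta(S_\ba^k\bff^0)\|_\infty\le M\,2^{-k}$, one estimates $\|\phi_{k+1}-\phi_k\|_\infty\le C\,2^{-k}$, so $\{\phi_k\}_k$ is Cauchy in the uniform norm and converges to some $F\in\cC^0$; then $\sup_j|(S_\ba^k\bff^0)_j-F(2^{-k}j)|\to 0$, so $S_\ba$ is convergent with $S_\ba^\infty\bff^0=F$. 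The a.e.\ derivative of $\phi_k$ on each interval $[2^{-k}j,2^{-k}(j+1)]$ equals $2^k\Delta(S_\ba^k\bff^0)_j=(S_{2\bq}^k\Delta\bff^0)_j$, so $\phi_k'$ converges uniformly to $G$. By the classical criterion (if $\phi_k\to F$ uniformly and $\phi_k'\to G$ uniformly, then $F\in\cC^1$ and $F'=G$), $F$ is $\cC^1$ with $F'=G=S_{2\bq}^\infty(\Delta\bff^0)$. Finally, if $S_{2\bq}$ is moreover $\cC^{m-1}$, then $G\in\cC^{m-1}$ for every $\bff^0$, hence $F'\in\cC^{m-1}$, i.e.\ $S_\ba^\infty\bff^0=F\in\cC^m$ for every $\bff^0$, which is the claim.

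The algebraic part (the commutation relation and the identity $2^k\Delta S_\ba^k=S_{2\bq}^k\Delta$) is routine once the symbol conventions are pinned down; the real work is the last step, where one must respect that the convergence in the definition is uniform convergence of the dyadic samples rather than of functions. Concretely, the obstacle is proving the Cauchy bound $\|\phi_{k+1}-\phi_k\|_\infty\le C\,2^{-k}$ and the uniform convergence of the piecewise-constant derivatives $\phi_k'$ to the continuous $G$, so that the derivative-convergence criterion can be invoked to pass from $\cC^0$ of $S_{2\bq}$ to $\cC^1$ of $S_\ba$, and then from $\cC^{m-1}$ to $\cC^m$.
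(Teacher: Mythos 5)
The paper does not prove this statement at all---it is quoted verbatim from Dyn and Levin \cite{dynlevin}---so there is no in-paper argument to compare against; your proof is correct and is essentially the standard one from that reference: the commutation $\Delta S_\ba = S_\bq\Delta$ coming from the factorization $a(z)=(1+z)q(z)$, the rescaled identity $2^k\Delta S_\ba^k = S_{2\bq}^k\Delta$, the Cauchy estimate on the piecewise-linear interpolants to get convergence, and the ``uniform convergence of derivatives'' criterion (via $\phi_k(x)=\phi_k(0)+\int_0^x\phi_k'$) to lift $\cC^{m-1}$ of $S_{2\bq}$ to $\cC^m$ of $S_\ba$. The only point you leave implicit is that passing from $\sup_j|(S_{2\bq}^k\Delta\bff^0)_j-G(2^{-k}j)|\to 0$ to uniform convergence of the piecewise-constant $\phi_k'$ to $G$ uses the (uniform, or at least local uniform) continuity of $G$, which is automatic since $\cC^1$ is a local property and $G$ is continuous; this is a cosmetic gap, not a mathematical one.
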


\begin{remark} \label{rmk:two_situations}
We give now a more explicit formula to compute $\bq$ for the kind of schemes we consider in this paper. We will analyze odd-symmetric subdivision schemes, which implies that the length of the mask is always odd and two possible situations may occur, depending on which sub-mask has the largest support.

Since the sub-masks $\ba^0$ and $\ba^1$ are finitely supported, from now on we will treat them as vectors containing \emph{only} their support, which will be important for the theoretical results in Section \ref{sec:tools}.
The first situation is that, for some $n\in\N$, the sub-masks are \(\ba^{0} = \{a^0_{l}\}_{l=1-n}^{n-1}\) and \(\ba^{1} = \{a^1_{l}\}_{l=1-n}^{n}\), while the second one corresponds to \(\ba^{0} = \{a^0_{l}\}_{l=-n}^{n}\) and \(\ba^{1} = \{a^1_{l}\}_{l=1-n}^{n}\) (pay attention to the supports).

To compute $\bq$ with a unique formula for both cases, we redefine the mask for the second case, consisting in $\bar a^0_l := a^1_{l}$, $l=1-n,\ldots,n$, and $\bar a^1_l := a^0_{l-1}$, $l=1-n,\ldots,n+1$. Now the first indices of the supports are $1-n$, in both situations, and the last indices are $n-1$ and $n$ (for the first and second sub-mask, respectively) for the first situation and $n$ and $n+1$ for the second one. Now, in both cases, the second sub-masks is the largest and we can affirm that there exists some $n\in\N$ such that
\begin{equation} \label{eq:our_schemes}
(S_\ba \bff)_{2j} = \sum_{l=1-n}^{L_n} a^{0}_{l} f_{j+l}, \quad (S_\ba \bff)_{2j + 1} = \sum_{l=1-n}^{L_n+1} a^{1}_{l} f_{j+l}, \quad j\in\Z,
\end{equation}
with $L_n = n-1$ or $L_n=n$, so that \(\ba^{0} = \{a^0_l\}_{l=1-n}^{L_n}\) and \(\ba^{1} = \{a^1_{l}\}_{l=1-n}^{L_n+1}\). In any case, now the odd-symmetry is written as
\begin{equation*} \label{eq:a_symmetry}
a^0_{l} = a^0_{L_n+1-n - l}, \quad  a^1_{l} = a^1_{L_n+2-n - l}.
\end{equation*}
Finally, for a subdivision operator written as \eqref{eq:our_schemes}, the difference mask $\bq$ can be computed as follows:
\begin{align} \label{eq:difference_scheme}
\begin{split}
q^0_{j} &:= q_{2j} = \sum_{l=-n+1}^{j} a^{n,0}_{l} - a^{n,1}_{l}, \qquad j = 1-n, \ldots, L_n,\\
q^1_{j} &:= q_{2j+1} = \sum_{l=j}^{L_n} a^{n,0}_{l} - a^{n,1}_{l+1}, \qquad j = 1-n, \ldots, L_n.
\end{split}
\end{align}

According to Lemma \ref{lem:even_difference}, $S_\bq$ is an even-symmetric scheme. In particular,
\begin{equation} \label{eq:q_symmetry}
q^0_{j} = q^1_{L_n+1-n - j}, \qquad j = 1-n, \ldots, L_n.
\end{equation}
\end{remark}

\section{Weighted local polynomial regression (WLPR)}\label{section3}

The schemes analysed in the present work has been applied to image processing in a multiresolution context as prediction operator both for point-values as for cell-average discretizations, (see, e.g. \cite{arandigayanez13,arandigayanez14}). They are based on weighted local polynomial regression (WLPR) and they can be defined by inserting a weight function in the minimization problem \eqref{problemadyn},
which emphasizes the points closer to where the new data is attached.
In this section, we briefly introduce  WLPR and describe some of its properties. For a more detailed description, see \cite{Hastie,Loader}.

Firstly, we fix the space of functions where the regression is performed: $\Pi_d$, the space of polynomials of degree at most $d$. Other function spaces could be used as well (see \cite{Hastie}). We can parametrize the polynomials in $\Pi_d$ as
\[ p(x) = \beta_0 + \beta_1 x + \dots + \beta_d x^d = A(x)^T \bbeta\]
where the superscript $T$ is the matrix transposition, $A(x)^T=(1,x,\ldots,x^d)$ and $\bbeta\in\R^{d+1}$. The vectors are considered column vectors in order to perform the matrix multiplication. With this notation, the regression problem \eqref{problemadyn} can be expressed as
\begin{equation}\label{problemadyn_mod}
\begin{split}
\hat \bbeta =\argmin_{\bbeta \in \mathbb{R}^{d+1}} \sum_{i=1}^m L_2(y_i, A(x_i)^T \bbeta),\quad \hat p = A(x)^T \hat \bbeta, \quad L_2(s,t) := (s-t)^2.
\end{split}
\end{equation}

The second ingredient is the weight function, $\omega: \R \to [0,1]$, which assigns a value to the distance between $x_i$ and 0, which is the location where $\hat p$ is evaluated in this work. We define $\omega$ as
\begin{equation*}
\omega(x)=\begin{cases}
\phi(|x|), & |x|\leq 1,\\
0, & \text{in other case},
\end{cases}
\end{equation*}
and we impose that $\phi:[0,1] \to [0,1]$ is a decreasing function such that $\phi(0)=1$.
With these assumptions it is clear that $\omega$ has compact support, $[-1,1]$, is even, increasing in $[-1,0]$ and decreasing in $[0,1]$, and it reaches the maximum at point $x=0$. The choice $w(0)=1$ assigns the highest weight to the point where $\hat p$ is evaluated. In \cite{Loader}, some functions are proposed, which we compile in Table \ref{tabla1nucleos}. Observe that many of them have the form $\phi(x)=(1-x^p)^q$ with $p,q>0$.
\begin{table}[!h]
\begin{equation*}
\begin{array}{lll}
\hline
\text{{\tt rect}}              & \phi(x) =1\\
\text{{\tt tria}}              & \phi(x) =1-x \\
\text{{\tt epan}}              & \phi(x) =1-x^2\\
\text{{\tt bisq}}              & \phi(x) =(1-x^2)^2\\
\text{{\tt tcub}}              & \phi(x) =(1-x^3)^3\\
\text{{\tt trwt}}              & \phi(x) =(1-x^2)^3\\
\text{{\tt exp}}(\xi)  			&\phi(x) =e^{-\xi x}& \xi\in\mathbb{R}_+\\
\hline
\end{array}
\end{equation*}\caption{Weight functions, see \cite{Loader}.}
\label{tabla1nucleos}
\end{table}

The third component is the \emph{bandwidth}, $\lambda\in\mathbb{R}_+\backslash\N$. We define
\begin{equation}\label{omega1}
	\bw^\lambda = \{w_l^\lambda\}_{l\in\Z}, \quad w_l^\lambda :=\omega\left(\frac {l}{\lambda}\right)=\phi\left(\frac {|l|}{\lambda}\right), \,\,l\in\mathbb{Z}.
\end{equation}
The parameter $\lambda$ determines how many data values are used in the regression and allows to distribute the weights of the points used in the rank $[-\lambda,\lambda]$.
By the properties of the function $\omega$, if $\lambda_1\leq\lambda_2$, then $w^{\lambda_1}_{l}\leq w^{\lambda_2}_{l}$ for any $l\in\mathbb{Z}$.

Finally, we choose a vector norm, typically $\ell^2$ is taken for its simplicity, but any $\ell^p$-norm can be used depending on the characteristics of the problem. The loss function is defined accordingly: $L_p(s,t) = |s-t|^p$.

With the above elements, we propose these two problems to design the two subdivision rules:
\begin{equation}\label{problema1}
\begin{split}
\hat \bbeta^i=\argmin_{\bbeta \in \mathbb{R}^{d+1}} \sum\{ w^\lambda_{2l-i}L_p(f^k_{j+l},A(2l-i)^T\bbeta) \, : \, l\in\Z, |2l-i| < \lambda \},\quad i=0,1.
\end{split}
\end{equation}

Once the fitted polynomial is obtained, it is evaluated at 0 to define the new data:
\begin{equation}\label{esquemasubdivision}
(S_{d,\mathbf{w^\lambda}} f^k)_{2j+i}=A(0)^T \hat\bbeta^i = (1,0,\ldots,0)\hat\bbeta^i = \hat\beta^i_0 ,\quad i=0,1,
\end{equation}
so that only the first coordinate of $\hat\bbeta^i$ is needed.

\begin{proposition}\label{propnumero}
Moreover, for $d = -1 + 2\left \lfloor \frac{\lambda+1}2 \right \rfloor$, the resulting subdivision scheme is the Deslauriers-Dubuc subdivision scheme.
\end{proposition}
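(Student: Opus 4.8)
The plan is to show that, for this particular value of $d$, the two weighted-least-squares problems in \eqref{problema1} collapse to plain polynomial interpolation problems --- so that neither the weight function $\phi$ nor the exponent of the loss $L_p$ plays any role --- and then to identify the two resulting rules with those of the Deslauriers-Dubuc scheme of the appropriate order \cite{desdub}. Put $m:=\lfloor(\lambda+1)/2\rfloor$, so that $d=2m-1$ and $\dim\Pi_d=2m$. Recall that the Deslauriers-Dubuc scheme of order $2m$ is the interpolatory scheme with rules $f^{k+1}_{2j}=f^k_j$ and $f^{k+1}_{2j+1}=\hat p(0)$, where $\hat p\in\Pi_{2m-1}$ is the unique polynomial interpolating $f^k_{j+l}$ at the abscissas $2l-1$, $l=1-m,\dots,m$ (the $2m$ old samples symmetric about the new point, after translating the evaluation point to the origin).

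First I would carry out the (elementary, and clean because $\lambda\notin\N$) count of the index sets appearing in \eqref{problema1}. For $i=1$ I expect to find $\{l\in\Z:|2l-1|<\lambda\}=\{1-m,\dots,m\}$, exactly $2m=\dim\Pi_d$ indices with pairwise distinct abscissas $2l-1$; for $i=0$ I expect $\{l\in\Z:|2l|<\lambda\}$ always to contain $l=0$ and to have at most $2m$ elements, again with distinct abscissas. The slightly unusual formula for $d$ is precisely what makes these counts come out this way, which is why I regard this step as the heart of the matter.

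Next I would use the elementary observation that the objective in \eqref{problema1} is a sum of nonnegative terms with strictly positive coefficients $w^\lambda_{2l-i}=\phi(|2l-i|/\lambda)$: hence \emph{any} $\bbeta$ that annihilates every residual is automatically a global minimiser --- whatever $\phi$ and $p$ are --- and all such minimisers share the same first coordinate $\hat\beta^i_0=\hat p^i(0)$. For $i=1$, the count above shows that the system $A(2l-1)^T\bbeta=f^k_{j+l}$, $l=1-m,\dots,m$, is square with a nonsingular Vandermonde matrix, so it has a unique solution; that solution is the minimiser, $\hat p^1$ is the centered Lagrange interpolant, and the odd rule of $S_{d,\mathbf{w^\lambda}}$ coincides with the Deslauriers-Dubuc odd rule. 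For $i=0$, the system $A(2l)^T\bbeta=f^k_{j+l}$ has no more equations than unknowns and distinct nodes, so it is solvable; every solution annihilates \eqref{problema1} and is thus a minimiser, and since $l=0$ is among the indices every such $\hat p^0$ satisfies $\hat p^0(0)=f^k_j$, which is the Deslauriers-Dubuc even rule $f^{k+1}_{2j}=f^k_j$. Matching the two rules finishes the proof.

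The step I expect to be the main obstacle is the first one: determining exactly which integers $l$ satisfy $|2l-i|<\lambda$ as $\lambda$ ranges over $\R_+\setminus\N$, and in particular checking that the even-rule index set has at most $\dim\Pi_d$ elements so that its fit is never over-determined and the interpolation argument genuinely applies there. Once that combinatorial bookkeeping is settled, everything else --- zero residual implies global minimiser, Vandermonde nonsingularity, evaluating the interpolant at $0$ --- is routine.
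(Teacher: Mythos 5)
Your strategy is essentially the paper's own: count the data points against $\dim\Pi_d$, observe that any zero-residual fit is a global minimiser of \eqref{problema1} regardless of $\phi$ and of the exponent $p$, identify the odd rule with the square Vandermonde interpolation problem defining the Deslauriers--Dubuc odd rule, and use the fact that $0$ is one of the even-rule nodes to conclude $f^{k+1}_{2j}=f^k_j$. Your odd-rule count is correct: for every $\lambda\in\R_+\setminus\N$ the set $\{l:|2l-1|<\lambda\}$ has exactly $2m$ elements with $m=\lfloor(\lambda+1)/2\rfloor$, so that rule is always the centered interpolatory one.

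However, the step you flagged as the main obstacle genuinely fails for half of the admissible bandwidths. If $2n<\lambda<2n+1$ then $m=n$, but $\{l:|2l|<\lambda\}=\{-n,\dots,n\}$ has $2n+1>2m$ elements, so the even fit is over-determined: a polynomial of degree $d=2m-1=2n-1$ cannot interpolate $2n+1$ generic values, the residuals cannot all be annihilated, and the minimiser is a genuine weighted least-squares fit whose value at $0$ depends on $\phi$. The paper's own data confirm this: for $\lambda=2.5$ (so $d=-1+2\lfloor 1.75\rfloor=1$) the mask listed for {\tt tria} is $[1/7,\,1/2,\,5/7,\,1/2,\,1/7]$, whose even sub-mask $[1/7,\,5/7,\,1/7]$ is not $[0,1,0]$, hence the scheme is not interpolatory and not Deslauriers--Dubuc. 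Your argument therefore only establishes the claim for $\lambda\in(2n-1,2n)$. This is also the implicit restriction in the paper's proof, which identifies the even rule with the DD one only under the condition $d\geq 2\lfloor\lambda/2\rfloor$; with $d=-1+2\lfloor(\lambda+1)/2\rfloor$ that condition holds exactly when $2n-1<\lambda<2n$. To make your write-up correct you should restrict to that range (or state explicitly that the identification of the even rule breaks down on $(2n,2n+1)$); everything else in your proposal is sound.
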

\begin{proof}

Let us discuss when this scheme is well defined. Two situations may occur, depending on whether or not $d$ (the polynomial degree) is smaller than the amount of data $f^{k}_{j+l}$  in the minimization problem \eqref{problema1}.

For $i=0$, if
$d < 2\left \lfloor \frac{\lambda}2 \right \rfloor$
then \eqref{problema1} is a least square problem and there is a unique solution \cite{boyd}, otherwise it can be found a polynomial that interpolates the data. Even if the interpolating polynomial is not unique, its evaluation at 0 is exactly $f^k_j$. Hence, the even rule is well defined for any $\lambda\in\R_+\backslash\N$, coinciding with the even rule of the Deslauriers-Dubuc subdivision scheme for $d \geq 2\left \lfloor \frac{\lambda}2 \right \rfloor$, i.e. $f^{k+1}_{2j}=f^k_j$.

For $i=1$, a least of square problem is solved if
$d+1 < 2\left \lfloor \frac{\lambda+1}2 \right \rfloor$ and an interpolation problem with unique solution is solved when the equality is reached, coinciding with the Deslaurier-Dubuc odd rule in the last case. However, nor the polynomial neither its value at zero are unique when $d +1 > 2\left \lfloor \frac{\lambda+1}2 \right \rfloor$, so that the scheme is not well defined in this case.

As conclusion, only if the polynomial degree is $d = -1 + 2\left \lfloor \frac{\lambda+1}2 \right \rfloor$, the resulting scheme is the Deslauriers-Dubuc interpolatory subdivision scheme, independently the choice of $\omega$ and the loss function $L_p$.
\end{proof}

The scheme \eqref{esquemasubdivision} coincides with the proposed by Dyn et al. in \cite{dynheardhormannsharon} if $p=2$ and $\phi(x)=1$ are used (corresponding to {\tt rect} in Table \ref{tabla1nucleos}). Also, the  non-linear subdivision scheme presented by Mustafa et al. in \cite{mustafa} can be obtained with the same choice of $\phi(x)=1$ but with $p=1$.

We will analyse the properties of our schemes specifically for the polynomial degrees $d=0,1,2,3$, the loss function $L_2$ and several choices of $\phi$.

We will study how the choice of $\phi$ affects the approximation and noise reduction capabilities. We will show that it is not possible to define a $\phi$ giving the best approximation and the greatest denoising. In fact, one may decide how much importance to adjudge to each property and find an equilibrium. This decision may be based on the magnitude of the noise and the smoothness of the underlying function.

Observe that, when $2n-1 < \lambda < 2n$, for some $n\in\N$, the even rule ($i=0$) support is shorter than the odd ($i=1$) one, and just the opposite occurs when $2n < \lambda < 2n+1$. To simplify, we will discuss in detail the first case, where even and odd masks have lengths $2n-1$ and $2n$, respectively, since the second one is analogue and the Remark \ref{rmk:two_situations} can be taken into account for the consequent analysis. 
Nevertheless, we deal with both situations along the paper when it can be do it without additional effort.

To give a more explicit definition of the schemes, we solve the quadratic problem posed in \eqref{problema1} with $p=2$. In this case, it is a weighted least square problem and its solution is well-known.
Let us start with the derivation of the odd sub-mask, $\ba^1$, for $2n-1<\lambda<2n+1$.
For the sake of simplicity, we omit the dependence on $d,\omega,\lambda$ for the following vectors and matrices. If we denote as $\mathbf{W}^1$ the diagonal matrix consisting on the vector
\begin{equation}\label{vectorw1}
\mathbf{w}^1=(w^\lambda_{2n-1},\hdots,w^\lambda_{1},w^\lambda_{1},\hdots,w^\lambda_{2n-1}),
\end{equation}
we call
\begin{equation} \label{eq:x}
\bx^1=
\left(\begin{array}{c}
-2n+1\\
\vdots	 \\
-1\\
1\\
\vdots \\
2n-1\\
\end{array}\right), \quad
\bX^1=\left((\bx^1)^0, (\bx^1)^1, \ldots ,(\bx^1)^d \right)
=
\left(\begin{array}{c}
A(-2n+1)^T\\
\vdots	 \\
A(-1)^T\\
A(1)^T\\
\vdots \\
A(2n-1)^T\\
\end{array}\right)
,
\end{equation}
where the powers $(\bx^1)^t$, $t=0,\ldots,d$, are computed component-wisely, so that $\bX^1$ is a $2n\times (d+1)$ matrix,
and we denote $\bff^{1,j,k}=(f^k_{j-n+1},\hdots,f^k_{j},f^k_{j+1},\hdots,f^k_{j+n})^T$, then the problem of
\eqref{problema1}
can be write as:
\begin{equation}\label{equation111}
\hat \bbeta^1=\argmin_{\bbeta \in \mathbb{R}^{d+1}}||(\mathbf{W}^1)^\frac{1}{2}\bff^{1,j,k}-(\mathbf{W}^1)^\frac{1}{2}\mathbf{X}^1 \bbeta||_2^2,
\end{equation}
whose solution is
\begin{equation}\label{solucion111}
\hat \bbeta^1=((\mathbf{X}^1)^T\mathbf{W}^1\mathbf{X}^1)^{-1}(\mathbf{X}^1)^T\mathbf{W}^1\bff^{1,j,k}.
\end{equation}
For the sake of clarity, we write down the above terms:
\begin{equation} \label{eq:XWX1}
(\mathbf{X}^1)^T\mathbf{W}^1\mathbf{X}^1 =
\left(\begin{array}{llllll}
\sum_{i=-n+1}^{n} w^\lambda_{2i-1} & \sum_{i=-n+1}^{n} w^\lambda_{2i-1} (2i-1) & \cdots & \sum_{i=-n+1}^{n} w^\lambda_{2i-1} (2i-1)^d \\
\sum_{i=-n+1}^{n} w^\lambda_{2i-1} (2i-1) & \sum_{i=-n+1}^{n} w^\lambda_{2i-1} (2i-1)^2 & \cdots & \sum_{i=-n+1}^{n} w^\lambda_{2i-1} (2i-1)^{d+1} \\	
\vdots & \vdots & \vdots & \vdots \\	
\sum_{i=-n+1}^{n} w^\lambda_{2i-1} (2i-1)^d & \sum_{i=-n+1}^{n} w^\lambda_{2i-1} (2i-1)^{d+1} & \cdots & \sum_{i=-n+1}^{n} w^\lambda_{2i-1} (2i-1)^{2d}
\end{array}\right)
\end{equation}
and
\[
(\mathbf{X}^1)^T\mathbf{W}^1\bff^{1,j,k} = (\sum_{i=-n+1}^{n} w^\lambda_{2i-1}f^k_{j+i} , \sum_{i=-n+1}^{n} w^\lambda_{2i-1} (2i-1)f^k_{j+i},  \cdots, \sum_{i=-n+1}^{n} w^\lambda_{2i-1} (2i-1)^d f^k_{j+i})^T.
\]
Since we only need the first coordinate $\hat \beta^1_0$, we can use the Cramer's formula instead of solving the full system:
\[
(S_{d,\mathbf{w^\lambda}} \bff^k)_{2j+1}=\hat \beta^1_0 = \frac{
\left|\begin{array}{llllll}
\sum_{i=-n+1}^{n} w^\lambda_{2i-1}f^k_{j+i} & \sum_{i=-n+1}^{n} w^\lambda_{2i-1} (2i-1) & \cdots & \sum_{i=-n+1}^{n} w^\lambda_{2i-1} (2i-1)^d \\
\sum_{i=-n+1}^{n} w^\lambda_{2i-1} (2i-1)f^k_{j+i} & \sum_{i=-n+1}^{n} w^\lambda_{2i-1} (2i-1)^2 & \cdots & \sum_{i=-n+1}^{n} w^\lambda_{2i-1} (2i-1)^{d+1} \\	
\vdots & \vdots & \ddots & \vdots \\	
\sum_{i=-n+1}^{n} w^\lambda_{2i-1} (2i-1)^d f^k_{j+i} & \sum_{i=-n+1}^{n} w^\lambda_{2i-1} (2i-1)^{d+1} & \cdots & \sum_{i=-n+1}^{n} w^\lambda_{2i-1} (2i-1)^{2d}
\end{array}\right|
}{
|(\mathbf{X}^1)^T\mathbf{W}^1\mathbf{X}^1|}.
\]
Observe that, since the vector $\bw^1$ is symmetric, $w^\lambda_{2i-1} = w^\lambda_{1-2i}$, then $\sum_{i=-n+1}^{n} w^\lambda_{2i-1} (2i-1)^t = 0$ for any odd value of $t$, and $\sum_{i=-n+1}^{n} w^\lambda_{2i-1} (2i-1)^p = 2\sum_{i=1}^{n} w^\lambda_{2i-1} (2i-1)^t$ for the even values.
Thus, the above expressions can be simplified by placing many zeros and by shorting the range of the remaining sums.
Using the linearity of the determinant respect to the first column,
\[
(S_{d,\mathbf{w^\lambda}} \bff^k)_{2j+1} =
	\sum_{l=-n+1}^{n} \frac{w^\lambda_{2l-1}f^k_{j+l}}{|(\mathbf{X}^1)^T\mathbf{W}^1\mathbf{X}^1|}
	\left|\begin{array}{llllll}
		1 & \sum_{i=-n+1}^{n} w^\lambda_{2i-1} (2i-1) & \cdots & \sum_{i=-n+1}^{n} w^\lambda_{2i-1} (2i-1)^d \\
		2l-1 & \sum_{i=-n+1}^{n} w^\lambda_{2i-1} (2i-1)^2 & \cdots & \sum_{i=-n+1}^{n} w^\lambda_{2i-1} (2i-1)^{d+1} \\	
		\vdots & \vdots & \ddots & \vdots \\	
		(2l-1)^d & \sum_{i=-n+1}^{n} w^\lambda_{2i-1} (2i-1)^{d+1} & \cdots & \sum_{i=-n+1}^{n} w^\lambda_{2i-1} (2i-1)^{2d}
	\end{array}\right|
,
\]
we conclude that the sub-masks coefficients are
\[
a^1_l = |(\mathbf{X}^1)^T\mathbf{W}^1\mathbf{X}^1|^{-1}
w^\lambda_{2l-1}
\left|\begin{array}{llllll}
	1 & \sum_{i=-n+1}^{n} w^\lambda_{2i-1} (2i-1) & \cdots & \sum_{i=-n+1}^{n} w^\lambda_{2i-1} (2i-1)^d \\
	2l-1 & \sum_{i=-n+1}^{n} w^\lambda_{2i-1} (2i-1)^2 & \cdots & \sum_{i=-n+1}^{n} w^\lambda_{2i-1} (2i-1)^{d+1} \\	
	\vdots & \vdots & \ddots & \vdots \\	
	(2l-1)^d & \sum_{i=-n+1}^{n} w^\lambda_{2i-1} (2i-1)^{d+1} & \cdots & \sum_{i=-n+1}^{n} w^\lambda_{2i-1} (2i-1)^{2d}
\end{array}\right|.
\]
By \eqref{solucion111}, it can also be expressed as $\ba^1 = (\bbeta^1)^T \mathbf{e}_1 = \mathbf{W}^1\mathbf{X}^1((\mathbf{X}^1)^T\mathbf{W}^1\mathbf{X}^1)^{-1} \mathbf{e}_1$, where $\mathbf{e}_1$ is the first element of the canonical basis of $\R^{d+1}$.

Analogously, for $2n-2<\lambda<2n$, we can prove that $\ba^0 = \mathbf{W}^0\mathbf{X}^0((\mathbf{X}^0)^T\mathbf{W}^0\mathbf{X}^0)^{-1} \mathbf{e}_1$, so that
\[
a^0_l = |(\mathbf{X}^0)^T\mathbf{W}^0\mathbf{X}^0|^{-1}
w^\lambda_{2l}
\left|\begin{array}{llllll}
	1 & \sum_{i=-n+1}^{n-1} w^\lambda_{2i} (2i) & \cdots & \sum_{i=-n+1}^{n-1} w^\lambda_{2i} (2i)^d \\
	2l & \sum_{i=-n+1}^{n-1} w^\lambda_{2i} (2i)^2 & \cdots & \sum_{i=-n+1}^{n-1} w^\lambda_{2i} (2i)^{d+1} \\	
	\vdots & \vdots & \ddots & \vdots \\	
	(2l)^d & \sum_{i=-n+1}^{n-1} w^\lambda_{2i} (2i)^{d+1} & \cdots & \sum_{i=-n+1}^{n-1} w^\lambda_{2i} (2i)^{2d}
\end{array}\right|,
\]
where
$\mathbf{W}^0$ is the diagonal matrix with diagonal
\begin{equation}\label{vectorw0}
\mathbf{w}^0=(w^\lambda_{2n-2},\hdots,w^\lambda_2,1,w^\lambda_2,\hdots,w^\lambda_{2n-2}),
\end{equation}
and
\begin{equation*}
\bx^0=\left( -2(n-1), \ldots, 2,0,2, \ldots, 2(n-1)
\right)^T, \quad
\bX^0=\left((\bx^0)^0, (\bx^0)^1, \ldots ,(\bx^0)^d \right).
\end{equation*}
Collecting these developments, for $2n-1<\lambda<2n$, we can define our weighted local polynomial regression-based subdivision as:
\begin{equation}\label{deffinal}
(S_{d,\mathbf{w^\lambda}} f^k)_{2j+i}= \sum_{l=1-n}^{n-1+i} a^i_l f^k_{j+l}, \quad i=0,1.
\end{equation}

A direct consequence, by construction, is that the scheme reproduces polynomials up to degree $d$.
\begin{proposition} \label{prop:reproduction}
The scheme $S_{d,\mathbf{w^\lambda}}$ reproduces $\Pi_d$.
\end{proposition}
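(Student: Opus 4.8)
The plan is to verify the reproduction identity directly from the definition of the scheme. Fix $p\in\Pi_d$ and take as input the data $f^k_j = p(2j)$, $j\in\Z$. I must then show that the refinement rules \eqref{problema1}--\eqref{esquemasubdivision} return $f^{k+1}_{2j+i}=p(2j+i)$ for $i=0,1$, since this is exactly $S_{d,\mathbf{w^\lambda}}\{p(2j)\}_{j\in\Z}=\{p(j)\}_{j\in\Z}$. The crucial observation is that the polynomial fitted in \eqref{problema1} can be identified a priori: there is a polynomial of degree at most $d$ that interpolates the data exactly, so the weighted least squares residual can be driven to zero.

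Concretely, I would fix $j\in\Z$ and $i\in\{0,1\}$ and introduce the shifted polynomial $q(x):=p(2j+i+x)$, which still belongs to $\Pi_d$ and satisfies $q(0)=p(2j+i)$. A one-line computation gives $q(2l-i)=p\bigl(2(j+l)\bigr)=f^k_{j+l}$ for every index $l$ occurring in the sum \eqref{problema1}, so, writing $q(x)=A(x)^T\bbeta$ for a suitable $\bbeta\in\R^{d+1}$, the objective in \eqref{problema1} evaluated at $\bbeta$ is $0$. Since every summand of that objective is non-negative, $\bbeta$ is a global minimizer, i.e. a solution of the regression problem. Invoking uniqueness of the minimizer I conclude $\hat\bbeta^i=\bbeta$, and then \eqref{esquemasubdivision} yields $(S_{d,\mathbf{w^\lambda}} f^k)_{2j+i}=A(0)^T\hat\bbeta^i=q(0)=p(2j+i)$. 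As $j$ and $i$ were arbitrary, the reproduction of $\Pi_d$ follows.

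The main obstacle is the uniqueness step, which requires that the Vandermonde-type matrix $\bX^i$ built from the nodes $\{2l-i\}$ in \eqref{eq:x} have full column rank $d+1$; equivalently, that these nodes contain at least $d+1$ distinct abscissae. This is precisely the regime in which \eqref{problema1} is a genuine weighted least squares problem, which is the setting in which the mask \eqref{deffinal} is constructed, so there is nothing more to check there. In the complementary interpolatory case one falls back on Proposition \ref{propnumero}: the even rule reproduces $\Pi_d$ trivially because $x=0$ is itself one of the interpolation nodes (so $\hat p(0)=f^k_j=p(2j)$), and whenever the odd rule is well defined it coincides with the Deslauriers--Dubuc interpolatory scheme, which reproduces $\Pi_d$. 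A minor bookkeeping point worth stating explicitly is that $q$ genuinely has degree at most $d$ and that its evaluation at $0$ recovers $p(2j+i)$, both of which are immediate from $q$ being a translate of $p$.
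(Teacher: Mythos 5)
Your proof is correct and is exactly the argument the paper has in mind: the paper states Proposition \ref{prop:reproduction} as ``a direct consequence, by construction'' without writing out details, and your verification (the translate $q(x)=p(2j+i+x)$ makes the weighted residual vanish, hence is the unique minimizer whenever the weighted design matrix has full column rank, so $\hat\bbeta^i$ recovers $q$ and evaluation at $0$ gives $p(2j+i)$) is the standard way to make that explicit. Your handling of the uniqueness/rank condition and of the interpolatory boundary case via Proposition \ref{propnumero} matches the regime in which the paper defines the masks, so there is nothing missing.
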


	\begin{remark}
		Observe that we have considered $\{1,x,\ldots,x^d\}$ as basis of $\Pi_d$, which has led a the linear system with matrix \eqref{eq:XWX1}. It is possible to consider an orthonormal basis of $\Pi_d$ in a way that the matrix is diagonal, leading to a cleaner mathematical description. However, we preferred the basis $\{1,x,\ldots,x^d\}$ because the resulting expression of the subdivision operator is more explicit. A possible benefit of considering an orthonormal basis is that the next results might be more intuitive.
	\end{remark}

Now we prove that $(\bW^i)^{-1}\ba^i$, $i=0,1$, are exactly the evaluations of some polynomial at the grid points $\bx^i$.
\begin{lemma} \label{lem:eval_poly}
For $i=0,1$, the sub-masks are
\begin{equation} \label{eq:eval_poly}
	\ba^i = \mathbf{W}^i \mathbf{X}^i \balpha^i = \left \{w^\lambda_{2j-i}\sum_{t=0}^d \alpha^i_t x^t_j\right \}_{j=1-n}^{L_n+i}
\end{equation}
That is,
the vector \( (\bW^i)^{-1} \ba^i \) coincides with the evaluation of the polynomial $A(x)^T \balpha^i$ at the points $\bx^i$, being \(\balpha^i=((\mathbf{X}^i)^T\mathbf{W}^i\mathbf{X}^i)^{-1}\mathbf{e}_1\), which expression depends on $n,\lambda,\omega$.
\end{lemma}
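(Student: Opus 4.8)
The plan is to read the claim directly off the closed-form solution of the weighted least squares problem that has already been written down, rather than off the Cramer's-rule determinant formulas. First I would combine \eqref{solucion111} with \eqref{esquemasubdivision}: the value produced by the $i$-th refinement rule is
\[
(S_{d,\bw^\lambda}\bff^k)_{2j+i}=\mathbf{e}_1^T\hat\bbeta^i=\mathbf{e}_1^T\bigl((\bX^i)^T\bW^i\bX^i\bigr)^{-1}(\bX^i)^T\bW^i\,\bff^{i,j,k},
\]
whereas, by the definition of the mask in \eqref{deffinal}, the same value equals $\sum_l a^i_l f^k_{j+l}=(\ba^i)^T\bff^{i,j,k}$, once $\ba^i$ and $\bff^{i,j,k}$ are regarded as the vectors supported on the index range appearing in \eqref{deffinal}. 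Since these two expressions agree for every data vector $\bff^{i,j,k}$, the two row vectors must coincide, i.e. $(\ba^i)^T=\mathbf{e}_1^T\bigl((\bX^i)^T\bW^i\bX^i\bigr)^{-1}(\bX^i)^T\bW^i$.

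Second, I would transpose this identity. The matrix $\bW^i$ is diagonal, hence symmetric, so $(\bX^i)^T\bW^i\bX^i$ is a symmetric Gram-type matrix and so is its inverse, which exists precisely in the least squares regime $d<2\lfloor\lambda/2\rfloor$ (resp. $d+1<2\lfloor(\lambda+1)/2\rfloor$) identified in the discussion before Proposition \ref{propnumero}. Transposing then gives
\[
\ba^i=\bW^i\bX^i\bigl((\bX^i)^T\bW^i\bX^i\bigr)^{-1}\mathbf{e}_1=\bW^i\bX^i\balpha^i,\qquad \balpha^i:=\bigl((\bX^i)^T\bW^i\bX^i\bigr)^{-1}\mathbf{e}_1\in\R^{d+1},
\]
which is the first equality in \eqref{eq:eval_poly}. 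It remains to read this product entrywise: by \eqref{eq:x}, the $j$-th row of $\bX^i$ is $A(x_j)^T=(1,x_j,\ldots,x_j^d)$, so the $j$-th entry of $\bX^i\balpha^i$ is $A(x_j)^T\balpha^i=\sum_{t=0}^d\alpha^i_t x_j^t$, that is, the evaluation at the node $x_j$ of the polynomial $A(x)^T\balpha^i$; left-multiplying by the diagonal matrix $\bW^i$, whose $j$-th diagonal entry is $w^\lambda_{2j-i}$ by \eqref{vectorw1}--\eqref{vectorw0}, scales this entry by $w^\lambda_{2j-i}$, yielding exactly the right-hand side of \eqref{eq:eval_poly}.

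I do not expect a genuinely hard step: the whole argument is the standard normal-equations identity for weighted least squares followed by the symmetry of a Gram matrix. The only thing that needs care is organizational bookkeeping — keeping the matrix sizes, the support ranges in \eqref{deffinal}, and the node/weight indexings of \eqref{eq:x} and \eqref{vectorw1}--\eqref{vectorw0} consistent across both situations of Remark \ref{rmk:two_situations} (even rule shorter vs.\ odd rule shorter, $L_n=n-1$ or $L_n=n$), and observing that truncating $\ba^i$ and $\bff^{i,j,k}$ to the support is harmless since the mask vanishes outside it. Alternatively, the same conclusion can be obtained by expanding the determinant in the Cramer's-rule formula for $a^i_l$ along its first column and identifying the resulting cofactors with the components of $\balpha^i$, but the matrix route above is shorter and avoids that computation.
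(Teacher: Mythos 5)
Your proposal is correct and follows essentially the same route as the paper: the paper's proof also starts from the identity $\ba^i = \mathbf{W}^i \mathbf{X}^i ((\mathbf{X}^i)^T\mathbf{W}^i\mathbf{X}^i)^{-1}\mathbf{e}_1$ (established in the computations preceding the lemma via \eqref{solucion111}) and then reads the product $\bX^i\balpha^i$ row by row as evaluations of $A(x)^T\balpha^i$ at the nodes $\bx^i$. Your write-up merely makes explicit the transposition/symmetry-of-the-Gram-matrix step that the paper leaves implicit under ``by the previous computations,'' which is a harmless (indeed helpful) addition.
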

\begin{proof}
By the previous computations,
\[
\ba^i = \mathbf{W}^i \mathbf{X}^i ((\mathbf{X}^i)^T\mathbf{W}^i\mathbf{X}^i)^{-1}\mathbf{e}_1 = \mathbf{W}^i \mathbf{X}^i \balpha^i.
\]
For $\ba^1$ (for $\ba^0$ is analogous), using \eqref{eq:x} we obtain
\[
(\bW^1)^{-1} \ba^1 = \bX^1 \balpha^1 =
\left(\begin{array}{c}
	A(-2n+1)^T\\
	\vdots	 \\
	A(-1)^T\\
	A(1)^T\\
	\vdots \\
	A(2n-1)^T\\
\end{array}\right)\balpha^1
	=
	\left(\begin{array}{c}
		A(-2n+1)^T \balpha^1\\
		\vdots	 \\
		A(-1)^T \balpha^1\\
		A(1)^T \balpha^1\\
		\vdots \\
		A(2n-1)^T \balpha^1\\
	\end{array}\right).
	\]
	That is, the coordinates of \((\bW^1)^{-1} \ba^1\) are the evaluations of the polynomial $A(x)^T \balpha^1 $ at the $\bx^1$ grid points.
\end{proof}

Moreover, these sub-masks are the only ones that lead to polynomial reproduction and verify that $(\bW^i)^{-1}\ba^i$ are polynomial evaluations. This property can be used in practice to easily determine the sub-masks, as we do in Section \ref{sec:d23}.

\begin{theorem} \label{thm:unique}
	The scheme $S_{d,\mathbf{w^\lambda}}$ is the unique scheme that reproduces $\Pi_d$ polynomials and its sub-masks have the form $\ba^i = \mathbf{W}^1 \mathbf{X}^1 \balpha^i$, for some $\balpha^i\in\R^{d+1}$, $i=0,1$.
\end{theorem}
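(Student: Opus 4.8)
The plan is to reduce the reproduction property to a linear system on the sub-masks and to show that, under the structural assumption $\ba^i = \mathbf{W}^i\mathbf{X}^i\balpha^i$, this system pins down $\balpha^i$ uniquely, yielding exactly the vector of Lemma \ref{lem:eval_poly}. Since $S_{d,\mathbf{w^\lambda}}$ does reproduce $\Pi_d$ (Proposition \ref{prop:reproduction}) and its sub-masks are of that form (Lemma \ref{lem:eval_poly}), the existence part is free and only uniqueness has to be argued.

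First I would rewrite ``$S_\ba$ reproduces $\Pi_d$'' as an algebraic identity. Feeding the input $f_{j+l} = p(2(j+l))$, $p\in\Pi_d$, into \eqref{eq:our_schemes} gives $(S_\ba\bff)_{2j+i} = \sum_l a^i_l\,p(2j+2l)$, which must equal $p(2j+i)$ for every $j\in\Z$; as both sides are polynomials in $j$, the identity extends to all real arguments, and the substitution $p(\cdot)\mapsto p(2j+i+\cdot)$ (which preserves $\Pi_d$) turns it into
\begin{equation*}
\sum_{l} a^i_l\, q(2l-i) = q(0), \qquad \forall\, q\in\Pi_d, \quad i=0,1.
\end{equation*}
Since $x^i_l = 2l-i$ are precisely the nodes collected in $\bx^i$ (see \eqref{eq:x}), testing this against $q(t)=A(t)^T\bbeta$ for arbitrary $\bbeta\in\R^{d+1}$ shows it is equivalent to $(\mathbf{X}^i)^T\ba^i = \mathbf{e}_1$, $i=0,1$.

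Next I would plug the assumed form $\ba^i=\mathbf{W}^i\mathbf{X}^i\balpha^i$ into this relation, obtaining $(\mathbf{X}^i)^T\mathbf{W}^i\mathbf{X}^i\,\balpha^i=\mathbf{e}_1$. In the regression regime the weights $w^\lambda_{2l-i}$ are strictly positive (their indices satisfy $|2l-i|<\lambda$) and $\mathbf{X}^i$ has full column rank, being a Vandermonde-type matrix with $d+1$ or more distinct nodes; hence $(\mathbf{X}^i)^T\mathbf{W}^i\mathbf{X}^i$ is symmetric positive definite, in particular invertible. Therefore $\balpha^i=\big((\mathbf{X}^i)^T\mathbf{W}^i\mathbf{X}^i\big)^{-1}\mathbf{e}_1$, the very vector of Lemma \ref{lem:eval_poly}, so $\ba^i$ is forced to coincide with the sub-mask of $S_{d,\mathbf{w^\lambda}}$ and uniqueness follows.

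The one step that requires care is the reduction in the second paragraph: one has to keep the monomial exponent, the node $x^i_l=2l-i$, the weight index $2l-i$ and the support range $l=1-n,\dots,L_n+i$ consistently aligned (the normalization of Remark \ref{rmk:two_situations} handles the $i=0$/$i=1$ asymmetry and the two possible mask lengths). The remaining ingredients — evaluating at a monomial basis and inverting a positive definite Gram matrix — are routine, but it is worth stating explicitly that the argument lives in the genuine weighted-least-squares regime where $(\mathbf{X}^i)^T\mathbf{W}^i\mathbf{X}^i$ is invertible.
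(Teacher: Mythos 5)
Your proposal is correct and follows essentially the same route as the paper: both reduce reproduction of $\Pi_d$ to the moment conditions $(\mathbf{X}^i)^T\ba^i=\mathbf{e}_1$, substitute the assumed form $\ba^i=\mathbf{W}^i\mathbf{X}^i\balpha^i$, and invert the Gram matrix to force $\balpha^i=((\mathbf{X}^i)^T\mathbf{W}^i\mathbf{X}^i)^{-1}\mathbf{e}_1$. Your write-up is merely more explicit on two points the paper leaves implicit — the derivation of the moment conditions from the definition of reproduction, and the positive-definiteness of the Gram matrix.
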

\begin{proof}
	It is a consequence of Lemma \ref{lem:eval_poly} together with Proposition \ref{prop:reproduction}. Suppose that some rule $\hat \ba = \mathbf{W}^i \mathbf{X}^i \hat\balpha$, for some $\hat\balpha\in\R^{d+1}$, fulfils the reproduction conditions for $\Pi_d$. Then
	\[
	\sum_j \hat a_j (x^i_j)^t = \delta_{0,t}, \qquad t=0,1,\ldots,d,  \quad i=0,1,
	\]
	or, written with matrix multiplications,
	\(
	(\bX^i)^T \hat\ba =  \mathbf{e}_1.
	\)
	Then,
	\[
	(\bX^i)^T \hat\ba = (\bX^i)^T \bW^i \bX^1 \hat \balpha = \mathbf{e}_1 \to \hat \balpha = ((\bX^i)^T \bW^i \bX^i)^{-1}\mathbf{e}_1 = \balpha^i.
	\]
\end{proof}

The symmetry of the scheme is another consequence of being based on a polynomial regression problem.
\begin{lemma} \label{lem:symmetry}
	The scheme $S_{d,\mathbf{w^\lambda}}$ is odd-symmetric.
\end{lemma}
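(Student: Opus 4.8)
The plan is to exploit the reflection invariance of the regression problem that defines the scheme: the grid $\bx^i$ is invariant under the sign change $x\mapsto -x$, the weight vector $\bw^i$ is symmetric, and the fitted polynomial is always evaluated at the centre $0$; heuristically, reversing the input data leaves the output unchanged, which is precisely the statement that the mask is palindromic. To make this rigorous I would first invoke Lemma \ref{lem:eval_poly}, which identifies $(\bW^i)^{-1}\ba^i$ with the evaluation on $\bx^i$ of the polynomial $P_i(x):=A(x)^T\balpha^i$, where $\balpha^i=((\bX^i)^T\bW^i\bX^i)^{-1}\mathbf{e}_1$. Thus it suffices to prove two things: (i) $P_i$ is an \emph{even} polynomial, i.e. $\alpha^i_t=0$ for every odd $t$; and (ii) that an even polynomial evaluated on the symmetric grid $\bx^i$, times the symmetric weight $\bw^i$, yields the identities $a^0_l=a^0_{-l}$ and $a^1_l=a^1_{1-l}$, which, recalling $a^i_l=a_{2l-i}$, amount to $a_j=a_{-j}$ for all $j\in\Z$, i.e. odd-symmetry with $j_0=0$.

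The main obstacle is step (i), which I would settle via the parity structure of the Gram matrix $G:=(\bX^i)^T\bW^i\bX^i$. Its $(s,t)$ entry is the moment $M_{s+t-2}$ with $M_r:=\sum_j w^\lambda_{2j-i}(x^i_j)^r$, and since $\bw^i$ is symmetric and $\{x^i_j\}$ is invariant under negation, all odd moments vanish, $M_r=0$ for $r$ odd (this is the same cancellation already used in the determinant computations preceding Lemma \ref{lem:eval_poly}). Hence $G_{s,t}=0$ whenever $s$ and $t$ have opposite parity; permuting the monomial basis so that the even degrees come first makes $G$ block diagonal, and therefore $G^{-1}$ is block diagonal as well (using that $G$ is invertible, being the Gram matrix appearing in \eqref{solucion111}). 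Since $\mathbf{e}_1$ lies in the even-degree block, so does $\balpha^i=G^{-1}\mathbf{e}_1$, which gives $\alpha^i_t=0$ for odd $t$, hence $P_i(-x)=P_i(x)$.

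For step (ii) the bookkeeping is routine. Taking $2n-1<\lambda<2n$ for concreteness, one has $x^1_l=2l-1$ on the support $l=1-n,\dots,n$ and $x^0_l=2l$ on $l=1-n,\dots,n-1$; from $x^1_{1-l}=-x^1_l$ together with $w^\lambda_{2l-1}=w^\lambda_{1-2l}$ we get $a^1_l=w^\lambda_{2l-1}P_1(x^1_l)=w^\lambda_{2l-1}P_1(-x^1_l)=a^1_{1-l}$, and from $x^0_{-l}=-x^0_l$ with $w^\lambda_{2l}=w^\lambda_{-2l}$ we get $a^0_l=a^0_{-l}$; together these are exactly $a_j=a_{-j}$ for all $j\in\Z$. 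The configuration $2n<\lambda<2n+1$ is entirely analogous, since the grids and the weights stay symmetric and Remark \ref{rmk:two_situations} reduces it to the same computation. A slightly shorter alternative would be to apply Theorem \ref{thm:unique}: the reflected mask $\tilde a_j:=a_{-j}$ still reproduces $\Pi_d$ and its sub-masks still have the form $\bW^i\bX^i\tilde\balpha^i$ for suitable $\tilde\balpha^i$, so by uniqueness it must coincide with $S_{d,\mathbf{w^\lambda}}$, forcing $a_j=a_{-j}$.
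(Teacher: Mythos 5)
Your proof is correct, but it takes a genuinely different route from the paper's. The paper proves the identity $a^1_j = a^1_{1-j}$ variationally: it writes $a^1_j = \hat p^{\,j}(0)$, where $\hat p^{\,j}$ minimizes the weighted least-squares functional with Kronecker-delta data, performs the change of summation index $l \mapsto 1-l$ in the objective, and uses uniqueness of the minimizer to conclude $\hat p^{\,1-j}(t) = \hat p^{\,j}(-t)$, hence $a^1_j = a^1_{1-j}$. You instead work directly with the normal equations: the Gram matrix $(\bX^i)^T\bW^i\bX^i$ has entries given by the weighted moments $M_{s+t-2}$, the odd moments vanish by the symmetry of the grid and weights, so the matrix has a checkerboard parity structure, is block-diagonal after reordering the monomials, and therefore $\balpha^i = G^{-1}\mathbf{e}_1$ has vanishing odd-degree components; evenness of the polynomial $A(x)^T\balpha^i$ then combines with $w^\lambda_l = w^\lambda_{-l}$ and the reflection-invariance of $\bx^i$ to give $a_j = a_{-j}$. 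Both arguments are sound and rest on the same invertibility assumption already used in \eqref{solucion111}. What your route buys is that it establishes \eqref{eq:odd_is_zero} (the vanishing of the odd coefficients $\alpha^i_{2t-1}$) as an \emph{independent} fact rather than as a consequence of the symmetry lemma, which is the order the paper uses; this slightly streamlines the path to Proposition \ref{prop:even_odd}. What the paper's variational argument buys is independence from the explicit normal-equations solution: it only needs uniqueness of the minimizer, so it transfers more readily to other strictly convex loss functions. Your closing alternative via Theorem \ref{thm:unique} (the reflected mask still reproduces $\Pi_d$ and still has the form $\bW^i\bX^i\tilde\balpha^i$ with $\tilde\balpha^i = D\balpha^i$, $D$ the diagonal sign matrix, so uniqueness forces $a_j = a_{-j}$) is also valid and arguably the cleanest of the three, though note that in the paper Theorem \ref{thm:unique} is stated just before Lemma \ref{lem:symmetry}, so no circularity arises.
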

\begin{proof}
	We prove that \(a^1_{j} = a^1_{1-j}\) for $2n-1<\lambda<2n+1$ (it can be analogously proven that \(a^0_{j} = a^0_{-j}\) for $2n-2<\lambda<2n$). Let us consider $\bff^j = \{\delta_{j,l} \ : \ l\in\{-n+1,\ldots,n\} \}$.
	The coordinates of the sub-mask $\ba^1$ can be obtained by applying the rule to $\bff^j$, for $j=-n+1,\ldots,n$, and take the first coordinate,
	\[
	a^1_j = \sum_{l=-n+1}^n a^1_{l}\delta_{j,l} = \sum_{l=-n+1}^n a^1_{l}f^j_{l} = (S_{d,\mathbf{w^\lambda}}\bff^j)_1 = \hat{p}^j(0),
	\]
	where, by \eqref{problema1} and \eqref{esquemasubdivision},
	\begin{equation} \label{eq:pj}
		\hat{p}^j=\argmin_{p \in \Pi_{d}(\mathbb{R})} \sum_{l=-n+1}^n w^\lambda_{2l-1}(\delta_{j,l}-p(2l-1))^2.
	\end{equation}
	Then, $a^1_{j} = a^1_{1-j}$ provided that $(S_{d,\mathbf{w^\lambda}}\bff^j)_1 = (S_{d,\mathbf{w^\lambda}}\bff^{1-j})_1 $, or in other words, $\hat p^j(0) = \hat p^{1-j}(0)$.
	Observe that,
	\begin{equation*}
		\begin{split}
			\hat{p}^{1-j}=\argmin_{q \in \Pi_{d}(\mathbb{R})} \sum_{l=-n+1}^n w^\lambda_{2l-1}(\delta_{1-j,l}-q(2l-1))^2,
		\end{split}
	\end{equation*}
	and, performing the change in the summation index $l$ by $1-l$ and using $w^\lambda_{2l-1} = w^\lambda_{1-2l}$ and $\delta_{j,l} = \delta_{1-j,1-l}$,
	\begin{equation}
		\hat{p}^{1-j}=\argmin_{q \in \Pi_{d}(\mathbb{R})} \sum_{l=-n+1}^n w^\lambda_{1-2l}(\delta_{1-j,1-l}-q(1-2l))^2
		=\argmin_{q \in \Pi_{d}(\mathbb{R})} \sum_{l=-n+1}^n w^\lambda_{2l-1}(\delta_{j,l}-q(1-2l))^2. \label{eq:p1j}
	\end{equation}
	Observe the similarity between \eqref{eq:pj} and \eqref{eq:p1j}. Since the minimum is unique, it is reached in \eqref{eq:p1j} by $\hat p^{j}(-t)$. Thus $\hat p^{1-j}(t) = \hat p^{j}(-t)$ and, then, $a^1_j = \hat p^j(0) = \hat p^{1-j}(0) = a^1_{1-j}$.
\end{proof}

By Lemma \ref{lem:eval_poly}, we know that \( (\bW^1)^{-1} \ba^i \) are the evaluations of a polynomial at $\bx^i$. To take profit of the symmetry, let us write as in \eqref{eq:eval_poly}:
\begin{equation} \label{eq:eval_poly2}
(\bW^0)^{-1} \ba^0 = \left\{ \sum_{t=0}^d \alpha^0_t (2l)^t \right\}_{l=-n+1}^{n-1}, \quad (\bW^1)^{-1} \ba^1 = \left\{ \sum_{t=0}^d \alpha^1_t (2l-1)^t \right\}_{l=-n+1}^n.
\end{equation}
Since \(a^0_{i} = a^0_{-i}\), $\forall i=-n+1,\ldots,n-1$, and \(a^1_{i} = a^1_{1-i}\), $\forall i=-n+1,\ldots,n$, and $\omega$ is even, it can be deduced that the polynomials only have even powers. That is
\begin{equation} \label{eq:odd_is_zero}
	\alpha^i_{2t-1} = 0, \quad \forall 1 \leq t \leq (d+1)/2.
\end{equation}

A direct consequence is that the subdivision schemes obtained for any weight function of degree $d$ (even number) coincides with the one for $d+1$, proven in the following lemma.

\begin{proposition} \label{prop:even_odd}
	Let $\omega$ a weight function, $d\in 2\Z_{+}$ and $\lambda\in\R_+\backslash\N$ be such that $d \leq -2 + 2\left \lfloor \frac{\lambda+1}2 \right \rfloor$,
then
	\[
	S_{d,\mathbf{w^\lambda}} = S_{d+1,\mathbf{w^\lambda}}.
	\]
\end{proposition}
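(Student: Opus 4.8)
The plan is to show that $S_{d+1,\mathbf{w^\lambda}}$ satisfies the hypotheses of Theorem~\ref{thm:unique} for polynomial degree $d$, so that the uniqueness statement there forces $S_{d+1,\mathbf{w^\lambda}}=S_{d,\mathbf{w^\lambda}}$. First, the bound $d\le -2+2\left\lfloor\frac{\lambda+1}{2}\right\rfloor$ is exactly the condition from Proposition~\ref{propnumero} under which the degree-$(d+1)$ scheme is well defined, so $S_{d+1,\mathbf{w^\lambda}}$ makes sense; moreover, the grids $\bx^i$ and the data windows entering \eqref{problema1} depend only on $\lambda$ and not on the polynomial degree, so passing from degree $d$ to degree $d+1$ only appends one column to the design matrix, $\mathbf{X}^i_{d+1}=[\,\mathbf{X}^i_{d}\mid(\bx^i)^{d+1}\,]$.

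The key step is a degree drop. By Lemmas~\ref{lem:eval_poly} and \ref{lem:symmetry}, applied with $d$ replaced by $d+1$, the scheme $S_{d+1,\mathbf{w^\lambda}}$ is odd-symmetric and $(\mathbf{W}^i)^{-1}\ba^i$ is the evaluation at $\bx^i$ of the polynomial $A(x)^T\balpha^i$ with $\balpha^i=((\mathbf{X}^i_{d+1})^T\mathbf{W}^i\mathbf{X}^i_{d+1})^{-1}\mathbf{e}_1\in\R^{d+2}$; and, exactly as in the derivation of \eqref{eq:odd_is_zero}, odd-symmetry of the scheme together with evenness of $\omega$ force $A(x)^T\balpha^i$ to contain only even powers of $x$. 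Since $d$ is even, $d+1$ is odd, so an even polynomial of degree at most $d+1$ has degree at most $d$; hence the leading coefficient of $A(x)^T\balpha^i$ vanishes, $A(x)^T\balpha^i\in\Pi_d$, and discarding that trailing zero yields $\bar\balpha^i\in\R^{d+1}$ with $\ba^i=\mathbf{W}^i\mathbf{X}^i_{d+1}\balpha^i=\mathbf{W}^i\mathbf{X}^i_{d}\bar\balpha^i$, which is precisely the structural form required in Theorem~\ref{thm:unique} for degree $d$. As $S_{d+1,\mathbf{w^\lambda}}$ also reproduces $\Pi_{d+1}\supseteq\Pi_d$ by Proposition~\ref{prop:reproduction}, the uniqueness in Theorem~\ref{thm:unique} gives $S_{d+1,\mathbf{w^\lambda}}=S_{d,\mathbf{w^\lambda}}$.

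The main obstacle is the boundary case of the hypothesis, where one of the two rules of the degree-$(d+1)$ scheme is no longer a genuine weighted least-squares problem but an overdetermined interpolation problem, so that $(\mathbf{X}^i)^T\mathbf{W}^i\mathbf{X}^i$ can be singular and Lemma~\ref{lem:eval_poly} does not literally apply; this can happen only for the even rule. There, both the degree-$d$ and the degree-$(d+1)$ even rules reduce to interpolation whose value at the node $0$ equals $f^k_j$ by Proposition~\ref{propnumero}, so they coincide, while the odd rule is still covered by the argument above. Alternatively, one can avoid every case distinction by arguing directly from \eqref{problema1}: split each candidate polynomial and each data window into parts symmetric and antisymmetric about the symmetric grid $\bx^i$; the weighted quadratic objective then decouples into two independent problems, the antisymmetric part of the optimizer is an odd polynomial and hence vanishes at $0$, and its symmetric part is an even polynomial of degree at most $d+1$, i.e.\ of degree at most $d$, so optimizing over $\Pi_{d+1}$ or over $\Pi_d$ yields the same polynomial — hence the same value at $0$, which is what defines the rule.
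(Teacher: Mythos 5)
Your proof follows essentially the same route as the paper's: apply Lemma \ref{lem:eval_poly} to the degree-$(d+1)$ scheme, use the symmetry-induced vanishing of the odd coefficients \eqref{eq:odd_is_zero} to conclude that the leading coefficient is zero (since $d+1$ is odd), and then invoke the uniqueness in Theorem \ref{thm:unique} at degree $d$. Your extra care with the boundary case in which the even rule degenerates into an interpolation problem, and the alternative symmetric/antisymmetric decomposition of \eqref{problema1}, go beyond the paper's brief argument but do not change the underlying approach.
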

\begin{proof}
	The sub-masks of $S_{d+1,\mathbf{w^\lambda}}$ can be written in terms of the evaluation of a $(d+1)$-degree polynomial, according to Lemma \ref{lem:eval_poly}. Since the odd coefficients are zero, then the leading coefficient is zero, for both rules $i=0,1$. Then, both $S_{d,\mathbf{w^\lambda}}$ and $S_{d+1,\mathbf{w^\lambda}}$ fulfils the conditions of Theorem \ref{thm:unique}, for the same polynomial degree $d$, hence they must coincide. 
\end{proof}

Therefore, we can just study the properties of the subdivision schemes based on the space of polynomials $\Pi_d(\mathbb{R})$ with $d$ an even number.

\section{WLPR-Subdivision schemes for $d=0,1$} \label{sec:d01}

In this section we present the WLPR-Subdivision schemes for $d=0,1$ and their properties, by Proposition \ref{prop:even_odd}, we can just consider $d=0$. To simplify the notation, in this section we omit $d$, $\bw$ and $\lambda$ in some variables, such as $S := S_{0,\bw^\lambda}= S_{1,\bw^\lambda}$. In this case, the coefficients of the subdivision schemes are easily obtained from $\omega$ thanks to Lemma \ref{lem:eval_poly}:					
If we denote as
$||\bw^i||_1$ the sum of the components of the vector $\bw^i$ with $i=0,1$, defined in \eqref{vectorw0} and \eqref{vectorw1},
\begin{equation}\label{normaws}
	||\bw^0||_1=1+2\sum_{l=1}^{n-1}w_{2l}^\lambda,\quad ||\bw^1||_1=2\sum_{l=0}^{n-1}w_{2l+1}^\lambda,
\end{equation}
\[
\ba^i = \mathbf{W}^i \mathbf{X}^i \balpha^i = \bw^i \balpha^i, \quad \balpha^i = ((\bX^i)^T \bW^i \bX^i)^{-1}\mathbf{e}_1 = ||\bw^i||_1^{-1},
\]
thus $\ba^i = \bw^i/||\bw^i||_1$.
Another way to obtain $\balpha^i$ is based on Theorem \ref{thm:unique}: Since $\ba^i = \bw^i \balpha^i$ and the scheme must reproduce $\Pi_0$ (constant functions), then $1 = \sum_j a^i_j = \balpha^i \|\bw^i\|_1$ by Lemma \ref{lem:sum1}, thus $\balpha^i= ||\bw^i||_1^{-1}$.

The explicit form of the resulting WLPR-subdivision scheme is, if $2n-1<\lambda<2n$,
\begin{equation}\label{eq:definition_d1_1}
	(S f^k)_{2j}=\sum_{l=1-n}^{n-1} \left(\frac{w^\lambda_{2l}}{||\bw^0||_1}\right) f^k_{j+l},\quad (S f^k)_{2j+1}=
	\sum_{l=1-n}^{n}\left(\frac{w_{2l-1}^\lambda}{||\bw^1||_1}\right)f^k_{j+l},
\end{equation}
and, for $2n<\lambda<2n+1$, it can be written in the following way, in agreement with Remark \ref{rmk:two_situations},
\begin{equation}\label{eq:definition_d1_2}
	(S f^k)_{2j}=
	\sum_{l=1-n}^{n}\left(\frac{w_{2l-1}^\lambda}{||\bw^1||_1}\right)f^k_{j+l},
	\quad
	(S f^k)_{2j+1}=\sum_{l=1-n}^{n+1} \left(\frac{w^\lambda_{2l-2}}{||\bw^0||_1}\right) f^k_{j+l}.
\end{equation}

Note that if $\lambda\in (1,2)$ then $\bw^0=1$ and $\bw^1=(\frac12,\frac12)$, so that the mask for any function $\omega$ of the subdivision scheme is $\ba=[1,2,1]/2$, in other words, the interpolatory Deslauriers-Dubuc scheme \cite{desdub} (as stated in Proposition \ref{propnumero}).
For $\lambda>2$, if $\omega(x)=1$ for $|x|\leq 1$, then the schemes presented by Dyn et al. in \cite{dynheardhormannsharon} are recovered as we mentioned above. These schemes are for $2n-1<\lambda<2n$:
\begin{equation}\label{eqeje}
	(S_{{\tt rect}} f^k)_{2j+1}=
	\sum_{l=1-n}^{n}\frac{f^k_{j+l}}{2n},\quad(S_{\mathbf{\tt rect}} f^k)_{2j}=\sum_{l=-n+1}^{n-1}\frac{f^k_{j+l}}{2n-1}.
\end{equation}
We list some masks for the weight function $\omega(x)=1-|x|$, $|x|\leq 1$, and for several values of $\lambda$:
\begin{equation*}
	\begin{split}
		\ba_{0,\mathbf{tria^{1.5}}}&=[1,2,1]/2,\\
		\ba_{0,\mathbf{tria^{2.5}}}&=[1/7, 1/2, 5/7, 1/2, 1/7],\\
		\ba_{0,\mathbf{tria^{3.5}}}&=[1/12,           3/13,           5/12,           7/13,           5/12,          3/13,           1/12],\\
		\ba_{0,\mathbf{tria^{4.5}}}&=[1/21,           3/20,           5/21,           7/20,           3/7,            7/20,           5/21,           3/20,           1/21],\\
		\ba_{0,\mathbf{tria^{5.5}}}&=[ 1/30,           3/31,           1/6,           7/31,           3/10,          11/31,           3/10, 7/31,           1/6,            3/31,           1/30].
	\end{split}
\end{equation*}
As we can see, all subdivision schemes in this section present a positive mask, since $\omega$ is a positive function. Then, the following result on convergence proved in \cite{gonsor}, (see also \cite{michelliprautzsch,Zhou}) can be applied.
\begin{proposition}{(\cite{gonsor})}\label{propgonsor}
	Let $\ba=\{a_l\}_{l\in\mathbb{Z}}$ be a mask with support $[q,q+k]$, being $q$ and $k$ fixed integers, $k\geq 3$. Suppose that $a_q,a_{q+1},\ldots,a_{q+k-1},a_{q+k}>0$ and
	$\sum_{l\in\Z}a_{2l}=\sum_{l\in\Z}a_{2l+1}=1,$
	then the subdivision scheme converges.
\end{proposition}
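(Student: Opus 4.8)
\noindent The plan is to reduce uniform convergence to the strict contractivity of \emph{some} iterate of the difference scheme, and then to extract that contractivity from the positivity and connectedness of the mask. First, by Lemma~\ref{lem:sum1}, from $\sum_j a^0_j=\sum_j a^1_j=1$ we may write $a(z)=(1+z)q(z)$, so the difference scheme $S_\bq$ is well defined, and $q(1)=\tfrac12 a(1)=1$. It is classical — the natural strengthening of the criterion in Lemma~\ref{lem:sum1}, obtained from the intertwining identity $\Delta S_\ba^{L}=S_\bq^{L}\Delta$ (with $(\Delta\bff)_j:=f_{j+1}-f_j$, up to an index shift) together with a Cauchy argument for the piecewise-linear interpolants of $\{S_\ba^{k}\bff\}_k$ — that $S_\ba$ is uniformly convergent to a continuous limit as soon as $\|S_\bq^{L_0}\|_\infty<1$ for some $L_0\in\N$. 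So it suffices to produce such an $L_0$.

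\noindent Next I would exploit positivity. Since every mask coefficient on the gap-free support $[q,q+k]$ is positive and $\sum_j a^0_j=\sum_j a^1_j=1$, each refined value is a genuine convex combination of level-$0$ data, $(S_\ba^{L}\bff)_m=\sum_j p^{(L)}_{m,j}f_j$ with $p^{(L)}_{m,\cdot}\ge 0$, $\sum_j p^{(L)}_{m,j}=1$; moreover its support lies in an index window of length $\le k+1$ \emph{uniformly in $L$} (the support of the $L$-fold composed mask is $\big[q(2^{L}-1),(q+k)(2^{L}-1)\big]$, of length $k(2^{L}-1)$, which contracts to width $<k$ once the dependence on the $f_j$ is made explicit). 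Using $\Delta S_\ba^{L}=S_\bq^{L}\Delta$ and $\sum_j(p^{(L)}_{m+1,j}-p^{(L)}_{m,j})=0$ (so one may subtract a value of $f$ attained in the common window, of size $\le 2(k+1)$),
\[
\big|(S_\ba^{L}\bff)_{m+1}-(S_\ba^{L}\bff)_m\big|\le 2(k+1)\,\big\|p^{(L)}_{m+1,\cdot}-p^{(L)}_{m,\cdot}\big\|_1\,\|\Delta\bff\|_\infty ,
\]
so that $\|S_\bq^{L}\|_\infty\le 2(k+1)\,\delta_L$, where $\delta_L:=\sup_m\|p^{(L)}_{m+1,\cdot}-p^{(L)}_{m,\cdot}\|_1$.

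\noindent It remains to show $\delta_L\to 0$ as $L\to\infty$, and this is the step I expect to be the main obstacle. The law $p^{(L)}_{m,\cdot}$ is the distribution of the level-$0$ index hit by the inhomogeneous finite-range ``descent'' $m\mapsto\lfloor m/2\rfloor+\ell$ whose transition weights at each level are a sub-mask of $\ba$; because all those weights are strictly positive and, thanks to $k\ge 3$ (so each sub-mask has at least two entries), the descent windows of two neighbouring indices overlap at every level, one can couple the descents started at $m$ and at $m+1$ so that they coalesce with probability $\ge\gamma>0$ over a fixed block of $L_0$ refinement steps; since coalescence probabilities over disjoint blocks multiply, $\delta_L\le 2(1-\gamma)^{\lfloor L/L_0\rfloor}\to 0$ (equivalently, the ``local Dobrushin coefficient'' $\tfrac12\delta_{L_0}$ of one block is $<1$, and such coefficients are submultiplicative under concatenation). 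Choosing $L$ with $2(k+1)\delta_L<1$ and combining with the first two steps completes the proof. The delicate point — the technical heart of the cited result — is exactly the uniform lower bound $\gamma>0$, i.e.\ that finitely many refinements genuinely mix the local averaging laws of adjacent points; this is where both hypotheses (strict positivity of every coefficient and a connected support of length $k+1\ge4$) are essential, as already the mask $(1,1)$, for which $k=1$, fails to converge to a continuous function.
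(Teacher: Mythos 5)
A preliminary remark on the comparison you were asked to survive: the paper does not prove Proposition~\ref{propgonsor} at all; it is imported verbatim from Gonsor \cite{gonsor} and used as a black box, so your attempt is competing with the cited proof (which works with the iterated difference masks and products of nonnegative local matrices) rather than with an in-paper argument. Your route is genuinely different and, in outline, viable: reducing uniform convergence to $\|S_{\bq}^{L_0}\|_\infty<1$ for some iterate is the standard strengthening of Lemma~\ref{lem:sum1}; the observation that each value $(S_\ba^{L}\bff)_m$ is a convex combination of at most $k+1$ consecutive level-$0$ values, uniformly in $L$, is correct; the bound $\|S_\bq^{L}\|_\infty\le 2(k+1)\delta_L$ does follow from $\Delta S_\ba^{L}=S_\bq^{L}\Delta$ because every finitely supported sequence is $\Delta\bff$ for some bounded $\bff$ and the operator norm is attained on such inputs; and the coupling inequality correctly bounds $\|p^{(L)}_{m+1,\cdot}-p^{(L)}_{m,\cdot}\|_1$ by twice a non-coalescence probability.

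The genuine gap is exactly where you flag it, and it is a bit wider than your wording suggests. Your overlap statement concerns only \emph{neighbouring} indices: for $k\ge 3$ the one-step reachable sets of $m$ and $m+1$ intersect, so adjacent descent chains can coalesce immediately with probability at least $(\min_{q\le l\le q+k}a_l)^2$. But the block estimate $\delta_L\le 2(1-\gamma)^{\lfloor L/L_0\rfloor}$ needs a coalescence probability bounded below uniformly over \emph{every} pair of positions that can occur at the start of a later block, and after a failed attempt the two chains are in general no longer adjacent (one step can separate them by roughly $k/2+1$). To close the argument you must prove two quantitative facts that are currently only asserted: (i) the inter-chain distance remains bounded by some $D=D(k)$ for all time, which holds because one descent step sends a pair at distance $d$ to a pair at distance at most $d/2+1+k/2$; and (ii) from any pair at distance at most $D$ there is a coalescing trajectory of boundedly many steps, each step having probability at least $\min_{q\le l\le q+k}a_l>0$ (use the halving of the distance to bring the chains adjacent in $\mathcal{O}(\log D)$ steps, then coalesce), which yields the uniform $\gamma>0$ and $L_0$ depending only on the mask and makes the Dobrushin-type submultiplicativity legitimate. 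Both points are fillable, so the strategy can be completed, but as written the proposal asserts rather than proves its quantitative heart and is therefore an incomplete proof of the cited result, not a complete alternative to it.
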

As a direct consequence, the schemes in this section,
\eqref{eq:definition_d1_1} and \eqref{eq:definition_d1_2}, are convergent because the masks are positive. Observe that the condition $k\geq 3$ in Proposition \ref{propgonsor} requires considering $\lambda>2$.
\begin{corollary}\label{cor1}
	The subdivision scheme $S_{1,\bw^\lambda}$, defined in
\eqref{eq:definition_d1_1} or \eqref{eq:definition_d1_2}, is convergent for any $\lambda\in (1,+\infty)\backslash\N$ and any positive function $\omega$ with support $[-1,1]$.
\end{corollary}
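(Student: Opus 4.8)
The plan is to reduce the whole statement to Proposition \ref{propgonsor}, isolating the small range $\lambda\in(1,2)$ for which that proposition cannot be invoked, since its hypothesis $k\ge 3$ requires a mask support of at least four consecutive nonzero points. So I would split $(1,+\infty)\setminus\N = (1,2)\cup\bigl((2,+\infty)\setminus\N\bigr)$ and treat the two pieces separately.

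For $\lambda\in(1,2)$ one has $n=1$, and substituting $\bw^0=(1)$ and $\bw^1=(w^\lambda_1,w^\lambda_1)$ into \eqref{eq:definition_d1_1} gives $\ba=[1,2,1]/2$ independently of $\omega$, i.e. the interpolatory Deslauriers--Dubuc (linear interpolation) scheme, which is classically convergent; its limit functions are the piecewise-linear interpolants of the data. If one prefers a self-contained argument, the symbol factors as $a(z)=\tfrac12 z^{-1}(1+z)^2$, so by Lemma \ref{lem:sum1} the difference scheme has symbol $q(z)=\tfrac12 z^{-1}(1+z)$, whence $\|S_\bq\|_\infty=\tfrac12<1$ and Lemma \ref{lem:sum1} again gives convergence.

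For $\lambda\in(2,+\infty)\setminus\N$ I would verify the three hypotheses of Proposition \ref{propgonsor} for the full mask $\ba$ read off from \eqref{eq:definition_d1_1}--\eqref{eq:definition_d1_2}. \emph{Positivity:} every nonzero entry of $\ba$ equals $w^\lambda_l/\|\bw^i\|_1$ with $|l|<\lambda$, and $w^\lambda_l=\phi(|l|/\lambda)>0$ because $\omega$ is positive on $(-1,1)$, while $\|\bw^i\|_1>0$ by \eqref{normaws}. \emph{Normalization:} $\sum_j a^0_j=\sum_j a^1_j=1$, immediate from $\ba^i=\bw^i/\|\bw^i\|_1$ (equivalently from Proposition \ref{prop:reproduction} and Lemma \ref{lem:sum1}). \emph{Support:} in the regime $2n-1<\lambda<2n$ (which, together with $\lambda>2$, forces $n\ge 2$) the nonzero entries of $\ba$ sit at the even positions $2-2n,4-2n,\dots,2n-2$ and at the odd positions $1-2n,3-2n,\dots,2n-1$, so $\supp\ba$ is exactly the block $[1-2n,2n-1]$ of $4n-1\ge 7$ consecutive integers; in the regime $2n<\lambda<2n+1$ (where $n\ge 1$) the analogous count via Remark \ref{rmk:two_situations} gives $\supp\ba=[1-2n,2n+1]$, a block of $4n+1\ge 5$ consecutive integers. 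In either case $\supp\ba=[q,q+k]$ with $k\ge 3$, so Proposition \ref{propgonsor} applies and $S_{1,\bw^\lambda}=S_{0,\bw^\lambda}$ converges.

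All the computations here are routine; the one point that must not be overlooked is that $\lambda\in(1,2)$ lies outside the scope of Proposition \ref{propgonsor} and needs the separate (trivial) treatment above. The only mild bookkeeping is checking that $\supp\ba$ is a single block of consecutive integers --- which holds because one sub-mask fills the even positions and the other the odd ones --- and that this block has length at least four as soon as $\lambda>2$.
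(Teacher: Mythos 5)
Your proposal is correct and follows essentially the same route as the paper: convergence for $\lambda>2$ via the positivity result of Proposition \ref{propgonsor} (whose hypothesis $k\geq 3$ is exactly why the paper notes that $\lambda>2$ is required), together with the separate observation that for $\lambda\in(1,2)$ the mask collapses to $[1,2,1]/2$, the convergent Deslauriers--Dubuc/linear-interpolation scheme. Your verification of the contiguous support and of the normalization just makes explicit the bookkeeping that the paper leaves implicit.
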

In Figure \ref{fig:exp2} we show some examples of the limit functions for some weight functions, $\lambda \in \{3.2,3.4,3.6,3.8\}$ and $\bff^0 = \{\delta_{0,l}\}_{l\in\Z}$. The support of all these limit functions is $[-3,3]$ because the mask support length does not vary.
\begin{figure}[h]
	\begin{center}
		\begin{tabular}{ccc}
			\includegraphics[width=5.0cm,height=3.8cm]{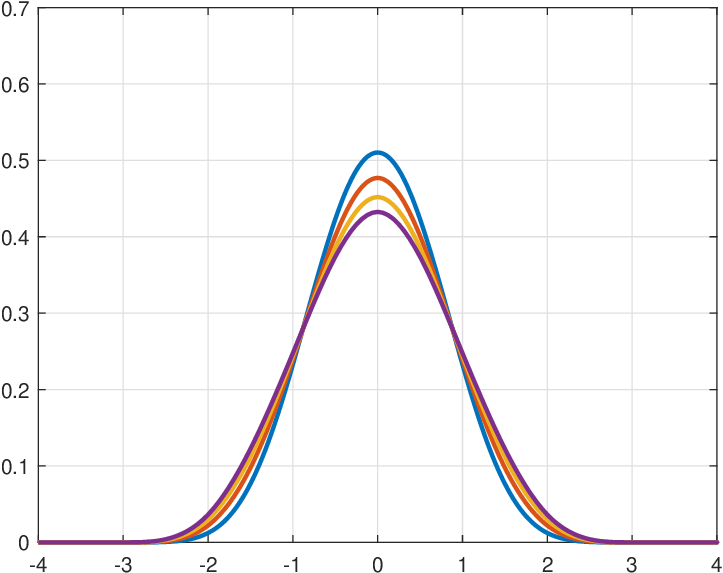} & \includegraphics[width=5.0cm,height=3.8cm]{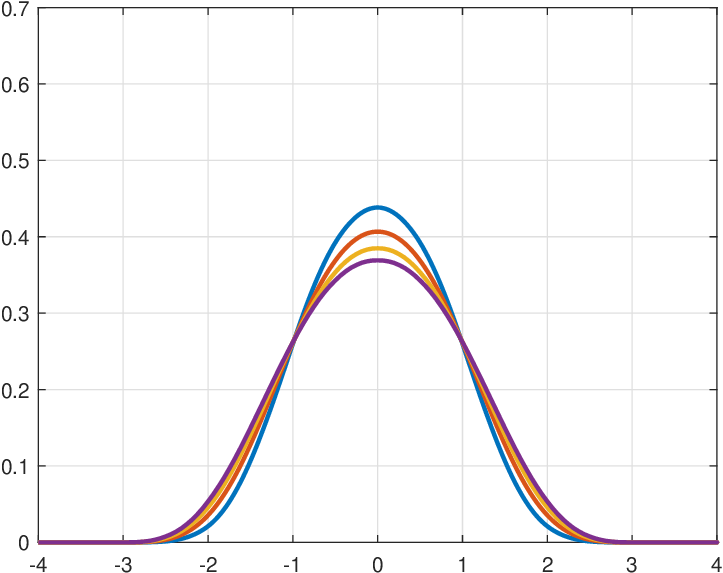} &   \includegraphics[width=5.0cm,height=3.8cm]{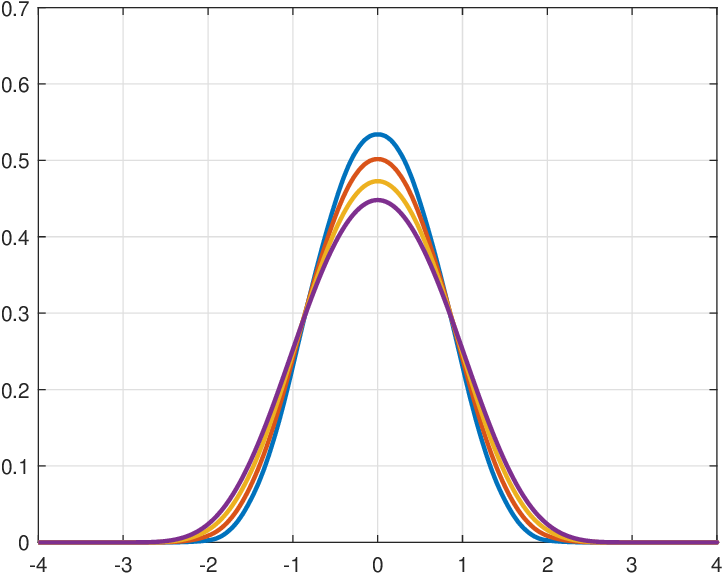}\\
			{\tt tria} & {\tt epan} & {\tt bisq} \\
			\includegraphics[width=5.0cm,height=3.8cm]{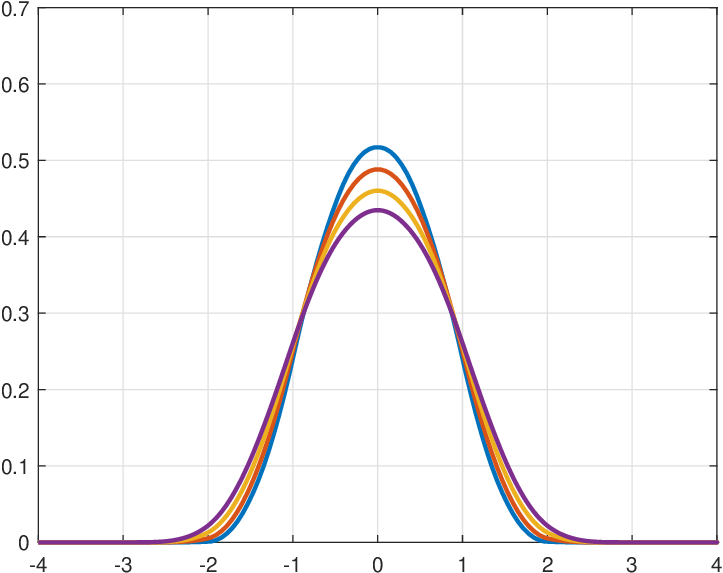}&  \includegraphics[width=5.0cm,height=3.8cm]{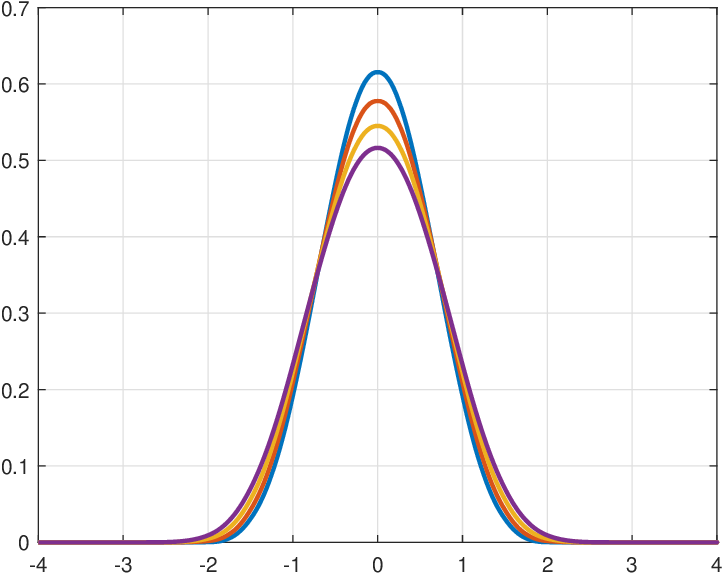}& \includegraphics[width=5.0cm,height=3.8cm]{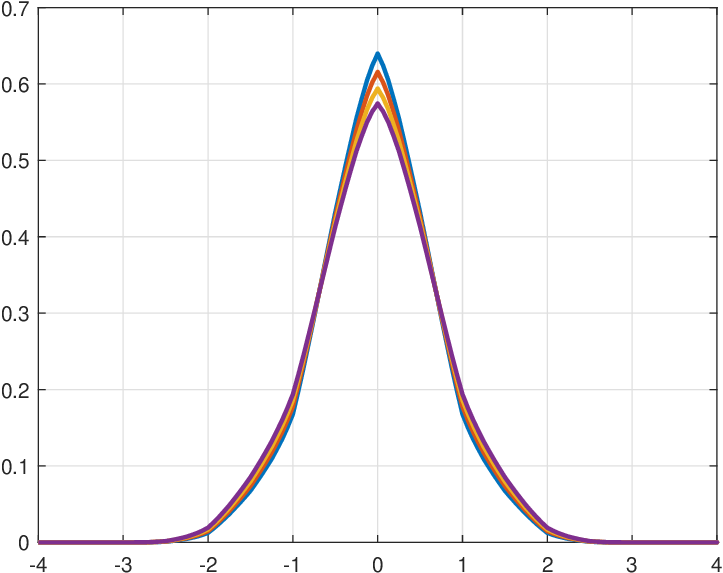}\\
			{\tt tcub} & {\tt trwt} & {\tt exp}(3)\\
		\end{tabular}
	\end{center}
	\caption{Limit functions of the subdivision schemes $S_{1,\bw^\lambda}$ for some weight functions (see Table \ref{tabla1nucleos}) and $\lambda=3.2$ (blue), $\lambda=3.4$ (orange), $\lambda=3.6$ (yellow) and $\lambda=3.8$ (purple).}
	\label{fig:exp2}
\end{figure}

To analyse the smoothness of the limit functions generated by $S$, we consider the Theorem \ref{teoremadyn1}.
In particular, we will prove that the mask of the difference scheme $S_{\mathbf{q}}$ is positive and apply again Proposition \ref{propgonsor}.
Thanks to the odd-symmetry of the scheme, the study can be reduced to a half of its coefficients.
\begin{lemma}\label{lema1}
	Let $n$ be a natural number, $n\geq 2$, $\lambda\in(2n-1,2n)$ and $\omega$ a weight function. The coefficients of the difference scheme $S_\bq$ are positive if
	\begin{equation*}
		\frac{\sum_{l=j_0}^{n-1}w^\lambda_{2l+1}}{\sum_{l=j_0}^{n-1} w^\lambda_{2l}} < \frac{||\bw^1||_1}{||\bw^0||_1}<\frac{\sum_{l=j_1}^{n-1}w^\lambda_{2l+1}}{\sum_{l=j_1+1}^{n-1} w^\lambda_{2l}}, \qquad j_0=1,\hdots,n-1, \quad j_1=1,\hdots,n-2.
	\end{equation*}
\end{lemma}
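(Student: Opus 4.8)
The plan is to turn every coefficient of the difference mask $\bq$ into a partial sum of the weights $w^\lambda_l$ and then read off the positivity requirements one index at a time. Since $\lambda\in(2n-1,2n)$ we are in the first situation of Remark~\ref{rmk:two_situations}, so $L_n=n-1$; and by the computation after \eqref{normaws} the sub-masks are $a^0_l=w^\lambda_{2l}/||\bw^0||_1$ for $l=1-n,\dots,n-1$ and $a^1_l=w^\lambda_{2l-1}/||\bw^1||_1$ for $l=1-n,\dots,n$. Substituting these into the difference-scheme formula \eqref{eq:difference_scheme} expresses $q^0_j$ and $q^1_j$ as weighted partial sums of the $w^\lambda_l$. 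By Lemma~\ref{lem:even_difference} the difference scheme is even-symmetric, so \eqref{eq:q_symmetry} with $L_n=n-1$ gives $q^0_j=q^1_{-j}$; hence it is enough to guarantee $q^0_j>0$ for all $j=1-n,\dots,n-1$, and I will split this range into $j<0$, $j=0$, $0<j<n-1$, and $j=n-1$.

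For $j=-m$ with $1\le m\le n-1$ I will use that $\omega$ is even, i.e. $w^\lambda_{2l}=w^\lambda_{-2l}$ and $w^\lambda_{2l-1}=w^\lambda_{1-2l}$, together with a reflection of the summation index ($l\mapsto -l$ in the even part, $l\mapsto 1-l$ in the odd part), to rewrite $q^0_{-m}=\frac{1}{||\bw^0||_1}\sum_{l=m}^{n-1}w^\lambda_{2l}-\frac{1}{||\bw^1||_1}\sum_{l=m}^{n-1}w^\lambda_{2l+1}$. Strict positivity of this quantity is precisely the left inequality of the statement with $j_0=m$, so the range $j_0=1,\dots,n-1$ is exactly covered.

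For the nonnegative range $0\le j\le n-1$ I will instead replace each partial sum by its complement, using $\sum_{l=1-n}^{n-1}w^\lambda_{2l}=||\bw^0||_1$ and $\sum_{l=1-n}^{n}w^\lambda_{2l-1}=||\bw^1||_1$ (reproduction of constants, Proposition~\ref{prop:reproduction}); after reindexing this gives $q^0_j=\frac{1}{||\bw^1||_1}\sum_{l=j}^{n-1}w^\lambda_{2l+1}-\frac{1}{||\bw^0||_1}\sum_{l=j+1}^{n-1}w^\lambda_{2l}$. The two extreme cases turn out to be unconditional: for $j=n-1$ the second sum is empty, so $q^0_{n-1}=w^\lambda_{2n-1}/||\bw^1||_1>0$; and for $j=0$, since $\sum_{l=0}^{n-1}w^\lambda_{2l+1}=\tfrac12||\bw^1||_1$ and $\sum_{l=1}^{n-1}w^\lambda_{2l}=\tfrac12(||\bw^0||_1-1)$ by \eqref{normaws}, one computes $q^0_0=1/(2||\bw^0||_1)>0$. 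For $1\le j\le n-2$, positivity of $q^0_j$ is equivalent to $\frac{||\bw^1||_1}{||\bw^0||_1}<\frac{\sum_{l=j}^{n-1}w^\lambda_{2l+1}}{\sum_{l=j+1}^{n-1}w^\lambda_{2l}}$, the right inequality with $j_1=j$. Collecting all cases, and recalling $q^1_j=q^0_{-j}$, yields the claimed sufficient condition.

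I do not expect a genuine obstacle: the argument is essentially bookkeeping once the difference mask is written in terms of the weights. The two points that need care are (i) getting the index shifts right in the two reindexings, in particular the interplay between $w^\lambda_{2l}$ and $w^\lambda_{2l+1}$ and the off-by-one in the lower limits, and (ii) observing that the two extreme partial sums ($j=0$ and $j=n-1$) are automatically positive and hence impose no constraint, which is exactly why the index ranges in the statement are $j_0=1,\dots,n-1$ and $j_1=1,\dots,n-2$ rather than anything larger.
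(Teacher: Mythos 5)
Your proposal is correct and follows essentially the same route as the paper's proof: explicit sub-masks $a^i_l = w^\lambda_{2l-i}/\|\bw^i\|_1$, even-symmetry of $S_\bq$ to halve the work, reflection of partial sums using the evenness of $\omega$, identification of the two unconditional coefficients ($q^0_0$ and $q^0_{n-1}=q^1_{1-n}$), and the remaining positivity conditions being exactly the two stated families of inequalities. The only cosmetic difference is that you handle the full index range of $q^0_j$ via the complement identity from constant reproduction, whereas the paper switches to $q^1_j$ for the second half — but since $q^1_j=q^0_{-j}$ these are the same computations.
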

\begin{proof}
	
	By Lemma \ref{lem:even_difference}, $S_\bq$ is even-symmetric. Since $2n-1<\lambda<2n$, then $L_n = n-1$ in \eqref{eq:q_symmetry} and we have
	\begin{equation} \label{eq:q_symmetrydd}
		q^0_j=q^1_{-j}, \qquad j = 1-n, \ldots, n-1.
	\end{equation}
	Then, if $q^0_{j}>0$, for $j=1-n,\ldots,0$, and $q^1_{j}>0$, for $j=1-n,\ldots,-1$, the result is proved.

	First, we check that the coefficients $q^0_0$ and $q^1_{1-n}$ are always positive.
	$$ q^0_0 =\sum_{l=-n+1}^{0} \frac{w^\lambda_{2l}}{||\bw^0||_1} - \frac{w^\lambda_{2l-1}}{||\bw^1||_1}=\sum_{l=-n+1}^{0} \frac{w^\lambda_{2l}}{||\bw^0||_1} - \frac{1}{2}=\frac{1 + \sum_{l=-n+1}^{-1} w^\lambda_{2l}}{1+2\sum_{l=-n+1}^{-1}w^\lambda_{2l}} - \frac{1}{2}>0,$$
	since $w^\lambda_0=1$ and $||\bw^0||_1=1+2\sum_{l=-n+1}^{-1}w^\lambda_{2l}.$
	Analogously,  by
\eqref{eq:difference_scheme}, we have that
	$$q^1_{1-n}=\sum_{l=1-n}^{n-1} \frac{w^\lambda_{2l}}{||\bw^0||_1} - \frac{w^\lambda_{2l+1}}{||\bw^1||_1}=1-\sum_{l=1-n}^{n-1} \frac{w^\lambda_{2l+1}}{||\bw^1||_1}=\frac{w^\lambda_{1-2n}}{||\bw^1||_1}>0.$$

	Now we check $q^0_{j}>0$, for $j=1-n,\ldots,-1$. From
\eqref{eq:difference_scheme}, we have that
	$$	q^0_{j} =
\sum_{l=-n+1}^{j}a^{n,0}_l-a^{n,1}_l=\sum_{l=-n+1}^{j} \frac{w^\lambda_{2l}}{||\bw^0||_1} - \frac{w^\lambda_{2l-1}}{||\bw^1||_1}, \qquad j = 1-n, \ldots, -1,\\
	$$
	then
	\begin{equation}\label{eqlemad1}
		0<q^0_{j}=\sum_{l=-n+1}^{j} \frac{w^\lambda_{2l}}{||\bw^0||_1} - \frac{w^\lambda_{2l-1}}{||\bw^1||_1} \Leftrightarrow \sum_{l=-n+1}^{j}\frac{w^\lambda_{2l-1}}{||\bw^1||_1} < \sum_{l=-n+1}^{j} \frac{w^\lambda_{2l}}{||\bw^0||_1} \Leftrightarrow \frac{\sum_{l=-n+1}^{j}w^\lambda_{2l-1}}{\sum_{l=-n+1}^{j} w^\lambda_{2l}} < \frac{||\bw^1||_1}{||\bw^0||_1}.
	\end{equation}
	As $w^\lambda_l=w^\lambda_{-l}$ for all $l\in\mathbb{Z}$, we have that, if $j=1-n,\hdots, -1$,
	\begin{equation}\label{eqparafinal}
		\sum_{l=-n+1}^{j}w^\lambda_{2l-1}=\sum_{l=-n+1}^{j}w^\lambda_{1-2l}=\sum_{l=-j}^{n-1}w^\lambda_{2l+1},\quad \sum_{l=-n+1}^{j}w^\lambda_{2l}=\sum_{l=-n+1}^{j}w^\lambda_{-2l}=\sum_{l=-j}^{n-1}w^\lambda_{2l}.
	\end{equation}
	Therefore, by \eqref{eqlemad1} we obtain:
	\begin{equation}\label{parafinal1}
		0<q^0_{j} \Leftrightarrow \frac{\sum_{l=j}^{n-1}w^\lambda_{2l+1}}{\sum_{l=j}^{n-1} w^\lambda_{2l}} < \frac{||\bw^1||_1}{||\bw^0||_1}, \quad j=1,\hdots,n-1.
	\end{equation}
		
	Now we check $q^1_{j}>0$, for $j=2-n,\ldots,-1$. By \eqref{eq:difference_scheme}:
	\begin{equation}
		\begin{split}
			q^1_{j} &
			= \sum_{l=j}^{n-1} a^{n,0}_{l} - a^{n,1}_{l+1} = \sum_{l=j}^{n-1}  \frac{w^\lambda_{2l}}{||\bw^0||_1} - \sum_{l=j}^{n-1}\frac{w^\lambda_{2l+1}}{||\bw^1||_1} \qquad \\
			&=1-\sum_{l=1-n}^{j-1}  \frac{w^\lambda_{2l}}{||\bw^0||_1}-\left(1-\sum_{l=-n}^{j-1}\frac{w^\lambda_{2l+1}}{||\bw^1||_1}\right)\\
			&=\sum_{l=-n}^{j-1}\frac{w^\lambda_{2l+1}}{||\bw^1||_1} -\sum_{l=1-n}^{j-1}  \frac{w^\lambda_{2l}}{||\bw^0||_1}.
		\end{split}
	\end{equation}
	Following the same reasoning:
	\begin{equation}\label{eqlemad2}
		0<q^1_{j}=\sum_{l=-n}^{j-1}\frac{w^\lambda_{2l+1}}{||\bw^1||_1} -\sum_{l=1-n}^{j-1}  \frac{w^\lambda_{2l}}{||\bw^0||_1} \Leftrightarrow \sum_{l=1-n}^{j-1}  \frac{w^\lambda_{2l}}{||\bw^0||_1} < \sum_{l=-n}^{j-1}\frac{w^\lambda_{2l+1}}{||\bw^1||_1}  \Leftrightarrow  \frac{||\bw^1||_1}{||\bw^0||_1}< \frac{\sum_{l=-n}^{j-1}w^\lambda_{2l+1}}{ \sum_{l=1-n}^{j-1}  w^\lambda_{2l}}.
	\end{equation}
	Again, by \eqref{eqparafinal}, we have
	\begin{equation}\label{parafinal2}
		0<q^1_{j}\Leftrightarrow\frac{||\bw^1||_1}{||\bw^0||_1}<
		\frac{\sum_{l=1-n}^{j}w^\lambda_{2l-1}}{ \sum_{l=1-n}^{j-1}  w^\lambda_{2l}}=\frac{\sum_{l=-j}^{n-1}w^\lambda_{2l+1}}{\sum_{l=1-j}^{n-1} w^\lambda_{2l}}, \quad j=2-n, \ldots, -1.
	\end{equation}
Collecting conditions \eqref{parafinal1} and \eqref{parafinal2}, we get the result:
	\begin{equation*}
		\frac{\sum_{l=j_0}^{n-1}w^\lambda_{2l+1}}{\sum_{l=j_0}^{n-1} w^\lambda_{2l}} < \frac{||\bw^1||_1}{||\bw^0||_1}<\frac{\sum_{l=j_1}^{n-1}w^\lambda_{2l+1}}{\sum_{l=j_1+1}^{n-1} w^\lambda_{2l}},
	\end{equation*}
	with $j_0=1,\hdots,n-1$ and $j_1=1,\hdots,n-2$.
\end{proof}

\begin{lemma}
	Let $n\in\N$, $n\geq 2$, $\lambda\in (2n-1,2n)$, and $\omega$ a weight function be. Let us consider
	\begin{equation}
		p^{\omega^\lambda}_0:[0,n-1]\to \mathbb{R}, \quad p^{\omega^\lambda}_0(l):=\frac{\phi\left(\frac {2l+1}{\lambda}\right)}{\phi\left(\frac {2l}{\lambda}\right)},
	\end{equation}
so that $p^{\omega^\lambda}_0(l)=\frac{w^\lambda_{2l+1}}{w^\lambda_{2l}}$ for $l=0,1,\ldots,n-1$.
	If $p^{\omega^\lambda}_0$ is a decreasing function, then the coefficients of the difference scheme are positive.
\end{lemma}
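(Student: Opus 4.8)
The plan is to verify directly the sufficient condition of Lemma \ref{lema1}: since $2n-1<\lambda<2n$ we have $L_n=n-1$, so it is enough to prove that for $j_0=1,\dots,n-1$ and $j_1=1,\dots,n-2$,
\[
\frac{\sum_{l=j_0}^{n-1}w^\lambda_{2l+1}}{\sum_{l=j_0}^{n-1}w^\lambda_{2l}}<\frac{\|\bw^1\|_1}{\|\bw^0\|_1}<\frac{\sum_{l=j_1}^{n-1}w^\lambda_{2l+1}}{\sum_{l=j_1+1}^{n-1}w^\lambda_{2l}}.
\]
Throughout I would abbreviate $v_l:=w^\lambda_{2l}$, $u_l:=w^\lambda_{2l+1}$, so that $p^{\omega^\lambda}_0(l)=u_l/v_l$, and record the elementary facts $v_0=1$, $v_n=0$ (since $\lambda<2n$), and $v_l\ge u_l\ge v_{l+1}$ (since $\phi$ is decreasing). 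The hypothesis that $p^{\omega^\lambda}_0$ is decreasing on $[0,n-1]$ enters in two ways: on $\Z$ it gives that $u_l/v_l$ is non-increasing; evaluated on the grid $\tfrac12\Z$ it gives that $v_{l+1}/u_l$ is non-increasing, which together with the previous fact means the interlaced sequence $v_0,u_0,v_1,u_1,\dots$ — and hence $(v_l)_l$ — is log-concave.

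For the left inequality I would use the \emph{monotone mediant} principle: if $a_l,b_l>0$ and $a_l/b_l$ is non-increasing, then $k\mapsto\bigl(\sum_{l\ge k}a_l\bigr)\big/\bigl(\sum_{l\ge k}b_l\bigr)$ is non-increasing, because passing from $k$ to $k+1$ discards the term with the largest ratio and replaces a weighted mediant by one of its smaller constituents. Applying this with $a_l=u_l$, $b_l=v_l$ gives $\sum_{l=j_0}^{n-1}u_l\big/\sum_{l=j_0}^{n-1}v_l\le\sum_{l=0}^{n-1}u_l\big/\sum_{l=0}^{n-1}v_l$ for $j_0\ge1$. By \eqref{normaws}, $\|\bw^1\|_1=2\sum_{l=0}^{n-1}u_l$ while $\|\bw^0\|_1=2\sum_{l=0}^{n-1}v_l-v_0<2\sum_{l=0}^{n-1}v_l$, so $\|\bw^1\|_1/\|\bw^0\|_1>\sum_{l=0}^{n-1}u_l\big/\sum_{l=0}^{n-1}v_l$, and the left inequality follows at once.

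For the right inequality I would first rewrite $\sum_{l=j_1+1}^{n-1}v_l=\sum_{l=j_1}^{n-1}v_{l+1}$ (the missing term $v_n$ is zero), so its right-hand side equals $\sum_{l\ge j_1}u_l\big/\sum_{l\ge j_1}v_{l+1}$; now $u_l/v_{l+1}=1/p^{\omega^\lambda}_0(l+\tfrac12)$ is non-decreasing, so by the monotone mediant principle (this time discarding the \emph{smallest}-ratio term raises the mean) this ratio is non-decreasing in $j_1$, and it suffices to treat $j_1=1$. Clearing denominators there, using $\|\bw^1\|_1=2u_0+2\sum_{l\ge1}u_l$ and $\|\bw^0\|_1=1+2v_1+2\sum_{l\ge2}v_l$, the whole right chain collapses to the single inequality
\[
A\,(1+2v_1)>2u_0\,B,\qquad A:=\sum_{l=1}^{n-1}u_l,\quad B:=\sum_{l=2}^{n-1}v_l .
\]

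This last inequality is the crux. Using $u_l\ge v_{l+1}$ together with $v_n=0$ and $u_{n-1}>0$ one gets $A\ge u_1-v_2+B+u_{n-1}$, which reduces the claim to $(u_1-v_2+u_{n-1})(1+2v_1)>B(2u_0-1-2v_1)$. If $2u_0\le1+2v_1$ the right side is $\le0$ and the (positive) left side wins. Otherwise $u_0>\tfrac12+v_1$, which forces $v_1<\tfrac12$; then log-concavity of $(v_l)_l$ makes $v_{l+1}/v_l$ non-increasing and $\le v_1<\tfrac12$, so $B<v_2/(1-v_1)<2v_2$, while $2u_0-1-2v_1\le1-2v_1$ because $u_0\le1$; and $p^{\omega^\lambda}_0(\tfrac32)\le p^{\omega^\lambda}_0(\tfrac12)$ gives $u_1v_1\ge u_0v_2$, hence $u_1-v_2\ge v_2(u_0/v_1-1)>v_2/(2v_1)\ge v_2$, so $(u_1-v_2)(1+2v_1)>v_2/(2v_1)+v_2>2v_2>2v_2(1-2v_1)\ge B(2u_0-1-2v_1)$. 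In either case the inequality holds strictly, so both chains of Lemma \ref{lema1} are satisfied and all coefficients of $S_\bq$ are positive. The only genuinely delicate point is this sub-case $v_1<\tfrac12$: it is where the pointwise monotonicity of $p^{\omega^\lambda}_0$ must be turned into the log-concavity estimate $B<2v_2$ and the cross-inequality $u_1v_1\ge u_0v_2$; the remainder is routine mediant bookkeeping.
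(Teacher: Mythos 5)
Your proof is correct and its skeleton is the paper's: both verify the two chains of sufficient conditions in Lemma \ref{lema1} by means of the mediant property \eqref{propiedadbasica}, applied to the decreasing ratios $w^\lambda_{2l+1}/w^\lambda_{2l}=p^{\omega^\lambda}_0(l)$ for the left chain and to the increasing ratios $w^\lambda_{2l+1}/w^\lambda_{2l+2}=1/p^{\omega^\lambda}_0(l+\tfrac12)$ for the right one; your left chain and your reduction of the right chain to $j_1=1$ are exactly the paper's steps. Where you diverge is the endgame of the right chain. The paper finishes in one line: one further mediant step gives $\frac{\sum_{l=0}^{n-1}w^\lambda_{2l+1}}{\sum_{l=1}^{n-1}w^\lambda_{2l}}\le\frac{\sum_{l=1}^{n-1}w^\lambda_{2l+1}}{\sum_{l=2}^{n-1}w^\lambda_{2l}}$, and the comparison with $\|\bw^1\|_1/\|\bw^0\|_1$ is then immediate because its denominator carries an extra $\tfrac12$. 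In your notation that single step is $Av_1\ge u_0B$ (each ratio $u_l/v_{l+1}$ with $l\ge1$ dominates $u_0/v_1$), which yields your crux instantly: $A(1+2v_1)\ge A+2u_0B>2u_0B$. Your two-case proof of $A(1+2v_1)>2u_0B$ --- via log-concavity of $(v_l)$, the geometric bound $B<2v_2$ and the cross inequality $u_1v_1\ge u_0v_2$ --- is valid, but it is substantially heavier and buys nothing extra; it is the same monotonicity hypothesis entering through a longer detour. Two blemishes, neither fatal: the estimate $A\ge u_1-v_2+B+u_{n-1}$ double counts $u_1=u_{n-1}$ when $n=2$ (and there $B<2v_2$ reads $0<0$), but for $n=2$ the index range $j_1=1,\ldots,n-2$ is empty, so the right chain is vacuous and nothing is lost; and several strict links (division by $v_1$, $B<2v_2$, $u_1-v_2>v_2/(2v_1)$) tacitly assume that the weights at the nodes actually used are strictly positive, which is the same implicit assumption the paper makes (cf.\ Corollary \ref{cor1}) but is worth stating explicitly.
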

\begin{proof}
	Note that:
	$$\frac{||\bw^1||_1}{||\bw^0||_1}=\frac{2\sum_{l=0}^{n-1}w^\lambda_{2l+1}}{1+2\sum_{l=0}^{n-1} w^\lambda_{2l}}=\frac{\sum_{l=0}^{n-1}w^\lambda_{2l+1}}{\frac12+\sum_{l=0}^{n-1} w^\lambda_{2l}}$$
	Consider this basic property: For any $a,b,c,d>0$,
	\begin{equation}\label{propiedadbasica}
		\frac ab\leq\frac cd \Rightarrow \frac ab\leq \frac{a+c}{b+d}\leq \frac cd.
	\end{equation}
	Firstly, since $p^{\omega^\lambda}_0$ is decreasing, we get by \eqref{propiedadbasica}:
	$$\frac{w^\lambda_{2n-1}}{w^\lambda_{2n-2}}=p^{\omega^\lambda}_0(n-1)\leq p^{\omega^\lambda}_0(n-2)=\frac{w^\lambda_{2n-3}}{w^\lambda_{2n-4}}\Rightarrow \frac{w^\lambda_{2n-1}}{w^\lambda_{2n-2}}\leq \frac{w^\lambda_{2n-1}+w^\lambda_{2n-3}}{w^\lambda_{2n-2}+w^\lambda_{2n-4}}\leq \frac{w^\lambda_{2n-3}}{w^\lambda_{2n-4}}.$$
	And, again using the monotony of function $p^{\omega^\lambda}_0$ and \eqref{propiedadbasica}:
	$$\frac{w^\lambda_{2n-1}+w^\lambda_{2n-3}}{w^\lambda_{2n-2}+w^\lambda_{2n-4}}\leq \frac{w^\lambda_{2n-3}}{w^\lambda_{2n-4}}\leq \frac{w^\lambda_{2n-5}}{w^\lambda_{2n-6}}\Rightarrow
	\frac{w^\lambda_{2n-1}+w^\lambda_{2n-3}}{w^\lambda_{2n-2}+w^\lambda_{2n-4}}\leq \frac{w^\lambda_{2n-1}+w^\lambda_{2n-3}+w^\lambda_{2n-5}}{w^\lambda_{2n-2}+w^\lambda_{2n-4}+w^\lambda_{2n-6}}
	\leq \frac{w^\lambda_{2n-5}}{w^\lambda_{2n-6}}
	$$
	Repeating this process, we get by \eqref{propiedadbasica}:
	\begin{equation}
		\begin{split}
			&\frac{w^\lambda_{2n-1}}{w^\lambda_{2n-2}}\leq \frac{w^\lambda_{2n-1}+w^\lambda_{2n-3}}{w^\lambda_{2n-2}+w^\lambda_{2n-4}}\leq \hdots\leq
			\frac{\sum_{l=1}^{n-1}w^\lambda_{2l+1}}{\sum_{l=1}^{n-1}w^\lambda_{2l}}\leq \frac{w^\lambda_{1}}{w^\lambda_{0}}\Rightarrow\\
			&\frac{w^\lambda_{2n-1}}{w^\lambda_{2n-2}}\leq \frac{w^\lambda_{2n-1}+w^\lambda_{2n-3}}{w^\lambda_{2n-2}+w^\lambda_{2n-4}}\leq \hdots
			\leq \frac{\sum_{l=0}^{n-1}w^\lambda_{2l+1}}{w^\lambda_0+\sum_{l=1}^{n-1}w^\lambda_{2l}} <
			\frac{\sum_{l=0}^{n-1}w^\lambda_{2l+1}}{\frac{1}{2}+\sum_{l=1}^{n-1}w^\lambda_{2l}}=\frac{||\bw^1||_1}{||\bw^0||_1}.
		\end{split}
	\end{equation}
	Secondly, we define $p^{\omega^\lambda}_1:[0,n-2]\to \mathbb{R}$ as
\[
	p_1^{\omega^\lambda}(l)= 1/p_0^{\omega^\lambda}(l+1/2), \quad \forall l\in[0,n-2],
	\]
which is an increasing function since $p^{\omega^\lambda}_0$ is decreasing. We have that
	$$\frac{w^\lambda_{2n-5}}{w^\lambda_{2n-4}}=p^{\omega^\lambda}_1(n-3)\leq p^{\omega^\lambda}_1(n-2)=\frac{w^\lambda_{2n-3}}{w^\lambda_{2n-2}}<\frac{w^\lambda_{2n-3}+w^\lambda_{2n-1}}{w^\lambda_{2n-2}}\Rightarrow \frac{w^\lambda_{2n-5}}{w^\lambda_{2n-4}}<\frac{w^\lambda_{2n-1}+w^\lambda_{2n-3}+w^\lambda_{2n-5}}{w^\lambda_{2n-2}+w^\lambda_{2n-4}}<\frac{w^\lambda_{2n-3}+w^\lambda_{2n-1}}{w^\lambda_{2n-2}}.$$
	Again, using the same strategy, we get:
	\begin{equation*}
		\begin{split}
			&\frac{w^\lambda_{1}}{w^\lambda_{2}}<\frac{\sum_{l=1}^{n-1}w^\lambda_{2l+1}}{\sum_{l=2}^{n-1}w^\lambda_{2l}}<\hdots<\frac{w^\lambda_{2n-3}+w^\lambda_{2n-1}}{w^\lambda_{2n-2}}\Rightarrow
			\frac{\sum_{l=0}^{n-1}w^\lambda_{2l+1}}{\sum_{l=1}^{n-1}w^\lambda_{2l}}<\frac{\sum_{l=1}^{n-1}w^\lambda_{2l+1}}{\sum_{l=2}^{n-1}w^\lambda_{2l}}<\hdots<\frac{w^\lambda_{2n-3}+w^\lambda_{2n-1}}{w^\lambda_{2n-2}}\Rightarrow\\
			&\frac{||\bw^1||_1}{||\bw^0||_1}=\frac{\sum_{l=0}^{n-1}w^\lambda_{2l+1}}{\frac12+\sum_{l=1}^{n-1}w^\lambda_{2l}}<\frac{\sum_{l=0}^{n-1}w^\lambda_{2l+1}}{\sum_{l=1}^{n-1}w^\lambda_{2l}}<\frac{\sum_{l=1}^{n-1}w^\lambda_{2l+1}}{\sum_{l=2}^{n-1}w^\lambda_{2l}}<\hdots<\frac{w^\lambda_{2n-3}+w^\lambda_{2n-1}}{w^\lambda_{2n-2}}.\\
		\end{split}
	\end{equation*}
	Then, by Lemma \ref{lema1}, we conclude that the coefficients of the difference scheme, \eqref{eq:difference_scheme}, are positive.
\end{proof}

Next lemma allows to easily check the monotonicity of $p^{\omega^\lambda}_0$.
\begin{lemma}
	Let $n\geq 2$ and $\lambda\in(2n-1,2n)$ be.
	If $\phi:[0,1]\to [0,1]$ is continuous and differentiable in $(0,1)$ and the quotient function $\phi'/\phi$ is decreasing, then $p_0^{\omega^\lambda}$ is decreasing.
\end{lemma}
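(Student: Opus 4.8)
The plan is to take logarithms and recognise the hypothesis on $\phi'/\phi$ as the concavity of $\log\phi$. First I would record that, for $l\in[0,n-1]$, every argument appearing in
\[
p_0^{\omega^\lambda}(l)=\frac{\phi\big(\tfrac{2l+1}{\lambda}\big)}{\phi\big(\tfrac{2l}{\lambda}\big)}
\]
lies in $[0,1)$: since $\lambda\in(2n-1,2n)$ we have $\tfrac{2l}{\lambda}\le\tfrac{2n-2}{\lambda}<1$ and $\tfrac{2l+1}{\lambda}\le\tfrac{2n-1}{\lambda}<1$, and all these arguments are strictly positive except $\tfrac{2\cdot 0}{\lambda}=0$. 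Moreover, since $\phi'/\phi$ is defined and decreasing on $(0,1)$, $\phi$ cannot vanish there, and together with $\phi(0)=1$ and continuity this forces $\phi>0$ on $[0,1)$; hence $p_0^{\omega^\lambda}$ is well defined and positive, so it suffices to prove that $\log p_0^{\omega^\lambda}$ is decreasing.

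Next I would set $g:=\log\phi$, which is continuous on $[0,1)$ and differentiable on $(0,1)$ with $g'=\phi'/\phi$ decreasing, i.e.\ $g$ concave on $(0,1)$. For $t\in[0,n-1]$, write
\[
h(t):=\log p_0^{\omega^\lambda}(t)=g\Big(\tfrac{2t+1}{\lambda}\Big)-g\Big(\tfrac{2t}{\lambda}\Big),
\]
which is continuous on $[0,n-1]$ and differentiable on $(0,n-1)$, the latter because for $t\in(0,n-1)$ both $\tfrac{2t+1}{\lambda}$ and $\tfrac{2t}{\lambda}$ lie in the open interval $(0,1)$. Differentiating and using $\tfrac{2t+1}{\lambda}>\tfrac{2t}{\lambda}$ together with the monotonicity of $g'$,
\[
h'(t)=\frac{2}{\lambda}\Big(g'\big(\tfrac{2t+1}{\lambda}\big)-g'\big(\tfrac{2t}{\lambda}\big)\Big)\le 0 .
\]
Hence $h$ is decreasing on $(0,n-1)$, and by continuity also on the whole closed interval $[0,n-1]$; since $p_0^{\omega^\lambda}=e^{h}$ and the exponential is increasing, $p_0^{\omega^\lambda}$ is decreasing on $[0,n-1]$, as claimed.

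The only mildly delicate point is the left endpoint $t=0$, where $\tfrac{2t}{\lambda}=0$ sits on the boundary of the interval on which $\phi$ is assumed differentiable; this is harmless because $h$ is still continuous at $0$ (as $\phi(0)=1$), so the monotonicity of $h$ on $(0,n-1)$ propagates to $[0,n-1]$. Everything else reduces to the elementary bookkeeping that $\lambda\in(2n-1,2n)$ keeps every relevant argument strictly inside $(0,1)$ for $t>0$, so that $g=\log\phi$ and $g'=\phi'/\phi$ are available exactly where they are used.
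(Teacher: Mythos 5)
Your proof is correct and follows essentially the same route as the paper: the paper differentiates $p_0^{\omega^\lambda}$ directly via the quotient rule and reduces the sign condition to $\tfrac{\phi'((2l+1)/\lambda)}{\phi((2l+1)/\lambda)} < \tfrac{\phi'(2l/\lambda)}{\phi(2l/\lambda)}$, which is exactly the inequality $h'(t)\le 0$ you obtain after taking logarithms. Your additional bookkeeping (positivity of $\phi$ on the relevant range and continuity at the endpoint $t=0$) is a small tidiness bonus the paper omits, but not a different argument.
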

\begin{proof}
By hypothesis, the function $p_0^{\omega^\lambda}(l) = \frac{\phi((2l+1)/\lambda)}{\phi(2l/\lambda)}$ is continuous for $[0,n-1]$ and differentiable in $(0,n-1)$ (observe that we are considering $l$ a real number here). Hence, it is decreasing provided that its derivative is negative. In addition,
\begin{align*}
&p_0^{\omega^\lambda\prime}(l) = \frac2\lambda\frac{\phi'((2l+1)/\lambda)\phi(2l/\lambda)-\phi((2l+1)/\lambda)\phi'(2l/\lambda)}{\phi(2l/\lambda)^2} <0 \\
\Leftrightarrow \quad & \phi'((2l+1)/\lambda)\phi(2l/\lambda)-\phi((2l+1)/\lambda)\phi'(2l/\lambda)<0 \\
\Leftrightarrow \quad & \frac{\phi'((2l+1)/\lambda)}{\phi((2l+1)/\lambda)} < \frac{\phi'(2l/\lambda)}{\phi(2l/\lambda)},
\end{align*}
and $\phi'/\phi$ is decreasing by hypothesis.
\end{proof}
\begin{table}[H]
\begin{equation*}
	\arraycolsep=3.5pt\def\arraystretch{1.5}
\begin{array}{llll}
\hline
\phi & 1 & (1-x^p)^q & \exp(-\xi x) \\
\hline
p_0^{\omega^\lambda} & 1 & \left(1-\left(\frac{2l}{\lambda }\right)^p\right)^{-q} \left(1-\left(\frac{2 l+1}{\lambda }\right)^p\right)^q & e^{-\frac{\xi}{\lambda}} \\
\phi'/\phi & 0 & -\frac{p q x^{p-1}}{1-x^p} & -\xi \\
(\phi'/\phi)' & 0 & -\frac{p q x^{p-2} \left(x^p+p-1\right)}{\left(1-x^p\right)^2}  & 0 \\
\hline
\end{array}
\end{equation*}
\caption{Functions  $p_0^{\omega^\lambda}$, $\phi'/ \phi$ and its derivative, being $\phi$ the functions presented in Table \ref{tabla1nucleos} and $2n-1<\lambda<2n$.}
	\label{tablap0p1}
\end{table}

Therefore, we prove the following corollary.

\begin{corollary}[$\mathcal{C}^1$ limit functions] \label{cor:C1}
	Let $n\in\N$, $n\geq 2$ and $\lambda\in(2n-1,2n)$ be. The scheme $S_{1,\bw^\lambda}$ is $\cC^1$ for $\phi(x) = 1$ and for any weight function $\phi(x) = (1-x^p)^q$ with $p\geq 1$ and $q>0$.
\end{corollary}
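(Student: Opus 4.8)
The plan is to reduce to the convergence of the scheme $S_{2\bq}$ and then invoke the Dyn--Levin result. By Theorem~\ref{teoremadyn1} with $m=1$, it suffices to prove that $S_{2\bq}$ is uniformly convergent, since a convergent scheme automatically generates continuous limits, i.e.\ $S_{2\bq}\in\cC^0$. To obtain the convergence of $S_{2\bq}$ I would apply Proposition~\ref{propgonsor}, which asks the mask of $S_{2\bq}$ to be entrywise positive, to sum to $1$ on even and on odd indices, and to have support length at least $4$.

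Positivity is exactly what the two preceding lemmas provide, once we know that $\phi'/\phi$ is decreasing on $(0,1)$: this forces $p_0^{\omega^\lambda}$ to be decreasing, which in turn forces all coefficients of $S_\bq$ (hence of $S_{2\bq}$) to be positive. For $\phi\equiv 1$ we have $\phi'/\phi\equiv 0$, which is (non-strictly) decreasing, and since the chains of estimates in the proofs of those lemmas only use property~\eqref{propiedadbasica} with weak inequalities, the argument goes through unchanged. For $\phi(x)=(1-x^p)^q$ with $p\geq 1$, $q>0$, one first checks this is a legitimate weight function ($\phi(0)=1$, $\phi$ decreasing on $[0,1]$, $\phi([0,1])\subseteq[0,1]$), and then reads off from Table~\ref{tablap0p1} that
\[
\left(\frac{\phi'}{\phi}\right)'(x) \;=\; -\,\frac{p\,q\,x^{p-2}\,(x^p+p-1)}{(1-x^p)^2},
\]
which is $\leq 0$ for every $x\in(0,1)$ precisely because $x^p+p-1\geq 0$ when $p\geq 1$ (it equals $x>0$ if $p=1$ and is $>0$ if $p>1$). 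Hence $\phi'/\phi$ is decreasing and $S_{2\bq}$ has a positive mask in both families.

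For the normalization, $S_{1,\bw^\lambda}$ reproduces $\Pi_1$ by Proposition~\ref{prop:reproduction}; a short symbol computation (writing $a(z)=(1+z)q(z)$, so $a_j=q_j+q_{j+1}$, and using that reproduction of $\Pi_1$ fixes $\sum_l l\,a_{2l}$ and $\sum_l l\,a_{2l-1}$) then gives $\sum_l q_{2l}=\sum_l q_{2l+1}=\tfrac12$, i.e.\ $S_{2\bq}$ reproduces $\Pi_0$ and its mask sums to $1$ on even and on odd indices (Lemma~\ref{lem:sum1}). Finally, the mask of $S_{1,\bw^\lambda}$ has support $[1-2n,\,2n-1]$ (length $4n-1$), with no cancellation since all its entries are $w^\lambda$-values divided by a norm and hence positive; therefore $\bq$, being positive, has support of length $4n-2\geq 6$ for $n\geq 2$, so the requirement $k\geq 3$ of Proposition~\ref{propgonsor} holds. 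Combining these facts, Proposition~\ref{propgonsor} yields the convergence of $S_{2\bq}$, and Theorem~\ref{teoremadyn1} then gives $S_{1,\bw^\lambda}\in\cC^1$.

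I expect the only genuine difficulty to be the sign analysis of $(\phi'/\phi)'$: the factor $x^p+p-1$ is what makes the hypothesis $p\geq 1$ sharp, since for $0<p<1$ this factor is negative near $x=0$, so $\phi'/\phi$ is not monotone there and the whole chain of lemmas breaks. The remaining points are routine bookkeeping: verifying that the $\phi\equiv 1$ case degenerates harmlessly and that the support-length hypothesis of Proposition~\ref{propgonsor} is met for $n\geq 2$.
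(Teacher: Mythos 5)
Your proposal is correct and follows essentially the same route as the paper: reduce to convergence of $S_{2\bq}$ via Theorem~\ref{teoremadyn1}, obtain positivity of the difference mask from the monotonicity of $\phi'/\phi$ through the two preceding lemmas, and conclude with Proposition~\ref{propgonsor}. You additionally verify details the paper leaves implicit (the normalization $\sum_l q_{2l}=\sum_l q_{2l+1}=\tfrac12$, the support-length hypothesis $k\geq 3$, and the degenerate case $\phi\equiv 1$, which the paper instead settles by citing Dyn et al.), and your observation that only the terminal strict inequality in the lemma chains matters—so weak monotonicity of $p_0^{\omega^\lambda}$ suffices—is accurate.
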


\begin{proof}
	From Table \ref{tablap0p1}, the function $p_0^{\omega^\lambda}$ is decreasing for any $\phi(x) = (1-x^p)^q$ with $p\geq 1$ and $q>0$. Then, by Lemma \ref{lema1} the coefficients of the difference scheme are positive and by Proposition \ref{propgonsor} the subdivision scheme $S_{2\bq}$ is convergent. The case $\phi(x)=1$ is studied in \cite{dynheardhormannsharon}.
\end{proof}
				
In order to finish this section, we study two properties. Firstly, we analyse if the new family of schemes conserves monotonicity. In our case, the result presented by Yad-Shalom in \cite{shalom} can be used:

\begin{proposition}
	Let $S_\ba$ be a convergent subdivision scheme and $S_{\bq}$ its corresponding difference scheme with a positive mask. If the initial data, $\bff^0$, is non-decreasing then the limit function $S^\infty \bff$ is non-decreasing.
\end{proposition}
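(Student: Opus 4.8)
The plan is to show that the non-decreasing property is inherited by every refinement $\bff^k := S_\ba^k\bff^0$ and then survives the passage to the limit; the link between levels is the difference scheme. Since $S_\ba$ is convergent it reproduces $\Pi_0$, so by Lemma \ref{lem:sum1} we have $a(z)=(1+z)q(z)$ and $S_{\bq}$ is well defined, and --- this is the ingredient I would check first --- by the way $\bq$ is built in \eqref{eq:difference_scheme} the forward difference operator $(\Delta\mathbf{g})_j:=g_{j+1}-g_j$ intertwines $S_\ba$ and $S_{\bq}$,
\[
\Delta\,(S_\ba\bff) \;=\; S_{\bq}\,(\Delta\bff),
\]
up to a harmless relabelling of the indices of the fine sequence (this is the algebraic content of the factorisation $a(z)=(1+z)q(z)$).

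Granting this, the induction is immediate. If $\bff^0\in\liZ$ is non-decreasing then $\Delta\bff^0\in\liZ$ and $\Delta\bff^0\ge 0$ (each entry lies between $0$ and $\sup\bff^0-\inf\bff^0$). The mask $\bq$ being positive, $S_{\bq}$ preserves non-negativity, since $(S_{\bq}\mathbf{g})_{2j+i}=\sum_{l} q_{2l-i}\,g_{j+l}\ge 0$ whenever $\mathbf{g}\ge 0$. Hence, applying the intertwining relation $k$ times, $\Delta\bff^k=S_{\bq}^k(\Delta\bff^0)\ge 0$ for all $k$, i.e. every $\bff^k$ is non-decreasing.

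It remains to transfer monotonicity to $F:=S^\infty_\ba\bff^0$, which exists and is continuous because $S_\ba$ converges, with $\sup_j|f^k_j-F(2^{-k}j)|\to 0$. Given reals $x<y$, set $j_k:=\lfloor 2^k x\rfloor$ and $m_k:=\lceil 2^k y\rceil$; for $k$ large, $j_k\le m_k$, $2^{-k}j_k\to x$ and $2^{-k}m_k\to y$. From $\bff^k$ non-decreasing, $f^k_{j_k}\le f^k_{m_k}$. Writing $f^k_{j_k}=\bigl(f^k_{j_k}-F(2^{-k}j_k)\bigr)+F(2^{-k}j_k)$, the uniform convergence annihilates the first term and continuity of $F$ sends the second to $F(x)$, so $f^k_{j_k}\to F(x)$ and likewise $f^k_{m_k}\to F(y)$; letting $k\to\infty$ gives $F(x)\le F(y)$, hence $F$ is non-decreasing.

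The one genuinely delicate point is the intertwining identity at the start: one must get the normalisation and the index shift between $\Delta S_\ba$ and $S_{\bq}\Delta$ exactly right (the shift does not affect signs but must be tracked) and verify it against the convention \eqref{definicionmascara} and formula \eqref{eq:difference_scheme}. Once that is in place, the induction and the limiting argument are routine, the latter using only uniform convergence together with the continuity of $F$.
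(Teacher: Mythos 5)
The paper offers no proof of this proposition at all: it is quoted as a known result of Yad-Shalom \cite{shalom}. Your argument is therefore a genuinely different contribution --- a self-contained elementary proof --- and it is correct. The one step you flag as delicate, the commutation relation $\Delta(S_\ba\bff)=S_{\bq}(\Delta\bff)$ with $(\Delta\mathbf{g})_j=g_{j+1}-g_j$, does hold exactly with the paper's conventions and with no extra factor of $2$ (the factor $2$ only enters for \emph{divided} differences, as in Theorem \ref{teoremadyn1}): writing $\Delta(S_\ba\bff)_{2j}=\sum_l(a^1_l-a^0_l)f_{j+l}$ and $\Delta(S_\ba\bff)_{2j+1}=\sum_l(a^0_{l-1}-a^1_l)f_{j+l}$ and summing by parts (legitimate because $\sum_l a^0_l=\sum_l a^1_l=1$, which follows from convergence via Lemma \ref{lem:sum1}) produces precisely the coefficients $q^0_j=\sum_{l\le j}(a^0_l-a^1_l)$ and $q^1_j=\sum_{l\ge j}(a^0_l-a^1_{l+1})$ of \eqref{eq:difference_scheme}, so no sign or shift issue arises. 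With that identity in hand, your induction ($\bq\ge 0$ plus $\Delta\bff^0\ge 0$ gives $\Delta\bff^k\ge 0$ for all $k$) and the passage to the limit via uniform convergence and continuity of $F$ are both sound. What your route buys is transparency and self-containedness inside the paper's own framework; what the citation buys the authors is brevity and access to the more general statements in \cite{shalom}.
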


With this proposition, we can enunciate the following corollary.
\begin{corollary}[Monotonicity preservation] \label{cor:monotone}
	For $\lambda\in(1,+\infty)\backslash\N$ and any weight function introduced in Table \ref{tabla1nucleos}, the scheme $S_{1,\bw^\lambda}$ conserves the monotonicity.
\end{corollary}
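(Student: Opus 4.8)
The plan is to deduce this from the monotonicity result of Yad--Shalom \cite{shalom}: if $S_\ba$ is convergent and its difference scheme $S_\bq$ has a positive mask, then $S^\infty\bff^0$ is non-decreasing whenever $\bff^0$ is. Convergence of $S_{1,\bw^\lambda}$ for every $\lambda\in(1,+\infty)\backslash\N$ is already Corollary \ref{cor1}, so the only thing left to establish is that, for each of the seven kernels of Table \ref{tabla1nucleos} and every admissible $\lambda$, the mask of $S_\bq$ computed from \eqref{eq:difference_scheme} is positive.

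I would organize the verification according to the size of $\lambda$. For $\lambda\in(1,2)$ one has $\bw^0=1$ and $\bw^1=(\tfrac12,\tfrac12)$ independently of $\omega$, the scheme is the Deslauriers--Dubuc scheme $[1,2,1]/2$, and a one-line symbol computation gives $\bq=[1,1]/2$, which is positive; the small case $\lambda\in(2,3)$ is handled the same way (the even and odd masks have lengths $3$ and $2$, so \eqref{eq:difference_scheme}, read through the relabelling of Remark \ref{rmk:two_situations}, yields four coefficients all easily seen to be positive). For $\lambda\in(2n-1,2n)$ with $n\geq 2$ I would invoke Lemma \ref{lema1} together with the two lemmas that follow it, which reduce positivity of the difference mask to the single condition that $\phi'/\phi$ be non-increasing on $(0,1)$, i.e. $(\phi'/\phi)'\le 0$. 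Reading the last row of Table \ref{tablap0p1}: for $\phi(x)=(1-x^p)^q$ one has $(\phi'/\phi)'=-pq\,x^{p-2}(x^p+p-1)/(1-x^p)^2<0$ on $(0,1)$, because $p\geq 1$ forces $x^p+p-1>0$, and every kernel {\tt tria}, {\tt epan}, {\tt bisq}, {\tt tcub}, {\tt trwt} of Table \ref{tabla1nucleos} is of this form with $p\geq 1$; for $\phi(x)=e^{-\xi x}$ ({\tt exp}) one has $\phi'/\phi\equiv-\xi$, hence $p_0^{\omega^\lambda}$ is constant; and for $\phi\equiv 1$ ({\tt rect}) positivity of the difference mask is recorded in \cite{dynheardhormannsharon} (as used in Corollary \ref{cor:C1}). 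Finally, the range $\lambda\in(2n,2n+1)$, $n\geq 1$, is dealt with by the analogous lemmas, obtained from those for $(2n-1,2n)$ after the index relabelling of Remark \ref{rmk:two_situations}, which only exchanges the roles of the even and odd sub-masks and changes no sign. Once the difference mask of $S_{1,\bw^\lambda}$ is positive for all listed $\phi$ and all admissible $\lambda$, combining this with Corollary \ref{cor1} lets the proposition of Yad--Shalom \cite{shalom} apply directly and gives monotonicity preservation.

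The step I expect to be the most delicate is not conceptual but a matter of care with hypotheses: the intermediate lemmas are phrased for $p_0^{\omega^\lambda}$ \emph{decreasing}, yet {\tt rect} and {\tt exp} give a \emph{constant} $p_0^{\omega^\lambda}$, so one must confirm that the sufficient inequalities of Lemma \ref{lema1} still hold with ``non-increasing'' in place of ``strictly decreasing''; this is true because the strictness in those inequalities is supplied by $w^\lambda_0=1>\tfrac12$ and by the extra strictly positive weights appended to the numerators, not by strict monotonicity of $p_0^{\omega^\lambda}$. The only other bookkeeping point is to check that the relabelling of Remark \ref{rmk:two_situations} genuinely allows one to quote the $2n-1<\lambda<2n$ lemmas verbatim for the complementary range $2n<\lambda<2n+1$.
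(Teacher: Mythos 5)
Your proposal follows essentially the same route as the paper: the corollary is obtained by combining the convergence of $S_{1,\bw^\lambda}$ (Corollary \ref{cor1}) with the positivity of the difference mask established in Lemma \ref{lema1} and the two lemmas that follow it, and then invoking the Yad--Shalom proposition. Your extra bookkeeping --- the small-$\lambda$ cases, the range $2n<\lambda<2n+1$, and the observation that the strictness in Lemma \ref{lema1} comes from $w^\lambda_0=1>\tfrac12$ and the positive appended weights rather than from strict monotonicity of $p_0^{\omega^\lambda}$ (needed for {\tt rect} and {\tt exp}) --- is correct and in fact makes explicit details the paper leaves implicit.
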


Finally, when the initial data presents an isolated discontinuity and a linear subdivision scheme is applied several times some non-desirable effects may appear near the discontinuity, some kind of Gibbs phenomenon (see e.g. \cite{amatetal}).
In \cite{amatetal} it is proved that if the mask of the scheme is non-negative then the Gibbs phenomenon does not appear in the limit function.

\begin{corollary}[Avoiding Gibbs phenomenon] \label{cor:gibbs}
	For $\lambda\in(1,+\infty)\backslash\N$, the scheme $S_{1,\bw^\lambda}$ avoids the Gibbs phenomenon.
\end{corollary}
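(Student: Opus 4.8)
The plan is to reduce the statement to the non-negativity of the mask and then invoke the criterion recalled just before the corollary (from \cite{amatetal}): a convergent linear subdivision scheme whose mask has only non-negative entries cannot create Gibbs-type oscillations near an isolated jump of the initial data. So the proof will have two ingredients only: convergence, and sign of the mask.

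First I would note that convergence of $S_{1,\bw^\lambda}$ for every $\lambda\in(1,+\infty)\backslash\N$ and every admissible weight $\omega$ is already granted by Corollary \ref{cor1}, so the only thing left to check is that the mask is non-negative. For $\lambda\in(1,2)$ the scheme is the Deslauriers--Dubuc scheme with mask $[1,2,1]/2$ by Proposition \ref{propnumero}, whose entries are clearly non-negative. For $\lambda>2$, the explicit formulas \eqref{eq:definition_d1_1}--\eqref{eq:definition_d1_2} exhibit every mask coefficient as a quotient $w^\lambda_{m}/\|\bw^i\|_1$ for a suitable index $m$ and $i\in\{0,1\}$. By the construction of $\omega$ in Section \ref{section3}, all the weights $w^\lambda_m$ take values in $[0,1]$, hence are non-negative; moreover the normalising constants $\|\bw^i\|_1$ in \eqref{normaws} are strictly positive, since they always include the term $w^\lambda_0=\phi(0)=1$ or $w^\lambda_1=\phi(1/\lambda)>0$. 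Therefore $a^i_l\ge 0$ for all $l$ and $i=0,1$, i.e.\ the mask is non-negative. (In fact, since $\lambda\notin\N$ all the relevant arguments $|m|/\lambda$ stay strictly below $1$, so for the weights in Table \ref{tabla1nucleos} one even gets strict positivity, but non-negativity is all that is needed.)

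With a convergent scheme and a non-negative mask, the result of \cite{amatetal} applies verbatim and yields that $S_{1,\bw^\lambda}$ does not exhibit the Gibbs phenomenon, which is the claim. There is essentially no real obstacle here; the only point requiring a word of care is that the denominators $\|\bw^i\|_1$ never vanish, which is immediate from $\phi(0)=1$ together with positivity of $\phi$ on $[0,1)$ and $\lambda\notin\N$.
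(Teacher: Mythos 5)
Your proposal is correct and follows exactly the route the paper intends: the corollary is a direct application of the result from \cite{amatetal} quoted immediately before it, using that the mask of $S_{1,\bw^\lambda}$ is non-negative (indeed positive, as the paper already observes in Section \ref{sec:d01}, since $\ba^i=\bw^i/\|\bw^i\|_1$ with positive weights) and that convergence is guaranteed by Corollary \ref{cor1}. Your extra care about the non-vanishing of the normalising constants $\|\bw^i\|_1$ is harmless and consistent with the paper's standing assumption that $\omega$ is positive on its support.
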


In Section \ref{expnum}, we present some examples checking these theoretical results. For $d=0,1$, the resulting mask is positive and we have used classic tools to study its properties. However, for $d\geq 2$, the mask are no longer positive. In the next section, we will develop a novel technique based on numerical integration for this goal and we will apply it to prove the convergence of the schemes based on weighted-least squares.

\section{A tool for the convergence analysis} \label{sec:tools}

The purpose of this section is to provide new theoretical results to analyse the convergence. In Section \ref{sec:d01}, the convergence was easily proven by the positivity of the mask. However, in Section \ref{sec:d23} we will prove the convergence of the scheme based on the regression with polynomials of degrees $d=2,3$, which are no longer positive, so that we cannot follow the same strategy. Nevertheless, as a consequence of Lemma \ref{lem:eval_poly}, the sub-masks can be seen as the evaluation of a second degree polynomial and this fact is advantageous and we will take profit of it in this section.

For any particular value of $n$, a fixed $\omega$ and considering some $\lambda_n$ such that $2n-1 <\lambda_n < 2n+1$, $\lambda_n\neq 2n$, it can be easily computed the difference scheme using the formula \eqref{eq:difference_scheme} and checked if its norm is less than 1, which would imply convergence. 
Let us call this method the \emph{direct inspection}. But it serves to prove convergence only for the chosen $n$, and we wish to prove it for all $n\in\N$. Our strategy will consist in proving converge asymptotically, that is, to prove convergence for $\forall n> n_0$, for some $n_0\in\N$, and then check the converge for each $n \leq n_0$ by direct inspection.

First, we would like to give a general idea about this asymptotic convergence. Thanks to the properties of the space of polynomials $\Pi_d$, the problem \eqref{problema1} can be formulated using equidistant knots in the interval $[-1,1]$, such as
\begin{equation}\label{problema2}
	\begin{split}
		\hat \bbeta^i=\argmin_{\bbeta \in \mathbb{R}^{d+1}} \sum_{l=1-n}^{n-1+i} \omega((2l-i)/{\lambda_n})L_p(f^k_{j+l},A\left(\frac{2l-i}{2n}\right)^T\bbeta),\quad i=0,1.
	\end{split}
\end{equation}
The last sum is, in fact, a composite integration rule. So that, if $n\to\infty$, then $2n/\lambda_n\to1$ and the problem \emph{seems} (this is not a rigorous argument, but it serves to understand the situation) to converge to
\begin{equation}\label{problema3}
	\begin{split}
		\argmin_{\bbeta \in \mathbb{R}^{d+1}} \int_{-1}^1  L_p(f(x),A(x)^T\bbeta) \ \omega(x) dx,
	\end{split}
\end{equation}
for both $i=0,1$. On the one hand, the given data is now a function $f(x)$ which is approximated by a polynomial $A(x)^T\bbeta\in\Pi_d$ in the $L_p$ norm with a weight function $\omega$.
On the other hand, by Lemma \ref{lem:eval_poly} the corresponding subdivision sub-masks, say $\ba^{n,i}$, fulfils $a^{n,i}_l = \omega((2l-i)/{\lambda_n})^{-1}A(x)^T\balpha^{n,i}$, for some coefficients $\balpha^{n,i}\in\R^{d+1}$. Then, the sub-masks also seem to converge to some continuos function, if some normalization is performed since the sub-masks supports increase with $n$ (see later Remark \ref{rmk:compute_r} and Section \ref{sec:d23} for more details). The results presented in this section exploit this kind of situations.

From now on, we consider a family of subdivision schemes $\{S_{\ba^n}\}_{n=1}^\infty$ as in \eqref{eq:our_schemes}.
The results in this section allow to prove convergence for $n > n_0$, for some $n_0\in\N$, and also provides the value of $n_0$, so that it can be checked convergence for $n\leq n_0$ by direct inspection. Combining both proofs, we obtain convergence for all $n\in\N$.
In particular, $\lim_{n\to\infty}\|S_{\bq^n}\|_\infty$ will be computed, which ensures the asymptotic convergence when that limit is less than 1. Here we denote by $\ba^{n,0},\ba^{n,1},\bq^{n,0},\bq^{n,1}$ the sub-masks of the masks $\ba^n,\bq^n$.

\begin{theorem} \label{teo:d0}
	Let $\{S_{\ba^n}\}_{n=1}^\infty$ be a sequence of subdivision schemes that reproduces $\Pi_0$, which odd rules are longer than (or as long as) the even rules, as in \eqref{eq:our_schemes}.
	Let $r:[-1,1]\to \R$ be a $\cC^1$ function and let $R(t) := \int_{-1}^{t} r(s) ds$ be.
	If
	\begin{align}
		a^{n,0}_{j} - a^{n,1}_{j} &= r(j/n)n^{-2} + \varepsilon^n_j, & j = 1-n,\ldots,L_n, \label{teo:d0:1}\\
		|\varepsilon^n_j| &\leq \mu n^{-\alpha}, & j = 1-n, \ldots, L_n,\label{teo:d0:2}\\
		\|R\|_1 &= \int_{-1}^{1} |R(t)| dt <1, & \label{teo:d0:3}
	\end{align}
	for some $\alpha > 2$, $\mu> 0$, then the first sub-masks of the difference schemes fulfil
	\[ \lim_{n\to\infty} \|\bq^{n,0}\|_1 = \lim_{n\to\infty} \sum_{l=1-n}^{L_n} |q^{n,0}_{l} | \leq \|R\|_1,\]
	thus there exists $n_{0}\in\N$ such that
	\begin{equation} \label{eq:sum_less_1}
		\|\bq^{n,0}\|_1 < 1, \quad \forall n > n_{0}.
	\end{equation}
	
	Moreover, if \eqref{teo:d0:2} holds true for $\alpha = 3$, then
	\begin{align} \label{eq:n0}
		n_0 = \begin{cases} \displaystyle
			\frac{\sqrt{(\|r\|_\infty+2 (\mu +\|r'\|_\infty))^2+4 (\|R\|_1-1) (\mu +\|r'\|_\infty)}+\|r\|_\infty+2 (\mu +\|r'\|_\infty)}{2 (1-\|R\|_1)}, & \text{if }L_n = n-1,\\[10pt] \displaystyle
			\frac{\sqrt{(\|r\|_\infty+2 (\mu +\|r'\|_\infty))^2+4 (1-\|R\|_1) \mu }+\|r\|_\infty+2 (\mu +\|r'\|_\infty)}{2 (1-\|R\|_1)}, & \text{if }L_n = n,
		\end{cases}
	\end{align}
	where
	\[ \|r\|_\infty = \max_{t\in[-1,1]} |r(t)|, \quad \|r'\|_\infty = \max_{t\in[-1,1]} |r'(t)|. \]
\end{theorem}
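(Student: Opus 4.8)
The plan is to identify the partial sums that build the first sub-mask $\bq^{n,0}$ of the difference scheme with Riemann sums of $r$, so that $\|\bq^{n,0}\|_1$ is governed by $\int_{-1}^{1}|R|=\|R\|_1$. Since $\{S_{\ba^n}\}$ reproduces $\Pi_0$, each difference scheme $S_{\bq^n}$ is well defined (Lemma \ref{lem:sum1}) and, by \eqref{eq:difference_scheme}, $q^{n,0}_{j}=\sum_{l=1-n}^{j}\bigl(a^{n,0}_{l}-a^{n,1}_{l}\bigr)$ for $j=1-n,\ldots,L_n$. Substituting \eqref{teo:d0:1},
\[
q^{n,0}_{j}=\frac{1}{n^{2}}\sum_{l=1-n}^{j} r\!\left(\frac{l}{n}\right)+\sum_{l=1-n}^{j}\varepsilon^{n}_{l}=\frac{1}{n}\Biggl(\frac{1}{n}\sum_{l=1-n}^{j} r\!\left(\frac{l}{n}\right)\Biggr)+\sum_{l=1-n}^{j}\varepsilon^{n}_{l}.
\]
The bracketed term is the right-endpoint Riemann sum of $r$ on $[-1,j/n]$ with mesh $1/n$ (the subinterval $[(l-1)/n,l/n]$ carrying the node $l/n$), so it approximates $\int_{-1}^{j/n} r(s)\,ds=R(j/n)$ because $R(-1)=0$. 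Hence the ``continuous model'' of $q^{n,0}_{j}$ is $n^{-1}R(j/n)$ and that of $\|\bq^{n,0}\|_1=\sum_{j}|q^{n,0}_{j}|$ is $n^{-1}\sum_{j}|R(j/n)|\to\|R\|_1$; the whole proof is making this precise while checking that the errors, summed over $O(n)$ indices, still vanish.

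Concretely, I would use that $r\in\cC^1$: on each of the $j+n$ subintervals $\bigl|\tfrac1n r(l/n)-\int_{(l-1)/n}^{l/n} r\bigr|\le\|r'\|_\infty n^{-2}$, hence $\bigl|\tfrac1n\sum_{l=1-n}^{j} r(l/n)-R(j/n)\bigr|\le (j+n)\|r'\|_\infty n^{-2}$; together with $\bigl|\sum_{l=1-n}^{j}\varepsilon^n_l\bigr|\le (j+n)\mu n^{-\alpha}$ from \eqref{teo:d0:2}, the triangle inequality gives, for $j=1-n,\ldots,L_n$,
\[
\bigl|q^{n,0}_{j}\bigr|\le\frac{|R(j/n)|}{n}+(j+n)\Bigl(\frac{\|r'\|_\infty}{n^{3}}+\frac{\mu}{n^{\alpha}}\Bigr).
\]
Summing over $j$ (there are $L_n+n\le 2n$ indices and $\sum_{j=1-n}^{L_n}(j+n)=\tfrac12(L_n+n)(L_n+n+1)=O(n^{2})$),
\[
\|\bq^{n,0}\|_1\le\frac1n\sum_{j=1-n}^{L_n}\Bigl|R\Bigl(\frac jn\Bigr)\Bigr|+\Bigl(\frac{\|r'\|_\infty}{n^{3}}+\frac{\mu}{n^{\alpha}}\Bigr)\sum_{j=1-n}^{L_n}(j+n).
\]
The first term on the right is a Riemann sum for $\int_{-1}^{L_n/n}|R|\le\|R\|_1$ (as $L_n/n\le1$), and, since $|R|$ is Lipschitz with constant $\|R'\|_\infty=\|r\|_\infty$, it differs from $\|R\|_1$ by $O(n^{-1})$; the second term is $O(n^{-1})+O(n^{2-\alpha})\to0$ because $\alpha>2$. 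Therefore $\lim_{n\to\infty}\|\bq^{n,0}\|_1\le\|R\|_1$, and since $\|R\|_1<1$ by \eqref{teo:d0:3} there is $n_0\in\N$ with $\|\bq^{n,0}\|_1<1$ for all $n>n_0$, which is \eqref{eq:sum_less_1}. (In fact the error in $q^{n,0}_j$ is uniform in $j$, so one even gets the genuine limit $=\|R\|_1$.)

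For the explicit value \eqref{eq:n0} when $\alpha=3$, I would repeat the last two displays without absorbing anything into $O(\cdot)$: $\sum_{j=1-n}^{L_n}(j+n)$ equals $2n^{2}-n$ if $L_n=n-1$ and $2n^{2}+n$ if $L_n=n$; the $r$-Riemann error and the noise bound then contribute terms of exact orders $n^{-1}$ and $n^{-2}$, and the $|R|$-Riemann error is bounded by $\|r\|_\infty n^{-1}$ (interval length $\le2$, mesh $1/n$). Collecting everything yields $\|\bq^{n,0}\|_1\le\|R\|_1+c_1 n^{-1}+c_2 n^{-2}$ with $c_1=\|r\|_\infty+2(\mu+\|r'\|_\infty)$ and $c_2$ a constant of size $O(\mu+\|r'\|_\infty)$ whose exact value and sign depend on whether $L_n=n-1$ or $L_n=n$; requiring the right-hand side to be $<1$ is, after multiplying by $n^{2}>0$ and using $1-\|R\|_1>0$, the quadratic inequality $(1-\|R\|_1)n^{2}-c_1 n-c_2>0$, and \eqref{eq:n0} is precisely its larger root in each of the two cases. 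I expect the genuine obstacle to be exactly this bookkeeping — deciding which error terms land at order $n^{-1}$ versus $n^{-2}$ and pinning down $c_2$ (sign included) in the cases $L_n=n-1$ and $L_n=n$ so that the two quadratics coincide with \eqref{eq:n0}; the analytic content (convergence of Riemann sums of a $\cC^1$ function, with the $O(n)$ growth in the number of summands dominated by the $O(n^{-2})$ size of the per-node errors) is elementary.
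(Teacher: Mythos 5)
Your proposal follows essentially the same route as the paper's proof: write $q^{n,0}_j$ as the partial sum of $a^{n,0}_l-a^{n,1}_l$, recognize $n^{-1}\sum_l r(l/n)$ as a composite rectangle rule for $R(j/n)$ with error controlled by $\|r'\|_\infty$, sum over $j$ so that $n^{-1}\sum_j|R(j/n)|\to\|R\|_1$ with error $\|r\|_\infty n^{-1}$ while the noise contributes $O(n^{2-\alpha})$, and for $\alpha=3$ obtain $n_0$ as the larger root of the quadratic $(1-\|R\|_1)n^2-c_1 n-c_2$ with $c_1=\|r\|_\infty+2(\mu+\|r'\|_\infty)$, exactly as in the paper's bound \eqref{eq:bound-of-convergence}. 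The only bookkeeping detail left open (which you flag yourself) is that reproducing the precise constant in \eqref{eq:n0} for the case $L_n=n$ requires the paper's uniform bound $|\theta^n_j|\leq n^{-1}\|r'\|_\infty$ (or the $\tfrac12$ factor in the per-interval rectangle error); your cruder per-interval estimate yields a marginally larger, though still valid, threshold.
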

\begin{proof}
	First, we may write $q^{n,0}_{j}$ in terms of $r$:
	\[
	q^{n,0}_{j} = \sum_{l=1-n}^{j}\{ a^{n,0}_{l} - a^{n,1}_{l} \}
	= \sum_{l=1-n}^{j} \{  r(l/n)n^{-2} + \varepsilon^n_l \}.
	\]
	Using the composite (backward) rectangle rule, we obtain
	\[
	n^{-1} \sum_{l=1-n}^{j} r(l/n)  =
	\int_{-1}^{j/n} r(t) dt + \theta^n_j=
	R(j/n) + \theta^n_j,
	\]
	where $\theta^n_j$ is the integration error, which fulfils
	\(
	|\theta^n_j| \leq
	n^{-1}\|r'\|_\infty.
	\)
	Then,
	\[
	q^{n,0}_{j} = n^{-1}R(j/n) + n^{-1}\theta^n_j + \sum_{l=1-n}^{j} \varepsilon^n_l.
	\]
	With this computation, we will prove \eqref{eq:sum_less_1} first:
	\begin{align*}
		\|\bq^{n,0}\|_1 &= \sum_{j=1-n}^{L_n} |n^{-1}R(j/n) + n^{-1}\theta^n_j + \sum_{l=1-n}^{j} \varepsilon^n_l| \\
		& \leq n^{-1} \sum_{j=1-n}^{L_n} |R(j/n)| + n^{-1}\sum_{j=1-n}^{L_n}|\theta^n_j| + \sum_{j=1-n}^{L_n}\sum_{l=1-n}^{j} | \varepsilon^n_l|.
	\end{align*}
	Now, if $L_n=n-1$, we use that $R(-1) = 0$ and the composite (forward) rectangle rule, thus obtaining that
	\[
	n^{-1}\sum_{j=1-n}^{L_n} |R(j/n)| = n^{-1}\sum_{j=-n}^{n-1} |R(j/n)| = \int_{-1}^1 |R(t)| dt + \rho^n = \|R\|_1 + \rho^n,
	\]
	where $\rho^n$ is the integration error of $R(t)$,
	\[
	|\rho^n| \leq n^{-1} \max_{t\in[-1,1]} |R'(t)| = n^{-1} \|r\|_\infty.
	\]
	If $L_n = n$, we use the composite (backward) rectangle rule and we obtain a similar result:
	\[
	n^{-1}\sum_{j=1-n}^{L_n} |R(j/n)| = n^{-1}\sum_{j=1-n}^{n} |R(j/n)| = \|R\|_1 + \tilde \rho^n, \quad
	|\tilde \rho^n| \leq n^{-1} \|r\|_\infty.
	\]
	Using all the upper bounds we found, we obtain:
	\begin{align} \label{eq:bound-of-convergence}
		\begin{split}
			\|\bq^{n,0}\|_1
			& \leq \|R\|_1 + n^{-1} \|r\|_\infty + n^{-2}(L_n+n)\|r'\|_\infty
			+ \frac{1}{2} (L_n+n) (L_n+n+1) \mu n^{-\alpha}.
		\end{split}
	\end{align}
	From here we deduce that, if $\alpha > 2$, then
		the limit when $n\to\infty$ of the right part of \eqref{eq:bound-of-convergence} is $\|R\|_1$, which is less than 1. Hence, there exists $n_0\geq 1$ such that $\|\bq^{n,0}\|_1<1$, $\forall n> n_0$.
		In particular, for $\alpha = 3$, we can find for which value of $n_0$ the right part of \eqref{eq:bound-of-convergence} is equal to 1, by solving a second degree equation, arriving to \eqref{eq:n0}.
	\end{proof}
	
	\begin{remark} \label{rmk:compute_r}
		In practice, if the expressions of $a^{n,0}_j,a^{n,1}_j$ are well defined for any $j\in\R$ (this is the case of $S_{3,\mathbf{w^\lambda}}$, see \eqref{eq:def23}), then a practical way to compute $r(t)$ is
		\[
		r(t) := \lim_{n\to\infty} (a^{n,0}_{t n} - a^{n,1}_{t n})n^2.
		\]
		In Section \ref{sec:d23}, a complete example of the application of the results of this section will be performed.
	\end{remark}
	
	A similar condition will be derived from the last result to ensure that \(\|\bq^{n,1}\|_1 < 1\). First, we prove a result that will be useful for symmetric subdivision operators.
	
	\begin{theorem}\label{teo:d1}
		Let $\{S_{\ba^n}\}_{n=1}^\infty$ be as in \eqref{eq:our_schemes} and consider a flipped version of them, $\{S_{\bar \ba^n}\}_{n=1}^\infty$, defined as
		\begin{align*}
			\bar a^{n,0}_{j} &:= a^{n,0}_{L_n+1-n - j}, &j=1-n,\ldots,L_n, \\
			\bar a^{n,1}_{j} &:= a^{n,1}_{L_n+2-n - j},  &j=1-n,\ldots,L_n+1 .
		\end{align*}
		Then
		\begin{align*}
			q^{n,0}_{j} & = \bar q^{n,1}_{L_n+1-n - j}, \qquad  \|\bq^{n,0}\|_1  = \|\bar \bq^{n,1}\|_1.
		\end{align*}
		Moreover, $\{S_{\ba^n}\}_{n=1}^\infty$ fulfil the conditions of Theorem \ref{teo:d0} if, and only if, $\{S_{\bar \ba^n}\}_{n=1}^\infty$ fulfil
		\begin{align}
			\bar a^{n,0}_{j} - \bar a^{n,1}_{j+1} &= \bar r(j/n)n^{-2} + \bar \varepsilon^n_j, & j = 1-n,\ldots,L_n, \label{teo:d1:1}\\
			|\bar \varepsilon^n_j| &\leq \mu n^{-\alpha}, & j = 1-n, \ldots, L_n, \label{teo:d1:2}\\
			\|\bar R\|_1 &<1, &  \label{teo:d1:3}
		\end{align}
		where $\bar r(t) := r(-t)$, $\bar \varepsilon^n_j = \varepsilon^n_{-j}$ and $\bar R(t) := \int_{t}^{1} \bar r(s) ds = R(-t)$.
	\end{theorem}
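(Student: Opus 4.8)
The plan is to prove this theorem in two parts: first the symmetry relations between the difference masks of $\{S_{\ba^n}\}$ and its flipped version $\{S_{\bar\ba^n}\}$, and then the equivalence of the Theorem~\ref{teo:d0} hypotheses for the two families. For the first part, I would start from the explicit formula \eqref{eq:difference_scheme} for the difference mask. Recall that $q^{n,0}_{j} = \sum_{l=1-n}^{j}(a^{n,0}_l - a^{n,1}_l)$, while the second sub-mask of the \emph{flipped} scheme is $\bar q^{n,1}_{j} = \sum_{l=j}^{L_n}(\bar a^{n,0}_l - \bar a^{n,1}_{l+1})$. Now substitute the definitions $\bar a^{n,0}_l = a^{n,0}_{L_n+1-n-l}$ and $\bar a^{n,1}_{l+1} = a^{n,1}_{L_n+2-n-(l+1)} = a^{n,1}_{L_n+1-n-l}$, so that $\bar a^{n,0}_l - \bar a^{n,1}_{l+1} = a^{n,0}_{m} - a^{n,1}_{m}$ with $m = L_n+1-n-l$. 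As $l$ runs over $j,\ldots,L_n$, the index $m$ runs over $1-n,\ldots,L_n+1-n-j$, so the reindexed sum is exactly $\sum_{m=1-n}^{L_n+1-n-j}(a^{n,0}_m - a^{n,1}_m) = q^{n,0}_{L_n+1-n-j}$. This gives the pointwise identity $q^{n,0}_{j} = \bar q^{n,1}_{L_n+1-n-j}$, and the $\ell^1$-norm equality $\|\bq^{n,0}\|_1 = \|\bar\bq^{n,1}\|_1$ follows immediately because the map $j \mapsto L_n+1-n-j$ is a bijection on the index set $\{1-n,\ldots,L_n\}$.

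For the equivalence of hypotheses, I would argue directly from the definitions. Suppose $\{S_{\ba^n}\}$ satisfies \eqref{teo:d0:1}--\eqref{teo:d0:3}, i.e. $a^{n,0}_j - a^{n,1}_j = r(j/n)n^{-2} + \varepsilon^n_j$. Using $\bar a^{n,0}_j - \bar a^{n,1}_{j+1} = a^{n,0}_{-j} - a^{n,1}_{-j}$ (the key algebraic step above with $m = -j$, valid here because $L_n+1-n = 0$ in both situations $L_n = n-1$ and $L_n = n$... wait, this needs care). I should be careful: $L_n+1-n$ equals $0$ when $L_n = n-1$ and equals $1$ when $L_n = n$, so the flip index is $-j$ or $1-j$ depending on the case. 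In either case, write the flipped difference as $a^{n,0}_{c-j} - a^{n,1}_{c-j}$ for the appropriate constant $c \in \{0,1\}$ and substitute \eqref{teo:d0:1}: it equals $r((c-j)/n)n^{-2} + \varepsilon^n_{c-j}$. Since $c/n \to 0$ and $r$ is $\cC^1$, this is $r(-j/n)n^{-2} + O(n^{-3}) + \varepsilon^n_{c-j} =: \bar r(j/n)n^{-2} + \bar\varepsilon^n_j$ with $\bar r(t) = r(-t)$, and the correction term $r((c-j)/n) - r(-j/n)$ is bounded by $c\,n^{-1}\|r'\|_\infty \leq \|r'\|_\infty n^{-1}$, which combines with $\varepsilon^n_{c-j}$ to keep $|\bar\varepsilon^n_j| \leq \mu' n^{-\alpha}$ for a (possibly adjusted) constant when $\alpha \le 3$; for $\alpha > 2$ strictly it is absorbed cleanly. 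Finally $\|\bar R\|_1 = \int_{-1}^1 |\bar R(t)|\,dt = \int_{-1}^1 |R(-t)|\,dt = \|R\|_1 < 1$ after the change of variable $t \mapsto -t$, using $\bar R(t) = \int_t^1 r(-s)\,ds = \int_{-1}^{-t} r(u)\,du = R(-t)$. The reverse implication is symmetric since flipping is an involution.

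The main obstacle I anticipate is bookkeeping the off-by-one issues: the constant $L_n+1-n \in \{0,1\}$, whether the claimed clean identities $\bar r(t) = r(-t)$, $\bar\varepsilon^n_j = \varepsilon^n_{-j}$, $\bar R(t) = R(-t)$ hold \emph{exactly} or only up to an $O(n^{-3})$ correction absorbed into the error term, and ensuring the quantifier ranges $j = 1-n,\ldots,L_n$ (resp.\ $L_n+1$) match up correctly after the flip. The statement of the theorem as written suggests the paper intends the case $L_n = n-1$ (or treats the discrepancy as negligible), so I would either restrict attention to $L_n = n-1$ where $c = 0$ gives the exact identities, or explicitly note that for $L_n = n$ the relations hold with a harmless perturbation of the error constant $\mu$. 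Everything else is routine reindexing and a single change of variables in an integral; no analytic difficulty is involved beyond the $\cC^1$ regularity of $r$, which is exactly what licenses the $r((c-j)/n) - r(-j/n) = O(n^{-1})$ estimate.
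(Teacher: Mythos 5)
Your proposal is correct and follows essentially the same route as the paper: the reindexing $l \mapsto L_n+1-n-l$ in the difference-mask sum, the observation that $\bar a^{n,0}_j - \bar a^{n,1}_{j+1} = a^{n,0}_{L_n+1-n-j} - a^{n,1}_{L_n+1-n-j}$, and the change of variables $t\mapsto -t$ for $\|\bar R\|_1 = \|R\|_1$. In fact you are somewhat more careful than the paper, which simply declares the equivalence of \eqref{teo:d0:1}--\eqref{teo:d0:2} with \eqref{teo:d1:1}--\eqref{teo:d1:2} "clear" and thereby glosses over the $L_n=n$ case where $L_n+1-n=1$ and the stated identities $\bar r(t)=r(-t)$, $\bar\varepsilon^n_j=\varepsilon^n_{-j}$ hold only after absorbing the $O(n^{-3})$ shift $r((1-j)/n)-r(-j/n)$ into the error term, exactly as you describe.
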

	\begin{proof}
		Observe that
		\begin{align*}
			\bar a^{n,0}_{j} - \bar a^{n,1}_{j+1} &= a^{n,0}_{L_n+1-n - j} - a^{n,1}_{L_n+2-n - (j+1)}= a^{n,0}_{L_n+1-n - j} - a^{n,1}_{L_n+1-n - j}, \qquad j = 1-n,\ldots,L_n,
		\end{align*}
		so that, defining $\bar r(t) := r(-t)$, $\bar \varepsilon^n_{j} := \varepsilon^n_{-j}$, the equivalence between \eqref{teo:d0:1}-\eqref{teo:d0:2} and \eqref{teo:d1:1}-\eqref{teo:d1:2} is clear.
		Then
		\begin{align*}
			\bar R(t) &= \int_{t}^{1} \bar r(s) ds = \int_{t}^{1} r(-s) ds \overset{[u = -s]}{=} \int_{-t}^{-1} - r(u) du =  \int_{-1}^{-t} r(u) du = R(-t),
		\end{align*}
		and
		\[
		\int_{-1}^{1} |\bar R(t)| dt = \int_{-1}^{1} \left| R(-t) \right| dt = \int_{-1}^{1} \left| R(t) \right| dt,
		\]
		thus, the equivalence between \eqref{teo:d0:3} and \eqref{teo:d1:3} also holds true.
		
		On the other hand, $S_\ba^{n}$ reproduces $\Pi_0$ if, and only if, $S_{\bar \ba^{n}}$ does. Hence, the finite difference scheme exists and can be computed with the formula \eqref{eq:difference_scheme}.
		\begin{align*}
			\bar q^{n,0}_{j} &= \sum_{l=1-n}^{j} a^{n,0}_{L_n+1-n - l} - a^{n,1}_{L_n+2-n - l}
			\overset{[k = L_n+1-n - l]}{=} \sum_{k=L_n+1-n - j}^{L_n} a^{n,0}_{k} - a^{n,1}_{k+1} = q^{n,1}_{L_n+1-n - j}.
		\end{align*}
		Hence,
		\[
			\sum_{j=1-n}^{L_n} |q^{n,1}_{j} | = \sum_{j=1-n}^{L_n} |q^{n,1}_{L_n+1-n - j} | = \sum_{j=1-n}^{L_n} |\bar q^{n,0}_{j}|.
		\]
		Since \( R(t) = \bar R(-t) \) and \( r(t) = \bar r(-t) \), we deduce that the formula to compute $n_0$, \eqref{eq:n0}, can be used here as well.
	\end{proof}
		
	The next result is a direct consequence of the previous one.
	\begin{corollary}\label{cor:d1}
		Let $\{S_{\ba^n}\}_{n=1}^\infty$ be, as in \eqref{eq:our_schemes}, that reproduce $\Pi_0$. Let $r:[-1,1]\to \R$ be a $\cC^1$ function and let $R(t) := \int_{t}^{1} r(s) ds$ be.
		If
		\begin{align*}
			a^{n,0}_{j} - a^{n,1}_{j+1} &= r(j/n)n^{-2} + \varepsilon^n_j, & j = 1-n,\ldots,L_n,\\
			|\varepsilon^n_j| &\leq \mu n^{-\alpha}, & j = 1-n, \ldots, L_n,\\
			\|R\|_1 & <1, &
		\end{align*}
		for some $\alpha > 2$, $\mu>0$, then there exists $n_{0}\in\N$ such that
		\[
		\|\bq^{n,1}\|_1 < 1, \quad \forall n \geq n_{0}.
		\]
		In case that $\alpha = 3$, $n_0$ can be obtained as in \eqref{eq:n0}.
	\end{corollary}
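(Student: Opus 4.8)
The plan is to derive Corollary \ref{cor:d1} as an immediate consequence of Theorem \ref{teo:d1} combined with the conclusion of Theorem \ref{teo:d0}, exactly in the spirit of the remark that precedes the statement. The key observation is that the hypotheses of Corollary \ref{cor:d1} are written in terms of the quantity $a^{n,0}_j - a^{n,1}_{j+1}$, which is precisely the combination appearing in the second line of \eqref{eq:difference_scheme} defining $q^{n,1}_j$, whereas Theorem \ref{teo:d0} is stated in terms of $a^{n,0}_j - a^{n,1}_j$, the combination defining $q^{n,0}_j$. Theorem \ref{teo:d1} is the bridge between these two: it shows that passing to the flipped family $\{S_{\bar\ba^n}\}$ turns the $q^{n,1}$-type differences of the original scheme into $q^{n,0}$-type differences of the flipped one, and that $\|\bq^{n,1}\|_1 = \|\bar\bq^{n,0}\|_1$.

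Concretely, first I would set $\bar r(t) := r(-t)$ and $\bar R(t) := \int_{-1}^t \bar r(s)\,ds$. A one-line change of variables gives $\bar R(t) = \int_{-1}^t r(-s)\,ds = \int_{-t}^{1} r(u)\,du = R(-t)$ with the $R$ of the corollary's statement (note the corollary uses $R(t) = \int_t^1 r(s)\,ds$, so indeed $\bar R(t) = R(-t)$), and hence $\|\bar R\|_1 = \|R\|_1 < 1$. Next I would observe, via the identity
\[
\bar a^{n,0}_{j} - \bar a^{n,1}_{j} = a^{n,0}_{L_n+1-n-j} - a^{n,1}_{L_n+1-n-j}
\]
established in the proof of Theorem \ref{teo:d1}, that the hypothesis $a^{n,0}_j - a^{n,1}_{j+1} = r(j/n)n^{-2} + \varepsilon^n_j$ of the corollary is exactly the statement that the flipped family satisfies \eqref{teo:d0:1} with $\bar r$ in place of $r$ and $\bar\varepsilon^n_j := \varepsilon^n_{-j}$ in place of $\varepsilon^n_j$; the uniform bound $|\bar\varepsilon^n_j| \le \mu n^{-\alpha}$ transfers verbatim. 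Therefore $\{S_{\bar\ba^n}\}$ satisfies all hypotheses of Theorem \ref{teo:d0}.

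Applying Theorem \ref{teo:d0} to $\{S_{\bar\ba^n}\}$ yields $\lim_{n\to\infty}\|\bar\bq^{n,0}\|_1 \le \|\bar R\|_1 = \|R\|_1 < 1$ and the existence of $n_0\in\N$ with $\|\bar\bq^{n,0}\|_1 < 1$ for all $n > n_0$; when $\alpha = 3$, the value of $n_0$ is given by formula \eqref{eq:n0} (using that $\bar r, \bar R$ have the same sup-norms as $r, R$ by the change of sign, as noted at the end of the proof of Theorem \ref{teo:d1}). Finally, Theorem \ref{teo:d1} gives $\|\bq^{n,1}\|_1 = \|\bar\bq^{n,0}\|_1$ for every $n$, so $\|\bq^{n,1}\|_1 < 1$ for all $n > n_0$ (equivalently $n \ge n_0 + 1$), which is the claim. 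There is no genuine obstacle here: the entire content is bookkeeping with the flip map and the index shift $j \mapsto L_n+1-n-j$, and the only point requiring a little care is keeping the two conventions for $R$ ($\int_{-1}^t$ versus $\int_t^1$) straight so that the sign reflection $\bar R(t) = R(-t)$ and the invariance $\|\bar R\|_1 = \|R\|_1$ come out correctly. Once that is checked, the corollary follows by a direct appeal to the two preceding theorems.
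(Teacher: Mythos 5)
Your proof is correct and follows exactly the paper's route: the paper's own proof is the one-line observation that, by Theorem \ref{teo:d1}, the flipped family satisfies the hypotheses of Theorem \ref{teo:d0}, and you have simply spelled out the bookkeeping. One cosmetic slip: your displayed identity should read $\bar a^{n,0}_{j} - \bar a^{n,1}_{j} = a^{n,0}_{L_n+1-n-j} - a^{n,1}_{L_n+2-n-j}$ (the $q^{1}$-type difference of the original family at index $L_n+1-n-j$), which is what your subsequent reasoning actually uses.
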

	\begin{proof}
		By the Theorem \ref{teo:d1}, the flipped version of this scheme fulfils Theorem \ref{teo:d0} and the claimed inequality is true.
	\end{proof}
	
	For odd-symmetric subdivision operators, due to Theorem \ref{teo:d1}, the satisfaction of the hypothesis of Theorem \ref{teo:d0} or Corollary \ref{cor:d1} is sufficient to ensure convergence.
	
	\begin{theorem} \label{teo:sym}
		Let $\{S_{\ba^n}\}_{n=1}^\infty$ be a set of odd-symmetric subdivision schemes fulfilling the hypothesis of Theorem \ref{teo:d0}. Then, the subdivision scheme $S_{\ba^n}$ is convergent if $n> n_0$ with $n_0$ as in \eqref{eq:n0}.
	\end{theorem}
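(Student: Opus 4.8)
The plan is to reduce everything to a bound on the operator norm of the difference scheme and then invoke Lemma \ref{lem:sum1}. Since the family $\{S_{\ba^n}\}$ satisfies the hypotheses of Theorem \ref{teo:d0}, each $S_{\ba^n}$ reproduces $\Pi_0$; hence, by Lemma \ref{lem:sum1}, the difference scheme $S_{\bq^n}$ is well defined and it suffices to prove $\|S_{\bq^n}\|_\infty < 1$ for $n > n_0$. By the lemma computing the norm of a subdivision operator, $\|S_{\bq^n}\|_\infty = \max\{\|\bq^{n,0}\|_1, \|\bq^{n,1}\|_1\}$, so the task splits into controlling the two sub-masks of $\bq^n$ separately.

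For the first sub-mask, Theorem \ref{teo:d0} applies verbatim: its conclusion \eqref{eq:sum_less_1} gives $\|\bq^{n,0}\|_1 < 1$ for all $n > n_0$, with the explicit threshold \eqref{eq:n0} in the case $\alpha = 3$. For the second sub-mask, I would exploit odd-symmetry: by Lemma \ref{lem:even_difference}, $S_{\bq^n}$ is even-symmetric, and in the normalization of Remark \ref{rmk:two_situations} (in which the odd rule is the longer one, as in \eqref{eq:our_schemes}) this is precisely the identity \eqref{eq:q_symmetry}, namely $q^{n,0}_j = q^{n,1}_{L_n+1-n-j}$ for $j = 1-n,\ldots,L_n$. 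The map $j \mapsto L_n+1-n-j$ is an involution of $\{1-n,\ldots,L_n\}$, so it pairs up the entries of $\bq^{n,0}$ with those of $\bq^{n,1}$ and yields $\|\bq^{n,1}\|_1 = \|\bq^{n,0}\|_1$. (Alternatively, one could observe that for an odd-symmetric scheme the flipped family of Theorem \ref{teo:d1} coincides with the original one, so that Theorem \ref{teo:d1} delivers the same equality; both routes are equivalent.)

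Combining the two steps, $\|S_{\bq^n}\|_\infty = \max\{\|\bq^{n,0}\|_1, \|\bq^{n,1}\|_1\} = \|\bq^{n,0}\|_1 < 1$ for every $n > n_0$, and Lemma \ref{lem:sum1} then gives that $S_{\ba^n}$ is (uniformly) convergent for such $n$. This is exactly the claimed statement.

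There is no serious analytic obstacle here — all the hard work is already done in Theorem \ref{teo:d0}. The only point that needs care is bookkeeping about the normalization: one must make sure the symmetry relation \eqref{eq:q_symmetry} is used in the same convention under which Theorem \ref{teo:d0} is stated (odd rule longer than or as long as the even rule), so that the index reflection genuinely maps the support of $\bq^{n,0}$ onto that of $\bq^{n,1}$ and the two $\ell^1$ norms agree exactly rather than merely up to a shift.
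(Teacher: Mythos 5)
Your proof is correct and follows essentially the same route the paper intends: Theorem \ref{teo:d0} controls $\|\bq^{n,0}\|_1$, the even-symmetry identity \eqref{eq:q_symmetry} (equivalently, Theorem \ref{teo:d1} applied to the flipped scheme, which coincides with the original by odd-symmetry) transfers the bound to $\|\bq^{n,1}\|_1$, and Lemma \ref{lem:sum1} concludes convergence. Your remark about keeping the normalization of Remark \ref{rmk:two_situations} consistent is exactly the right point of care.
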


	\section{WLPR-Subdivision schemes for $d=2,3$} \label{sec:d23}
	
	We consider $\{\lambda_n\}_{n\geq 2}$ such that $2n-1<\lambda_n<2n$, then $L_n = 1-n$. The following computations could be done for  $2n<\lambda_n<2n+1$ as well.
	First, we compute the coefficients of $S_{3,\bw^\lambda}$ (denote it by $S^n$ from now on).
	According to Lemma \ref{lem:eval_poly}, the sub-masks are \(\ba^{n,i} = \mathbf{W}^i \mathbf{X}^i \balpha^i\), $i=0,1$, where \(\balpha^i=((\mathbf{X}^i)^T\mathbf{W}^i\mathbf{X}^i)^{-1} \mathbf{e}_1\). Then, to compute $\alpha$ we may solve the system
	\[
	(\mathbf{X}^i)^T\mathbf{W}^i\mathbf{X}^i \balpha^i = \mathbf{e}_1.
	\]
	
	We start with $i=1$. Using \eqref{eq:XWX1} and the symmetry of $\bw^1$ and $\bx^1$,
	\[
	(\mathbf{X}^1)^T\mathbf{W}^1\mathbf{X}^1 =
	\left(\begin{array}{llllll}
		\|\bw^1\|_1 & 0 &  2\sum_{i=1}^{n} w^\lambda_{2i-1} (2i-1)^2 \\
		0 & 2\sum_{i=1}^{n} w^\lambda_{2i-1} (2i-1)^2 & 0 \\
		2\sum_{i=1}^{n} w^\lambda_{2i-1} (2i-1)^2 & 0 & 2\sum_{i=1}^{n} w^\lambda_{2i-1} (2i-1)^{4}
	\end{array}\right),
	\]
	\[
	\Delta^1 := \left|(\mathbf{X}^1)^T\mathbf{W}^1\mathbf{X}^1\right| = 4 \left(\|\bw^1\|_1\sum_{i=1}^{n} w^\lambda_{2i-1} (2i-1)^{4} - 2\left(\sum_{i=1}^{n} w^\lambda_{2i-1} (2i-1)^2\right)^2\right)\sum_{i=1}^{n} w^\lambda_{2i-1} (2i-1)^2.
	\]
	Hence, using the Kramer's formula, the three coefficients of $\balpha^1$ are:
	\begin{align*}
		\alpha^1_0 &= (\Delta^1)^{-1}
		\left|\begin{array}{llllll}
			1 & 0 &  2\sum_{l=1}^{n} w^\lambda_{2l-1} (2l-1)^2 \\
			0 & 2\sum_{l=1}^{n} w^\lambda_{2l-1} (2l-1)^2 & 0 \\
			0 & 0 & 2\sum_{l=1}^{n} w^\lambda_{2l-1} (2l-1)^{4}
		\end{array}\right| \\
		&= 4 (\Delta^1)^{-1} \left(\sum_{l=1}^{n} w^\lambda_{2l-1} (2l-1)^2\right)\left(\sum_{l=1}^{n} w^\lambda_{2l-1} (2l-1)^{4}\right) \\
		&= \frac{\sum_{l=1}^{n} w^\lambda_{2l-1} (2l-1)^{4}}{\|\bw^1\|_1\sum_{l=1}^{n} w^\lambda_{2l-1} (2l-1)^{4} - 2\left(\sum_{l=1}^{n} w^\lambda_{2l-1} (2l-1)^2\right)^2}\\
		&= \frac{\sum_{l=1}^{n} w^\lambda_{2l-1} (l-\frac12)^{4}}{\|\bw^1\|_1\sum_{l=1}^{n} w^\lambda_{2l-1} (l-\frac12)^{4} - 2\left(\sum_{l=1}^{n} w^\lambda_{2l-1} (l-\frac12)^2\right)^2},
	\end{align*}
	\begin{align*}
		\alpha^1_1 &= 0, \\
		\alpha^1_2 &= (\Delta^1)^{-1}
		\left|\begin{array}{llllll}
			\|\bw^1\|_1 & 0 &  1 \\
			0 & 2\sum_{l=1}^{n} w^\lambda_{2l-1} (2l-1)^2 & 0 \\
			2\sum_{l=1}^{n} w^\lambda_{2l-1} (2l-1)^2 & 0 & 0
		\end{array}\right| \\
		&= -4(\Delta^1)^{-1} \left(\sum_{l=1}^{n} w^\lambda_{2l-1} (2l-1)^2\right)^2\\
		&= -\frac{\sum_{l=1}^{n} w^\lambda_{2l-1} (2l-1)^2}{\|\bw^1\|_1\sum_{l=1}^{n} w^\lambda_{2l-1} (2l-1)^{4} - 2\left(\sum_{l=1}^{n} w^\lambda_{2l-1} (2l-1)^2\right)^2}\\
		&= -\frac14\frac{\sum_{l=1}^{n} w^\lambda_{2l-1} (l-\frac12)^2}{\|\bw^1\|_1\sum_{l=1}^{n} w^\lambda_{2l-1} (l-\frac12)^{4} - 2\left(\sum_{l=1}^{n} w^\lambda_{2l-1} (l-\frac12)^2\right)^2}.
	\end{align*}
	Then, by \eqref{eq:eval_poly2}, the sub-mask coefficients are
	\begin{align}
		a^{n,1}_j &= w^\lambda_{2j-1} (\alpha^1_0 + \alpha^1_2 (2j-1)^2)= w^\lambda_{2j-1} (\alpha^1_0 + 4\alpha^1_2 (j-\frac12)^2)\\
		&= w^\lambda_{2j-1} \frac{\sum_{l=1}^{n} w^\lambda_{2l-1} (l-\frac12)^{4} - (j-\frac12)^2\sum_{l=1}^{n} w^\lambda_{2l-1} (l-\frac12)^2}{\|\bw^1\|_1\sum_{l=1}^{n} w^\lambda_{2l-1} (l-\frac12)^{4} - 2\left(\sum_{l=1}^{n} w^\lambda_{2l-1} (l-\frac12)^2\right)^2},
		& j=1-n,\ldots,n. \label{eq:d23_1_coeffs}
	\end{align}
	Similarly,
	\begin{align}\label{eq:d23_0_coeffs}
		a^{n,0}_j &= w^\lambda_{2j} (\alpha^0_0 + 4\alpha^0_2 j^2)
		=w^\lambda_{2j} \frac{\sum_{l=1}^{n-1} w^\lambda_{2l} l^{4} - j^2\sum_{l=1}^{n-1} w^\lambda_{2l} l^2}{\|\bw^0\|_1\sum_{l=1}^{n-1} w^\lambda_{2l} l^{4} - 2\left(\sum_{l=1}^{n-1} w^\lambda_{2l} l^2\right)^2},
		&j=1-n,\ldots,n-1,
	\end{align}
	where
	\begin{align*}
		\alpha^0_0 &= \frac{\sum_{l=1}^{n-1} w^\lambda_{2l} l^{4}}{\|\bw^0\|_1\sum_{l=1}^{n-1} w^\lambda_{2l} l^{4} - 2\left(\sum_{l=1}^{n-1} w^\lambda_{2l} l^2\right)^2}, \quad
		\alpha^0_2 = -\frac14\frac{\sum_{l=1}^{n-1} w^\lambda_{2l} l^2}{\|\bw^1\|_1\sum_{l=1}^{n-1} w^\lambda_{2l} l^{4} - 2\left(\sum_{l=1}^{n-1} w^\lambda_{2l} l^2\right)^2}.
	\end{align*}
	
	We first prove convergence $\forall n\geq2$ in the simplest case, $\phi(x)=1$, in order to be used to the new convergence analysis tools, and later we discuss the general case.
	
	\subsection{Convergence of the subdivision schemes based on weighted least squares with $d=2,3$ and $\phi(x)=1$}
	
	In this case, $w_l=1$, $1-2n\leq l \leq 2n-1$, so that the mask coefficients can be simplified to
	\begin{align} \label{eq:def23}
		\begin{split}
			a^{n,0}_{j} &= -\frac{3 \left(5 j^2-3 n^2+3 n+1\right)}{8 n^3-12 n^2-2 n+3}, \qquad j = -n+1, \ldots, n-1, \\
			a^{n,1}_{j} &= \frac{15 (j-1) j-9 n^2+9}{8 n-8 n^3}, \qquad j = -n+1, \ldots, n.
		\end{split}
	\end{align}
	It can be easily checked that these operators are odd-symmetric, which for sure we knew by Lemma \ref{lem:symmetry}. Hence, to prove convergence we can apply Theorem \ref{teo:sym}.
	
	Observe that the algebraic expressions of $a^{n,0}_{j}$ and $a^{n,1}_{j}$ are well defined even for $j\in\R$. Then, for any $t\in[-1,1]$, we define
	\[
	r(t) := \lim_{n\to\infty} (a^{n,0}_{t n} - a^{n,1}_{t n})n^2 = -\frac{45 t^2}{16}-\frac{15 t}{8}+\frac{9}{16},
	\]
	that we obtained with the aid of a symbolic computation program. We also computed that
	\begin{align} \label{eq:eps} \begin{split}
			n^3 \varepsilon^n_j =& n^3(a^{n,0}_{j} - a^{n,1}_{j} - r(j/n)n^{-2}) = \\
			& 3 \rho(n)^{-1}(-120 j^2 n^4-120 j^2 n^3+225 j^2 n^2+30 j^2 n-45 j^2-80 j n^4+120 j n^3\\
			&+20 j n^2-30 j n+32 n^6-12 n^5-41 n^4+12 n^3+9 n^2),
		\end{split}
	\end{align}
	where
	\[ \rho(n) := 16 (n-1) n (n+1) (2 n-3) (2 n-1) (2 n+1).\]
	Now we should find $\mu$ such that $|\varepsilon^n_j n^{3}|\leq \mu $ for $1-n\leq j \leq n-1$. On the one hand,
	\[ \rho(n) > 16 (n-2)^3 (2n-4) (2n-4) (2n-4) = 128 (n-2)^6 \geq 0, \qquad \forall n\geq 2. \]
	On the other hand, the numerator of \eqref{eq:eps} can be easily bounded using that $|j|\leq n$ and increasing to 6 the degree of every monomial:
	\begin{align*}
		|\rho(n) n^3 \varepsilon^n_j/3| &\leq 120 n^6 + 120 n^6 +225 n^6+30 n^6 + 45 n^6 + 80 n^6+ 120 n^6\\
		&+20 n^6 +30 n^6+32 n^6 + 12 n^6 + 41 n^6+12 n^6+9 n^6\\
		&= 896 n^6.
	\end{align*}
	As conclusion,
	\[ |n^3 \varepsilon^n_j| \leq 3 \frac{896 n^6}{128 (n-2)^6} = \frac{21 n^6}{(n-2)^6}, \quad \forall n>2.\]
	Then, for any $n_1 \geq 3$,
	\begin{equation} \label{eq:n1_bound}
		|\varepsilon^n_j| \leq n^{-3} \mu_1, \quad \mu_1 = \frac{21 n_1^6}{(n_1-2)^6}, \qquad \forall n \geq n_1.
	\end{equation}
	
	To compute $n_0$, it is also necessary to compute:
	\begin{align*}
		\|R\|_1 &= \int_{-1}^1 \left| \int_{-1}^t r(s) ds \right| dt = \frac{1}{10} \left(3 \sqrt{15}-5\right) \simeq 0.661895, \\
		\|r\|_\infty &= \max_{t\in[-1,1]} |r(t)| = 33/8, \quad \|r'\|_\infty = \max_{t\in[-1,1]} |r'(t)| = 15/2.
	\end{align*}
	Now, using formula \eqref{eq:n0} (case $L_n = n-1$) for $\mu = \mu_1$,
	\[
	n_0 = \frac{1}{6} \left(\sqrt{15}+5\right) \left(\frac{42 n_1^6}{(n_1-2)^6}+\sqrt{\left(\frac{42 n_1^6}{(n_1-2)^6}+\frac{153}{8}\right)^2+\frac{6}{5} \left(\sqrt{15}-5\right) \left(\frac{21 n_1^6}{(n_1-2)^6}+\frac{15}{2}\right)}+\frac{153}{8}\right).
	\]
	It is desirable to prove convergence for as much values of $n$ as possible, so $n_1$ should be chosen such that $n_0$ is as small as possible, but greater or equal than $n_1$, due to \eqref{eq:n1_bound}. We computationally found that the compromise is achieved for $n_1 = 188$, leading to $n_0 \simeq 188.506$. Hence, according to Theorem \ref{teo:sym}, the subdivision schemes are convergent for $n \geq 189$. For smaller values of $n$, we have computationally checked that
	\[
	\|\bq^{n,0}\|_1  = \|\bq^{n,1}\|_1 \leq 29/42 \simeq 0.690476, \qquad \forall 2 \leq n \leq 189.
	\]
	This symbolic computation is quick and without rounding errors, so this can be considered a rigorous proof of the convergence.
	
	We can perform some additional computations in order to provide an upper bound of $\|\bq^{n,0}\|_1  = \|\bq^{n,1}\|_1$ valid for any $n\geq 2$.
	According to \eqref{eq:bound-of-convergence},
	\[ \|\bq^{n,0}\|_1  \leq \frac1{10}(3 \sqrt{15}-5) + n^{-1} 33/8 + (2n-1)n^{-2} 15/2+ (2n^{-1}-n^{-2}) 21 \left (\frac{94}{93}\right )^6, \quad \forall n \geq 189. \]
	We checked that the right side is less than $29/42$ for any $n \geq 2236$, and we explicitly computed that for $n \leq 2236$, $\|\bq^{n,0}\|_1  \leq 29/42$. As conclusion,
	\[ \|\bq^{n,0}\|_1  = \|\bq^{n,1}\|_1 \leq 29/42, \qquad \forall n\geq 2, \]
	and the equality is reached only for $n=4$.
	
	We tried to prove $\cC^1$ regularity with this technique by applying the results to the divided difference schemes, $S_{2 \bq^n}$, but they do not satisfy \eqref{teo:d0:1}.
	
	\subsection{Convergence of the subdivision schemes based on weighted least squares with $d=2,3$ and a general function $\phi(x)$} \label{sec:conv_d3}
	In this situation, we will study the convergence only for large $n$ values, so that we will not calculate $n_0$, because we have not been able to perform the \emph{direct inspection} without specifying $\phi$.
	In order to compute
	\(r(t) := \lim_{n\to\infty} (a^{n,0}_{t n} - a^{n,1}_{t n})n^2,\)
	we will define a $\cC^1$ function $U^{n}_j$ such that \(a^{n,i}_{j} = U^{n}_j(1-i/2)\), $i=0,1$,
	which will allow to write
	\[a^{n,0}_{tn} - a^{n,1}_{tn} = U^{n}_{tn}(1) - U^{n}_{tn}(1/2) = \frac12(U^{n}_{tn})'(\xi_{t,n}), \quad \xi_{t,n}\in(1/2,1).\]
	For that purpose, we define \(\sigma_{\lambda_n}(\phi,x,k) := \sum_{l=1}^{n} \phi(\frac{l-x}{{\lambda_n}/2}) (l-x)^{k}\), $k\in\N$, $x\in[1/2,1]$.
	Recall that $w^{\lambda_n}_0 = 1$, $w_l^{\lambda_n} =\omega\left(\frac {l}{\lambda_n}\right)$ and $\omega(x) = \phi(|x|)$. Observe that the sub-masks  \eqref{eq:d23_1_coeffs} and \eqref{eq:d23_0_coeffs} can be expressed as
	\begin{align*}
		a^{n,1}_j &= \phi\left (\frac{j-1/2}{{\lambda_n}/2}\right ) \frac{\sigma_{\lambda_n}(\phi,1/2,4) - (j-\frac12)^2\sigma_{\lambda_n}(\phi,1/2,2)}{\|\bw^1\|_1\sigma_{\lambda_n}(\phi,1/2,4) - 2\sigma_{\lambda_n}(\phi,1/2,2)^2} = U^{n}_j(1/2),\\
		a^{n,0}_j &=w^{\lambda_n}_{2j} \frac{\sum_{l=1}^{n} w^{\lambda_n}_{2(l-1)} (l-1)^{4} - j^2\sum_{l=1}^{n} w^{\lambda_n}_{2(l-1)} (l-1)^2}{\|\bw^0\|_1\sum_{l=1}^{n} w^{\lambda_n}_{2(l-1)} (l-1)^{4} - 2\left(\sum_{l=1}^{n} w^{\lambda_n}_{2(l-1)} (l-1)^2\right)^2}\\
		&= \phi\left (\frac{j-0}{{\lambda_n}/2}\right ) \frac{\sigma_{\lambda_n}(\phi,1,4) - (j-0)^2\sigma_{\lambda_n}(\phi,1,2)}{\|\bw^0\|_1\sigma_{\lambda_n}(\phi,1,4)) - 2\sigma_{\lambda_n}(\phi,1,2)^2} = U^{n}_j(1).
	\end{align*}
	Thus, we may define the link function as
	\[
	U^{n}_j(x) := \phi\left (\frac{j+x-1}{{\lambda_n}/2}\right ) \frac{\sigma_{\lambda_n}(\phi,x,4) - (j+x-1)^2\sigma_{\lambda_n}(\phi,x,2)}{(\|\bw^1\|_1 + (2x-1)(\|\bw^0\|_1-\|\bw^1\|_1))\sigma_{\lambda_n}(\phi,x,4) - 2\sigma_{\lambda_n}(\phi,x,2)^2}.
	\]
	Observe that $U^{n}_j\in\cC^1([1/2,1])$ provided that $\phi\in\cC^1((0,1))$ ($\phi'$ may not exist at 0 or 1).
	To follow more easily the next computations, we write \(U^{n}_j(x) = \phi(\frac{j+x-1}{{\lambda_n}/2})U_{\text{num}}(x)/U_{\text{den}}(x)\), where \(U_{\text{num}}(x),U_{\text{den}}(x)\) are the numerator and denominator that appear in the last formula.
	
	Taking into account that \[\frac{\partial}{\partial x} \sigma_{\lambda_n}(\phi,x,k) = -\frac2{\lambda_n} \sigma_{\lambda_n}(\phi',x,k) - k\sigma_{\lambda_n}(\phi,x,k-1), \qquad k>1,\]
	we proceed to compute the derivative.
	\begin{align*}
		(U^{n}_j)^\prime(x) &= \frac2{\lambda_n}\phi'\left (\frac{j+x-1}{{\lambda_n}/2}\right ) \frac{U_{\text{num}}(x)}{U_{\text{den}}(x)}  + \phi\left (\frac{j+x-1}{{\lambda_n}/2}\right )\frac{U_{\text{num}}'(x)}{U_{\text{den}}(x)}  - \phi\left (\frac{j+x-1}{{\lambda_n}/2}\right )\frac{U_{\text{num}}(x)U_{\text{den}}'(x)}{U_{\text{den}}^2(x)} ,
	\end{align*}
	where
	\begin{align*}
		U_{\text{num}}'(x) =& -\frac2{\lambda_n} \sigma_{\lambda_n}(\phi',x,4) - 4\sigma_{\lambda_n}(\phi,x,3) - 2(j+x-1)\sigma_{\lambda_n}(\phi,x,2) \\
		&- (j+x-1)^2\left (-\frac2{\lambda_n} \sigma_{\lambda_n}(\phi',x,2) - 2\sigma_{\lambda_n}(\phi,x,1)\right ), \\
		U_{\text{den}}'(x) =&
		2(\|\bw^0\|_1-\|\bw^1\|_1)\sigma_{\lambda_n}(\phi,x,4)+(\|\bw^1\|_1 + (2x-1)(\|\bw^0\|_1-\|\bw^1\|_1))\left (-\frac2{\lambda_n} \sigma_{\lambda_n}(\phi',x,4) - 4\sigma_{\lambda_n}(\phi,x,3)\right ) \\
		& - 4\sigma_{\lambda_n}(\phi,x,2)\left (-\frac2{\lambda_n} \sigma_{\lambda_n}(\phi',x,2) - 2\sigma_{\lambda_n}(\phi,x,1)\right ).
	\end{align*}
	
	Finally, we proceed to compute
	\(r(t) = \lim_{n\to\infty} 	\frac{n^2}2 (U^{n}_{tn})'(\xi_{t,n}).\)
	To this purpose, we define
\begin{equation}\label{funcionalI}						
I_k(\phi) := \int_0^1 \phi(x) x^k dx, \quad k\in\N,
\end{equation}
	we observe $\lim_{n\to\infty}{2n}/{\lambda_n} = 1$ and we use the following composite integration rule
	\begin{align*}
		n^{-k-1} \sigma_{\lambda_n}(\phi,x,k)
		&=n^{-1}\sum_{l=1}^n \phi\left (\frac{l-x}{n}\frac{2n}{{\lambda_n}}\right )\left (\frac{l-x}n\right )^k = I_k(\phi) + \cO(n^{-1}), \qquad \forall k\in\N\cup\{0\}, \quad \forall x\in[\frac12,1].
	\end{align*}
	Defining \(\sigma_{\lambda_n}(\phi,x,0) := \sum_{l=1}^{n} \phi(\frac{l-x}{{\lambda_n}/2})\), so that \(\frac{\partial \sigma_{\lambda_n}}{\partial x}(\phi,x,0) = -\frac2{\lambda_n}\sigma_{\lambda_n}(\phi',x,0)\), we note that
	\[ n^{-1}\|\bw^1\|_1 = 2n^{-1}\sum_{l=1}^{n} \phi(\frac{l-1/2}{{\lambda_n}/2}) = 2n^{-1}\sigma_{\lambda_n}(\phi,1/2,0) = 2I_0(\phi)+ \cO(n^{-1}), \quad i=0,1, \] and
	\[
	\sigma_{\lambda_n}(\phi,1,0)-\sigma_{\lambda_n}(\phi,1/2,0)
	=  \frac12\frac{\partial \sigma_{\lambda_n}}{\partial x}(\phi,\xi_n,0) = - \frac12 \frac{2}{\lambda_n} \sigma_{\lambda_n}(\phi',\xi_n,0)
	= -\frac12 \int_0^1 \phi'(x)dx + \cO(n^{-1}) = \frac12(\phi(0) - \phi(1))+ \cO(n^{-1}),
	\]
	so that
	\[	\|\bw^0\|_1-\|\bw^1\|_0 = 2\sigma_{\lambda_n}(\phi,1/2,1)-\phi(0)-2\sigma_{\lambda_n}(\phi,1/2,0) = -\phi(1) + \cO(n^{-1}).\]
	Taking these comments into account and taking $j = t n$, we find out that
	\begin{align*}
		\lim_{n\to\infty}& \phi\left (\frac{t n+\xi_{t,n}-1}{{\lambda_n}/2}\right ) = \phi(t),  \quad\lim_{n\to\infty} \phi'\left (\frac{t n+\xi_{t,n}-1}{{\lambda_n}/2}\right ) = \phi'(t),\\
		\|\bw^1\|_1 &+ (2x-1)(\|\bw^0\|_1-\|\bw^1\|_1) = 2nI_0(\phi)+ \cO(n^{0}) + (2x-1)(-\phi(1) + \cO(n^{-1})) = 2nI_0(\phi)+ \cO(n^{0})\\
		U_{\text{num}}(x) &= n^5 I_4(\phi)  - t^2 n^5 I_2(\phi) + \cO(n^{4}),\\
		U_{\text{den}}(x)
&= 2n^6 I_0(\phi)I_4(\phi) - 2n^6 I_2(\phi)^2 + \cO(n^4),\\
		U_{\text{num}}'(x) &= -n^4 I_4(\phi') - 4n^4 I_3(\phi) - 2tn^4 I_2(\phi) - t^2 n^2(-n^2 I_2(\phi') - 2n^2 I_1(\phi)) + \cO(n^3),\\
		U_{\text{den}}'(x) &= 2 n^5 (-\phi(1)) I_4(\phi)+2nI_0(\phi)(-n^4 I_4(\phi') - 4n^4 I_3(\phi)) - 4n^3 I_2(\phi)(-n^2 I_2(\phi') - 2n^2 I_1(\phi))  + \cO(n^{4}).
	\end{align*}
	Hence,
	\begin{align*}
		r(t) &= \lim_{n\to\infty} \frac12n^2(U^{n}_{tn})^\prime(\xi_{t,n}) = \frac12\phi'(t)\lim_{n\to\infty}n \frac{n^5}{n^6} \frac{I_4(\phi)  - t^2 I_2(\phi)}{2I_0(\phi)I_4(\phi) - 2I_2(\phi)^2} \\
		&+ \frac12\phi(t)\lim_{n\to\infty}n^2\frac{n^4}{n^6} \frac{-I_4(\phi') - 4I_3(\phi) - 2tI_2(\phi) - t^2 (- I_2(\phi') - 2 I_1(\phi))}{2I_0(\phi) I_4(\phi) - 2 I_2(\phi)^2} \\
		&- \frac12\phi(t)\lim_{n\to\infty}n^2 n^5 (I_4(\phi) - t^2 I_2(\phi)) \cdot \frac{n^5}{n^{12}} \frac{
			2(-\phi(1)) I_4(\phi) + 2I_0(\phi)(-I_4(\phi') - 4I_3(\phi)) - 4 I_2(\phi)(- I_2(\phi') - 2 I_1(\phi))
		}{(2I_0(\phi) I_4(\phi) - 2I_2(\phi)^2)^2}\\
		&= \frac14\phi'(t)\frac{I_4(\phi)  - t^2 I_2(\phi)}{I_0(\phi)I_4(\phi) - I_2(\phi)^2}
		- \frac14\phi(t)\frac{I_4(\phi') + 4I_3(\phi) + 2tI_2(\phi) - t^2 ( I_2(\phi') + 2 I_1(\phi))}{I_0(\phi) I_4(\phi) - I_2(\phi)^2} \\
		&- \frac14\phi(t) (I_4(\phi) - t^2 I_2(\phi))  \frac{
			(-\phi(1))I_4(\phi) - I_0(\phi)(I_4(\phi') + 4I_3(\phi)) + 2 I_2(\phi)( I_2(\phi') + 2 I_1(\phi))
		}{(I_0(\phi) I_4(\phi) - I_2(\phi)^2)^2}.
	\end{align*}
	Clearly, the former expression is valid provided that $I_0(\phi) I_4(\phi) - I_2(\phi)^2 \neq 0$. Fortunately, we can use the Schwartz's inequality for the inner product $\langle f,g \rangle := \int_0^1 f(x) g(x) \phi(x)dx$ to deduced that
	\[I_0(\phi) I_4(\phi) - I_2(\phi)^2 = \langle1,1\rangle \langle x^2,x^2\rangle - \langle1,x^2\rangle^2 > 0.\]
	
	We gather in Table \ref{tabla2convergencia} the computation of $r(t)$ and $\|R\|_1$ for several choices of $\phi$. Since $\|R\|_1<1$ for all of them, we conclude that, for $n$ large enough, any of the corresponding subdivision schemes converge. We realized that the value of $\|R\|_1$ could be greater than one for some extreme choices of $\phi$. An example is $\phi(x)=1+1000x^2$, but thus kind of functions were discarded in Section \ref{section3} due to its practical meaning.
	
	\begin{table}[!h]
		\begin{equation*}
			\arraycolsep=3.5pt\def\arraystretch{1.5}
			\begin{array}{lll}
				\hline
				\phi(x) & r(t) & \|R\|_1 \\\hline
				1				& -\frac{45 t^2}{16}-\frac{15 t}{8}+\frac{9}{16} & \frac{1}{10} \left(3 \sqrt{15}-5\right) \simeq 0.661895\\
				1-x			& \frac{45 t | t| }{7}-\frac{6 \text{sgn}(t)}{7}-\frac{30 t}{7} & \frac{1}{70} \left(32 \sqrt{10}-59\right) \simeq 0.602756 \\
				1-x^2			& \frac{105 t^3}{16}-\frac{75 t}{16}	& \frac{12 \sqrt{\frac{3}{7}}}{7}-\frac{1}{2} \simeq 0.622263\\
				(1-x^2)^2		& -\frac{945 t^5}{64}+\frac{735 t^3}{32}-\frac{525 t}{64} & \frac{1}{36} \left(23 \sqrt{3}-18\right) \simeq 0.606588\\
				(1-x^3)^3		& \frac{889350 t | t| ^9}{32099}-\frac{229635 t | t| ^7}{32099}-\frac{1940400 t | t| ^6}{32099}+\frac{459270 t | t| ^4}{32099}\\
				&+\frac{1212750 t | t| ^3}{32099}-\frac{229635 t | t| }{32099}-\frac{161700 t}{32099} & \frac{30889956929079600 \sqrt{2310}-719074739206009649}{1241215209364900000} \simeq 0.616793\\
				(1-x^2)^3		& \frac{3465 t^7}{128}-\frac{8505 t^5}{128}+\frac{6615 t^3}{128}-\frac{1575 t}{128} & \frac{2799 \sqrt{\frac{3}{11}}}{1331}-\frac{1}{2} \simeq 0.598219\\
				(1-x^p)^q& \text{(large expression involving $\Gamma$ function)}\\
				e^{-x}& \frac{e^{1-| t| } \left((e (20 e-69)+40) \left((2 e-5) t^2-24 e+65\right) \text{sgn}(t)-2 (e-4) (11 e-30) t^2\right)}{4 (e (20 e-69)+40)^2} &  \\
				& +\frac{e^{1-| t| } \left(-2 (2 e-5) (e (20 e-69)+40) t+4 (30-11 e)^2\right)}{4 (e (20 e-69)+40)^2} & \sim 0.621749 \\
				e^{-10x}& \text{(explicit but large expression)} & \sim 0.529404 \\
				e^{-\xi x}& \text{(explicit but large expression)}\\
				1+1000x^2& -\frac{23822324150625 t^4}{734488968098}-\frac{15776250 t^3}{606007}+\frac{81269240847795 t^2}{5875911744784}\\
				&+\frac{44999895 t}{4848056}+\frac{81459819441}{5875911744784} & \sim 1.00621 \\
				\hline
			\end{array}
		\end{equation*}\caption{The function $r(t)$ and the value $\|R\|_1$ of Theorem \ref{teo:d0} for $S_{3,\bw^\lambda}$ several choices of $\phi$ and $2n-1<\lambda_n<2n$ .}
		\label{tabla2convergencia}
	\end{table}
		
	The next two sections are devoted to study the approximation and the noise suppression capability depending on the chosen weight function

\section{Approximation capability}\label{approx}

To study the approximation capability, we consider the subdivision scheme $S_{d,\mathbf{w^\lambda}}$ defined in \eqref{esquemasubdivision} with $d\geq 0$ and $\lambda$ satisfying the conditions requested in Proposition \ref{propnumero}. Let $F \in \mathcal{C}^{d+2}$ be and consider the initial data $\mathbf{f}^h=\{f^{h}_j\}_{j\in\mathbb{Z}}$ with $h>0$ and
$$f^{h}_{j}=F\left(j h\right),\quad j\in\mathbb{Z}.$$
Let $j_0\in\mathbb{Z}$ be any integer, we calculate the approximation error between  $(S_{d,\mathbf{w^\lambda}}\mathbf{f}^h)_{2j_0+i}$ and $F((j_0+i/2)h)$, with $i=0,1$, and analyse the largest contribution term.
By Taylor's theorem, we have that there exist $p_{i}\in\Pi_d$ such that:
\[f^{h}_{j}=F(jh)=p_i(jh)+\frac{F^{(d+1)}((j_0+i/2)h)}{(d+1)!}(j-(j_0+i/2))^{d+1}h^{d+1}+\cO(h^{d+2}). \]
Applying the subdivision operator and considering its polynomial reproduction capability,
\begin{equation}
\begin{split}
(S_{d,\mathbf{w^\lambda}}\mathbf{f}^h)_{2j_0+i}&=\sum_{l=1-n}^{L_n+i} a^{i}_{l} f^h_{j_0+l}=\sum_{l=1-n}^{L_n+i} a^{i}_{l}\left(p_i((j_0+l)h)+\frac{F^{(d+1)}((j_0+i/2)h)}{(d+1)!}(l-i/2)^{d+1}h^{d+1}+\cO(h^{d+2})\right)\\
&=\sum_{l=1-n}^{L_n+i} a^{i}_{l}p_i((j_0+l)h)+\frac{F^{(d+1)}((j_0+i/2)h)}{(d+1)!}h^{d+1}\sum_{l=1-n}^{L_n+i} a^{i}_{l}(l-i/2)^{d+1}+\cO(n h^{d+2})\\
&=p_i((j_0+i/2)h)+\frac{F^{(d+1)}((j_0+i/2)h)}{(d+1)!}h^{d+1}\sum_{l=1-n}^{L_n+i} a^{i}_{l}(l-i/2)^{d+1}+\cO(n h^{d+2})\\
&=F((j_0+i/2)h)+\frac{F^{(d+1)}((j_0+i/2)h)}{(d+1)!}h^{d+1}\sum_{l=1-n}^{L_n+i} a^{i}_{l}(l-i/2)^{d+1}+\mathcal{O}(n h^{d+2})
\end{split}
\end{equation}
Therefore, the largest contribution to the approximation error is given by
		\[
\frac{F^{(d+1)}((j_0+i/2)h)}{(d+1)!}h^{d+1}\sum_{l=1-n}^{L_n+i} a^{i}_{l} (l-i/2)^{d+1}.
		\]
We conclude that if two linear schemes are given, with the same approximation order, then the scheme with lesser value of
\begin{equation}\label{etavalue}
	\eta = \max\left\{\sum_{l=1-n}^{L_n} a^0_l l^{d+1},\sum_{l=1-n}^{L_n+1} a^1_l (l - \frac12)^{d+1}\right\}
\end{equation}
provides better approximators, in general. We observe that, if $a^i_l = n^{-1}H(l/n) + \cO(n^{-2})\approx n^{-1}H(l/n)$, for some function $H$, $i=0,1$, (in that case, $H(t):= \lim_{n\to\infty} n a^i_{t n}$), then
		\[
		\sum_{l=1-n}^{L_n+i} a^i_l (l - i/2)^{d+1}
		= n^{-1} \sum_{l=1-n}^{L_n+i} H(l/n) (l - i/2)^{d+1}
		= n^{d} \sum_{l=1-n}^{L_n+i} H(l/n) (l/n - \frac{i}{2n})^{d+1} = n^{d+1} \int_{-1}^1 t^{d+1} H(t) dt + \cO(n^{d}).
		\]
		Since the proposed schemes are odd-symmetric, then $H(t)=H(-t)$ and $\int_{-1}^1 t^{d+1} H(t) dt = 2I_{d+1}(H)$ and the approximation error is given by
		\begin{equation} \label{eq:aprox_error}
		2h^{d+1} n^{d+1}I_{d+1}(H)\frac{F^{(d+1)}((j_0+i/2)h)}{(d+1)!} + \cO(n h^{d+2})+ \cO(n^{d}h^{d+1}),
		\end{equation}
		which increases with $h,n$ and $I_{d+1}(H)$. We will test this formula in Section \ref{sec:lambda_increase}.

Now, we explore how the selection of $\phi$ influences $H$, with the aim of determining which $\phi$ is the best from an approximation point of view.

For $d=0,1$, it is easy to compute $H$ from the expression of $\ba^0,\ba^1$ in \eqref{eq:definition_d1_1} and \eqref{eq:definition_d1_2}. For instance, for $2n-1<\lambda<2n$,
\[
H(t) = \lim_{n\to\infty} n a^i_{t n} = \lim_{n\to\infty} n  \frac{\phi(|2 t n+i|/\lambda)}{\sum_{j=1-n}^{L_n}\phi(|2j+i|/\lambda)}
= \lim_{n\to\infty} n  \frac{\phi(|2 t n+i|/\lambda)}{2 n \int_0^1 \phi(t) dt + \cO(1)}
= \frac{\phi(|t|)}{2I_0(\phi)}.
\]
Hence, $2I_2(H)=I_2(\phi)/I_0(\phi)$. In Table \ref{tab:noise1}, we see that the smallest values are reached for $\phi(x) = e^{-\xi x}$ with large $\xi$ and for $\phi(x) = (1-x^p)^q$ with large $q$ or small $p$. We add for comparison $\|H\|_2^2$, that according to Section \ref{noise}, the smaller it is, the greater is its noise reduction capability. We can see for any scheme that the greater is the approximation capability, the smaller is the noise reduction capability. As conclusion, approximation and noise reduction are incompatible, in this sense, and some equilibrium may be found. This is further discussed in Section \ref{sec:pareto}.

For $d=2,3$, using the results in Section \ref{sec:conv_d3}:
\[
H(t) = \phi(|t|)\frac12\frac{I_4(\phi)-t^2 I_2(\phi)}{I_0(\phi)I_4(\phi)-I_2(\phi)^2}.
\]
Then, $2I_4(H)=-(I_2(\phi)I_6(\phi) - I_4(\phi)^2)/(I_0(\phi)I_4(\phi)-I_2(\phi)^2)$. The same conclusion can be obtain as in the case $d=0,1$ from Table \ref{tab:approx3}: The smallest values are reached for $\phi(x) = e^{-\xi x}$ with large $\xi$ and for $\phi(x) = (1-x^p)^q$ with large $q$ or small $p$. A great approximation power implies a low noise reduction capability, which will be studied in Section \ref{sec:pareto}.

\section{Noise reduction}\label{noise}
	
	In this section, we study the application of a subdivision operator to purely noisy data, $S_\ba \bepsilon$ where all the values $\epsilon_j$ follows a random distribution $E$, and are mutually uncorrelated. The results of this study can be applied to any data contaminated with noise due to Remark \ref{rmk:separation}.

	A direct result is that
	\[ \|S_\ba \bepsilon\|_\infty \leq  \|S_\ba\|_\infty \|\bepsilon\|_\infty. \]
	Since \(\|S_\ba\|_\infty \geq 1\) for any convergent schemes, the best condition is reached for $d=0,1$, for which $\|S_{d,\bw^\lambda}\|_\infty = 1$, since the mask is positive. Hence, it cannot be concluded from this formula that the noise is reduced.	

\begin{table}[!h]
	\begin{equation*}
		\arraycolsep=2.5pt\def\arraystretch{1.5}
		\begin{array}{llll}
			\hline
			\phi(x)  		& H(t)	&  2|I_2(H)|	& \|H\|_2^2 \\
			1				& \frac12	& \frac13 \simeq 0.333333 & \frac12 = 0.5 \\
			1-x 			& 1-|t| & \frac16 \simeq 0.166667 & \frac{2}{3} \simeq 0.666667\\
			1-x^2			& \frac{3}{4} \left(1-t^2\right)	& \frac15 = 0.2	& \frac35 = 0.6\\
			(1-x^2)^2		& \frac{15}{16} \left(1-t^2\right)^2 & \frac{1}{7} \simeq 0.142857 &\frac{5}{7} \simeq 0.714286\\
			(1-x^3)^3		& \frac{70}{81} \left(1-|t|^3\right)^3 &\frac{35}{243}\simeq 0.144033	& \frac{175}{247} \simeq 0.708502\\
			(1-x^2)^3		& \frac{35}{32} \left(1-t^2\right)^3 & \frac19 \simeq 0.111111	& \frac{350}{429} \simeq 0.815851\\
			(1-x^p)^q 		& \frac{\left(1-t^p\right)^q \Gamma \left(q+\frac{1}{p}+1\right)}{2 \Gamma \left(1+\frac{1}{p}\right) \Gamma (q+1)} & \frac{\Gamma \left(\frac{p+3}{p}\right) \Gamma \left(q+\frac{1}{p}+1\right)}{3 \Gamma \left(1+\frac{1}{p}\right) \Gamma \left(q+\frac{3}{p}+1\right)} & \frac{(2 q)! \Gamma \left(q+\frac{1}{p}+1\right)^2}{2 (q!)^2 \Gamma \left(1+\frac{1}{p}\right) \Gamma \left(2 q+\frac{1}{p}+1\right)} \\
							& & \text{Increases with $p$. Decreases with $q$.} & \text{Decreases with $p$. Increases with $q$.}\\
							& & \lim_{p\to+\infty}2|I_2(H)| = \frac13,\lim_{q\to+\infty}2|I_2(H)| = 0 & \lim_{q\to0}\|H\|_2^2 = \frac12, \lim_{q\to+\infty}\|H\|_2^2 = +\infty\\
			e^{-\xi x}		& \frac{\xi  e^{\xi  t}}{2 \left(e^{\xi }-1\right)} & \frac{2-\frac{\xi (\xi+2)}{e^\xi-1}}{\xi^2} &  \frac{1}{4} \xi  \coth \left(\frac{\xi }{2}\right) \\
							&  & \text{Decreases with $\xi$} &  \text{Increases with $\xi$} \\
			&  & \lim_{\xi\to0}2|I_2(H)| = \frac13, \lim_{\xi\to +\infty}2|I_2(H)| = 0 & \lim_{\xi\to0}\|H\|_2^2 = \frac12, \lim_{\xi\to +\infty}\|H\|_2^2 = +\infty \\
			\hline
		\end{array}
	\end{equation*}
	\caption{Computations of $H$, $2|I_2(H)| = |\int_{-1}^1 t^2 H(t) dt|$ and $\|H\|_2^2$ for several choices of $\phi$, $d=0,1$.} \label{tab:noise1}
\end{table}

\begin{table}[!h]
			\begin{equation*}
				\arraycolsep=2.5pt\def\arraystretch{1.5}
				\begin{array}{llll}
					\hline
					\phi(x)  		& H(t)	&  2|I_4(H) |                                                                                       & \|H\|_2^2                              \\\hline
					1				& \frac{3}{8} \left(3-5 t^2\right)	& \frac{3}{35} \simeq 0.0857143                                         & \frac98 \simeq 1.125                   \\
					1-x 			& \frac{6}{7} \left(5 t^2-2\right) (| t| -1) & \frac{19}{490} \simeq 0.0387755                              & \frac{456}{343} \simeq 1.32945         \\
					1-x^2			& \frac{15}{32} \left(t^2-1\right) \left(7 t^2-3\right)		& \frac{1}{21} \simeq 0.047619                  & \frac54 \simeq 1.25                    \\
					(1-x^2)^2		& \frac{-105}{64} \left(t^2-1\right)^2 \left(3 t^2-1\right) & \frac{1}{33} \simeq 0.030303                  & \frac{805}{572} \simeq 1.40734         \\
	(1-x^3)^3		& \frac{210 \left(770 t^2-243\right) \left(| t| ^3-1\right)^3}{32099}	& \frac{130734}{4590157} \simeq 0.0284814           & \frac{141820427509425}{99507706423177}\simeq 1.42522 \\
					(1-x^2)^3		& \frac{315}{512} \left(t^2-1\right)^3 \left(11 t^2-3\right)	& \frac{3}{143} \simeq 0.020979             & \frac{3780}{2431} \simeq 1.55492 \\
					(1-x^p)^q		& \text{(large formula}  & \text{Increases with $p$, decreases with $q$}            &  \text{Decreases with $p$, increases with $q$}\\
& \text{with gamma function)} & 0 \leq |I_d(H) | \leq \frac{3}{35} & \lim_{q\to +\infty}|I_d(H) | = +\infty, \\
					&  &
 & \lim_{p\to +\infty}|I_d(H) | = \lim_{q\to 0}|I_d(H) | = \frac98 \\ 										e^{-\xi x}		& \text{(explicit but large formula)} & \text{Decreases with $\xi$ and}  & \lim_{\xi\to0}\|H\|_2^2 = \frac98\\
					&  &
					0 \leq |I_d(H) | \leq \frac{3}{35} & \lim_{\xi\to +\infty}\|H\|_2^2 = +\infty
					\\
					\hline
				\end{array}
			\end{equation*}
			\caption{Computations of $H$, $2|I_4(H) | = |\int_{-1}^1 t^4 H(t) dt|$ and  $\|H\|_2^2$ for several choices of $\phi$, $d=2,3$.} \label{tab:approx3}
		\end{table}

	To reveal the denoising capabilities, a basic statistical analysis can be carried out. If the variance of the refined data is lesser than the variance of the given data, $\var(E)$, it indicates a reduction of randomness. Using that
	\[\var(\alpha X + \beta Y) = \alpha^2 \var(X) + \beta^2 \var(Y), \qquad \alpha,\beta\in\R,\]
	provided that $X,Y$ are two uncorrelated random distributions, the variance after one subdivision step is
	\[
	\var\left(\sum_{l\in \Z} a_{2l+i}E\right) = \sum_{l\in \Z} a_{2l+i}^2\var\left(E\right) = \|\ba^i\|_ 2^2\var\left(E\right), \quad i=0,1.
	\]
	Hence, the variance reduction is given by
	\[
	\|S_\ba\|_2^2 = \max\{\|\ba^0\|_2^2,\|\ba^1\|_2^2\}.
	\]
	For some schemes studied in this work, this quantity is: For $d=0,1$, if $2n-1<\lambda<2n$,
	\begin{equation*}
		\|S_{1,\mathbf{w^\lambda}}\|_2^2= \max\left\{\sum_{l=-n+1}^{n-1} \left(\frac{w^\lambda_{2l}}{||\mathbf{w^\lambda_0}||_1}\right)^2 , \
		\sum_{l=-n+1}^{n}\left(\frac{w_{2l-1}^\lambda}{||\mathbf{w^\lambda_1}||_1}\right)^2 \right\} < 1.
	\end{equation*}
	The last quantity is less than one owned to the constant reproduction and the positivity of the coefficients.
	In case that $\phi(x) = 1$, then $\|S_{1,\texttt{rect}^\lambda}\|_2^2 = \lfloor \lambda \rfloor^{-1} = (2n-1)^{-1}$, which is the lowest value that can be obtained with a rule of this length. For $d=2,3$, $\phi(x)=1$ and $2n-1<\lambda<2n$,
	\[\|S_{3,\texttt{rect}^\lambda}\|_2^2 = \frac{9 n^2-9 n-3}{8 n^3-12 n^2-2 n+3} > (2n-1)^{-1}, \qquad \forall n\geq 2,\]
	which maximum is achieved for $n=2$ (i.e. $3<\lambda<4$, corresponding to the interpolatory DD4 scheme), which is 1.
	
	Two results can be derived: First, if the variance is reduced in each iteration by a factor $\|S_{d,\mathbf{w^\lambda}}\|_2^2<1$, then the limit function has variance 0. Second, since $\lim_{n\to\infty}\|S_{3,\texttt{rect}^\lambda}\|_2^2 = 0$, the noise tends to be completely remove when the mask support tends to $\infty$.	
	
	For any choice of $\phi(x)$, an asymptotic result can be given for the noise reduction using an argument  similar to Section \ref{approx}: If $a^i_l = n^{-1}H(l/n) + \cO(n^{-2})$, for some function $H$, $i=0,1$, then
	\[
	\lim_{n\to\infty} \|\ba^i\|_2^2 =\lim_{n\to\infty} n^{-2}\sum_{l=1-n}^{L_n+i} H(l/n)^2 = n^{-1}\int_{-1}^1 H(t)^2 dt,
	\]
	so that the noise reduction factor behaves asymptotically as
	\begin{equation} \label{eq:variance_limit}
		\|S_\ba\|_2^2 = n^{-1} \|H\|_2^2 + \cO(n^{-2}).
	\end{equation}
	Under these assumptions, we observe that the noise is always removed after an iteration when $n\to\infty$.
	In the Tables \ref{tab:noise1} and \ref{tab:approx3} we compute $H(t):= \lim_{n\to\infty} n a^i_{t n}$ and the factor $\|H\|_2^2$ for several $\phi$ functions, $d=0,1,2,3$.
	
	\subsection{An equilibrium between approximating and denoising} \label{sec:pareto}

	We have seen that, in order to maximize the approximation and denoising capabilities, the values $I_4(H)$ and $\|H\|_2$ should be minimized. This is a multi-objective minimization problem, which solutions form a Pareto front that we have estimated using the MATLAB optimization toolbox. Here we will only consider the case $d=2,3$, but a similar analysis can be performed with $d=0,1$.
	
	First, observe Figure \ref{fig:paretoxicomparison}-left. We find out that $\phi(x) = (1-x^p)^q$ is always more convenient than $\phi(x) = e^{-\xi x}$, meaning that for each value of $\xi$ there exists some pair $(p,q)$ for which $\phi(x) = (1-x^p)^q$ approximates and denoises better than $\phi(x) = e^{-\xi x}$.
	It can also be affirm that $\phi(x)=1$ is in the Pareto front and it the best for noise reduction and the worst for approximating. In the other extreme would be an interpolatory scheme, with the best approximation capability but the worst denoising power.
	
	The Pareto-optimal values $(p,q)$ for $\phi(x) = (1-x^p)^q$ form a curve (see Figure \ref{fig:paretoxicomparison}-right) which seems to interpolate the integer values $(2,1)$ and $(4,5)$.
	\begin{figure}[!h]
		\centering
			\includegraphics[width=0.47\linewidth]{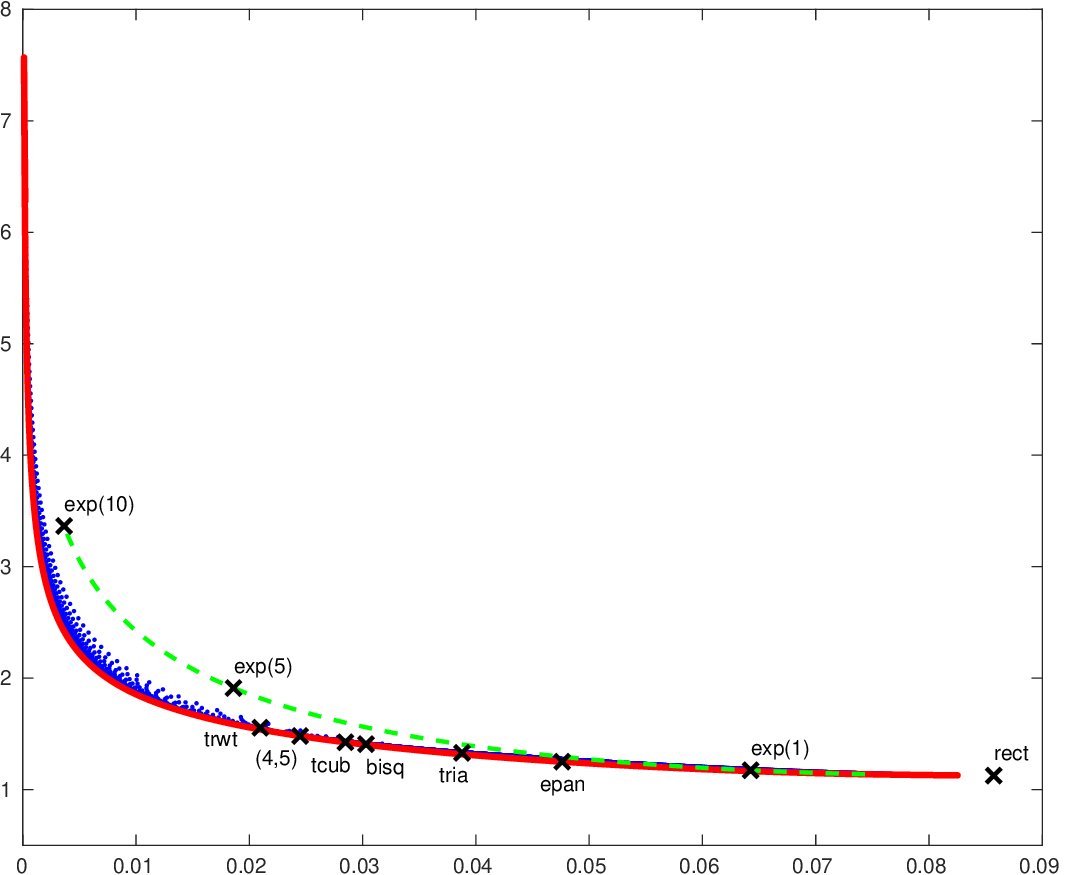}  \includegraphics[width=0.47\linewidth]{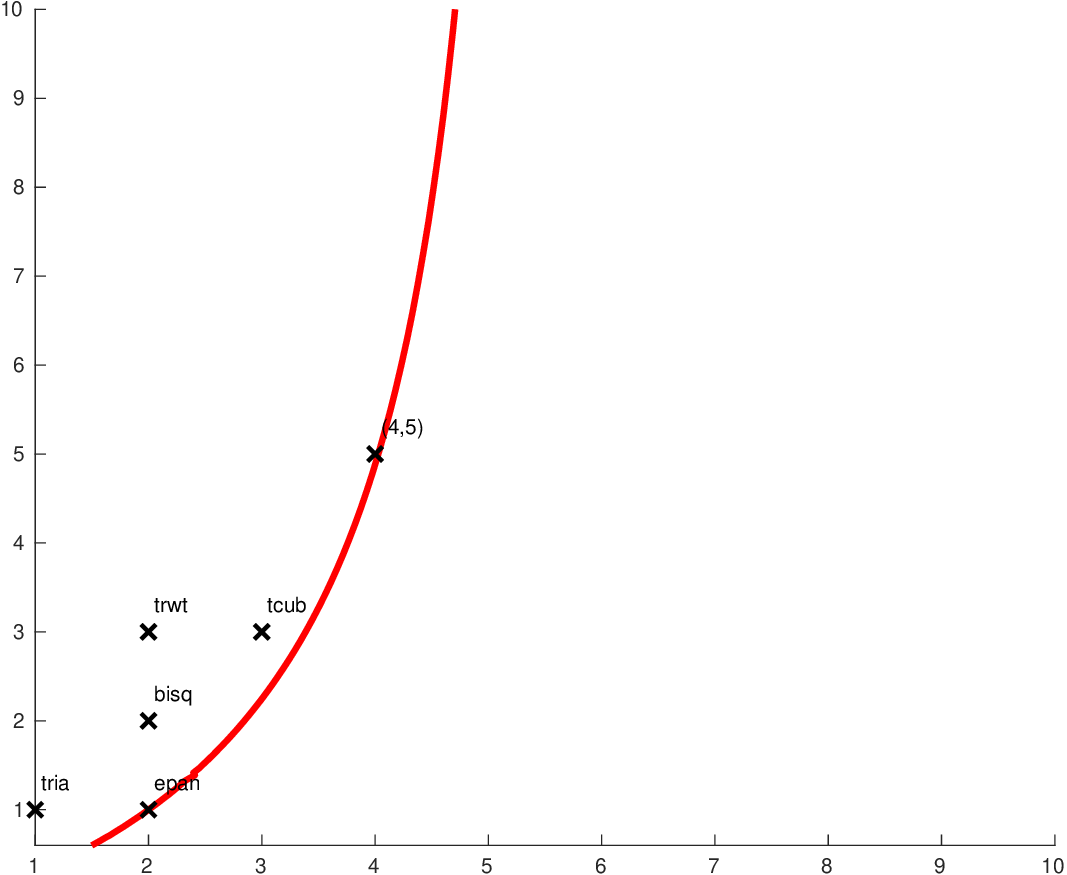}
		\caption{Left, the pair of values $(2I_4(H),\|H\|_2^2)$ for several choices of $\phi$. Thus, the lower $x$ and $y$ axis values, the better approximation and denoising capabilities, respectively. Blue, $\phi(x) = (1-x^p)^q$ for several values $(p,q)$ pairs such that $1\leq p \leq 20$, $\frac12\leq q \leq 20$; red, the Pareto front of the previous pairs; green, $\phi(x) = \exp(-\xi x)$ for $\frac12\leq \xi\leq 10$. Right, the red line represents the pairs of values ($p,q$) for which $\phi(x) = (1-x^p)^q$ is Pareto-optimal.}
		\label{fig:paretoxicomparison}
	\end{figure}

	In conclusion, we recommend the use of {\tt rect} to obtain the best denoising. However, with {\tt epan} the noise increases by $11.11 \%$ while the approximation error is reduced by $44.44 \%$ compared to {\tt rect}. If the approximation is desired to be prioritized, $\phi(x) = (1-x^4)^5$ is a good choice, since the noise increases by $31.58 \%$ while the approximation error is reduced by $71.43 \%$, compared to {\tt rect}. The rest of the $(p,q)$ values related to Table \ref{tabla1nucleos} are near to be optimal and can be used as well for other approximating-denoising balances. We recommend to never use {\tt exp($\xi$)}.
	
	Just to mention that for $d=0,1$ similar conclusions can be obtained.  For that polynomial degrees, the weight functions $\phi(x) = \exp(-\xi x)$ are also worse than $\phi(x) = (1-x^p)^q$. The weight function {\tt epan} is still Pareto optimal, but the pair  $(p,q)=(4,5)$ is not.

	\section{Numerical experiments}\label{expnum}

	In this section, we present some numerical examples to show how the new schemes work for the generation of curves. We check that the subdivision schemes are convergent for $d=0,1,2,3$ and that the curve present $\mathcal{C}^1$ smoothness (but not $\mathcal{G}^1$, meaning that kinks can be produced). We analysed the approximating and denoising capabilities to numerically validate the results in Sections \ref{approx} and \ref{noise}. Only for $d=0,1$, we test the conservation of the monotonicity applying the schemes to fit a non-decreasing initial data. Finally, we perform a numerical test using the discretization of a discontinuous function and observe that the proposed methods avoid Gibbs phenomenon in the neighbourhood of an isolated discontinuity for $d=0,1$.
	
	\begin{figure}[!h]
		\centering
		\begin{tabular}{cccc}
			$S_{1,{\tt rect}^{9.5}}$ & $S_{3,{\tt rect}^{15.5}}$ & $S_{1,{\tt p4q5}^{9.5}}$ & $S_{3,{\tt p4q5}^{15.5}}$ \\ \includegraphics[width=0.21\linewidth]{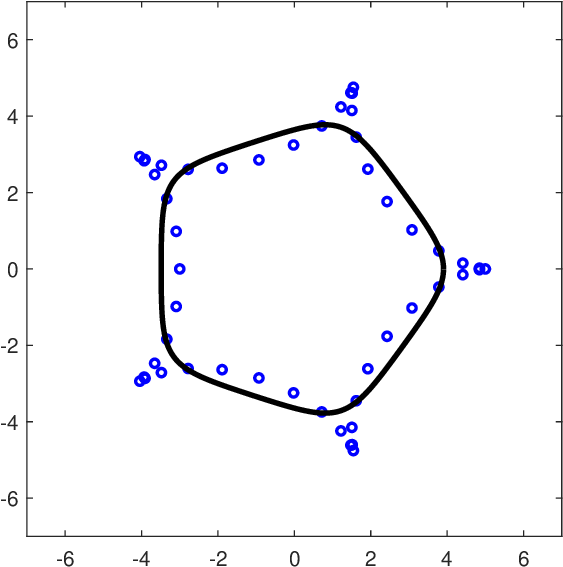} &
			\includegraphics[width=0.21\linewidth]{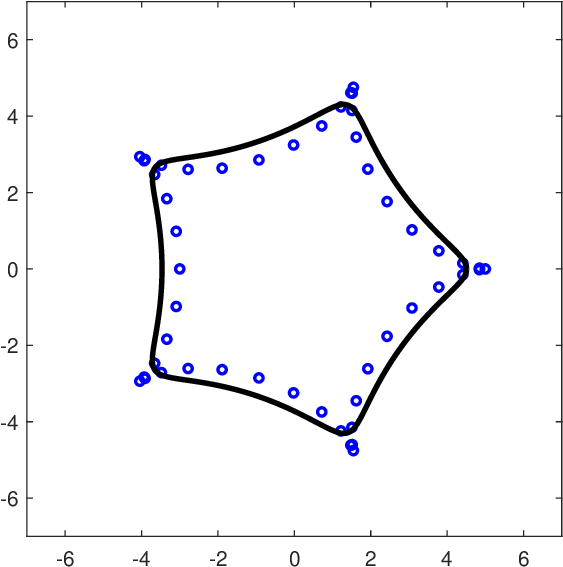} &
			\includegraphics[width=0.21\linewidth]{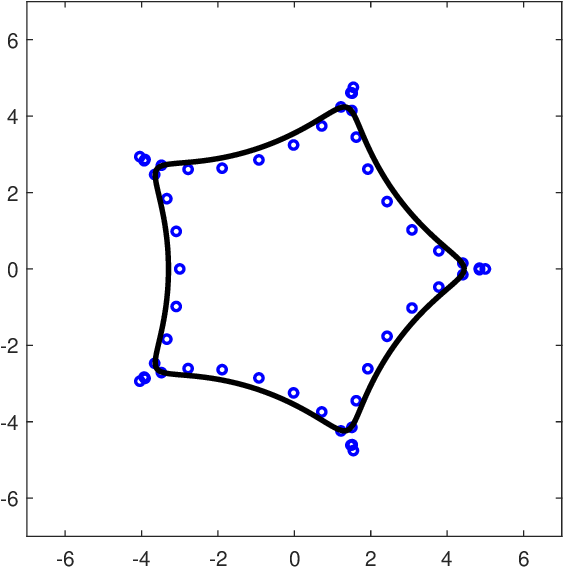} &
			\includegraphics[width=0.21\linewidth]{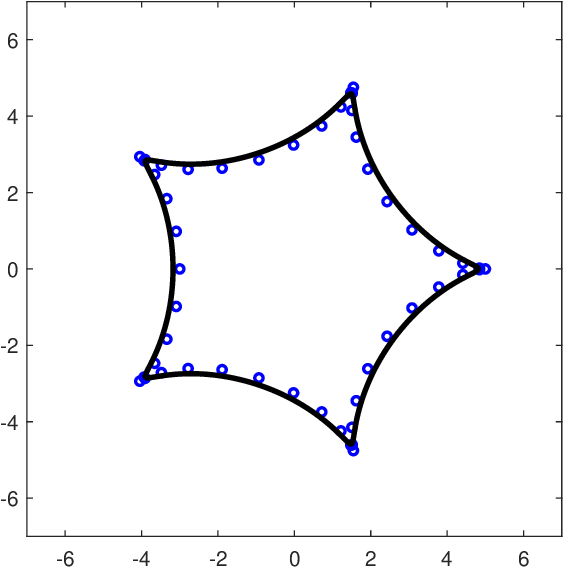} \\
			\includegraphics[width=0.21\linewidth]{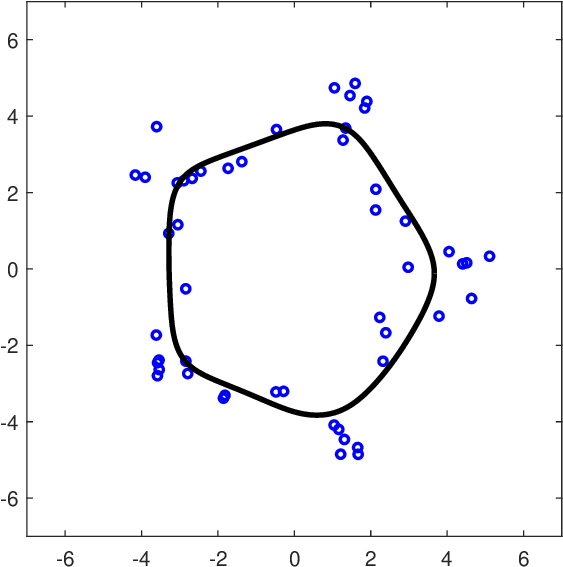} & \includegraphics[width=0.21\linewidth]{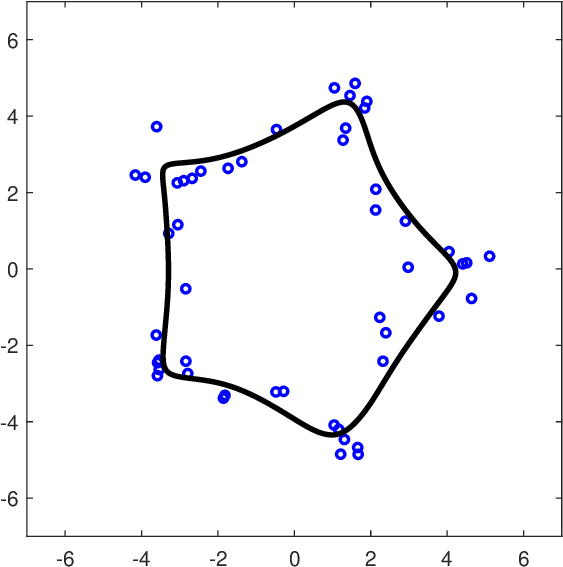} &
			\includegraphics[width=0.21\linewidth]{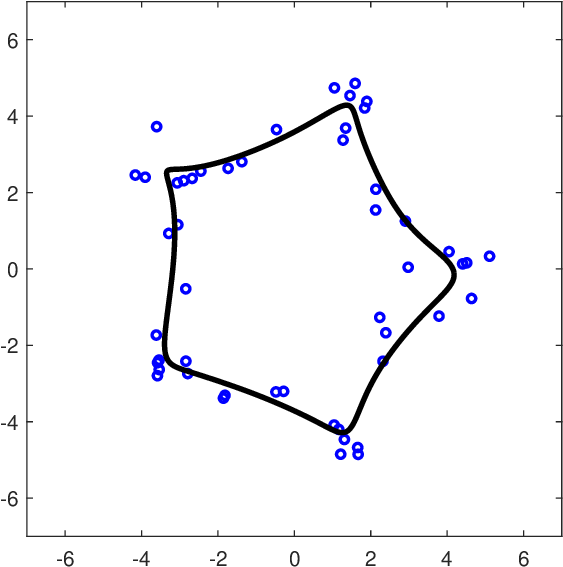} &
			\includegraphics[width=0.21\linewidth]{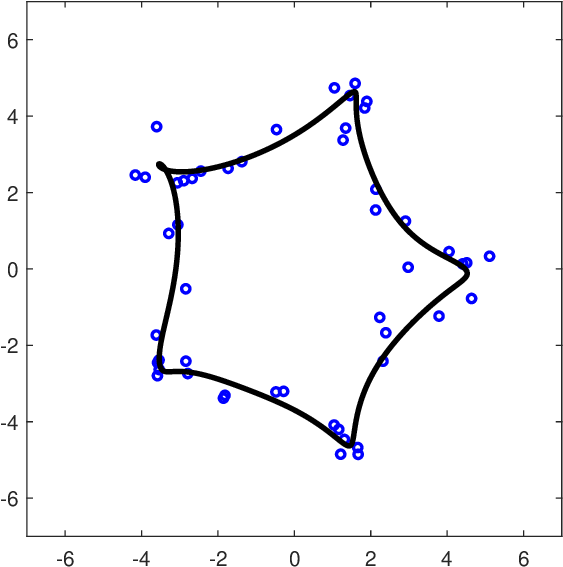} \\
			\includegraphics[width=0.21\linewidth]{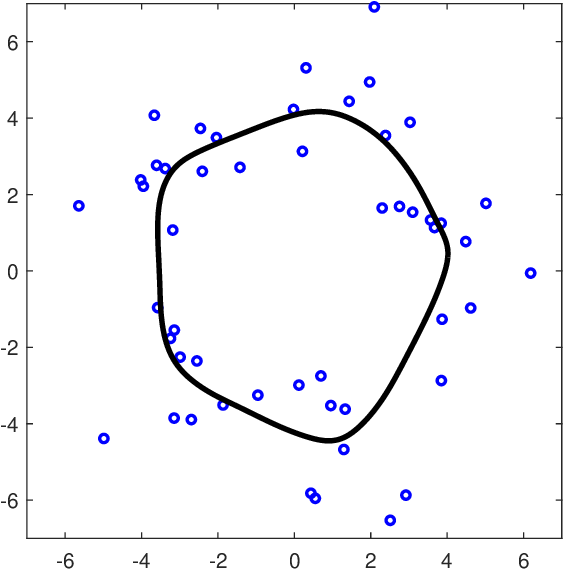} & \includegraphics[width=0.21\linewidth]{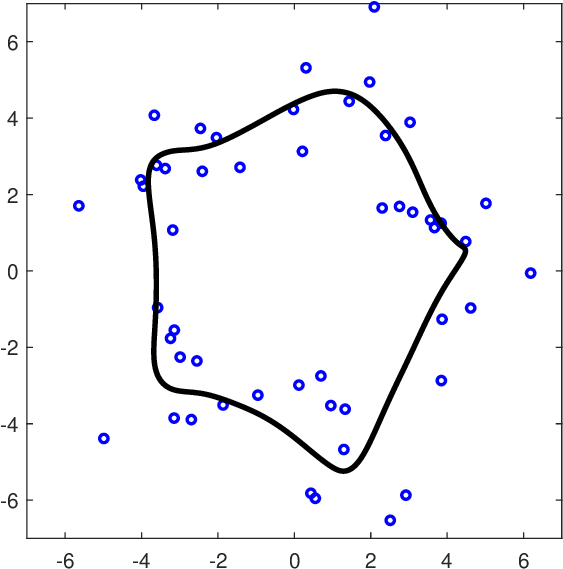} &
			\includegraphics[width=0.21\linewidth]{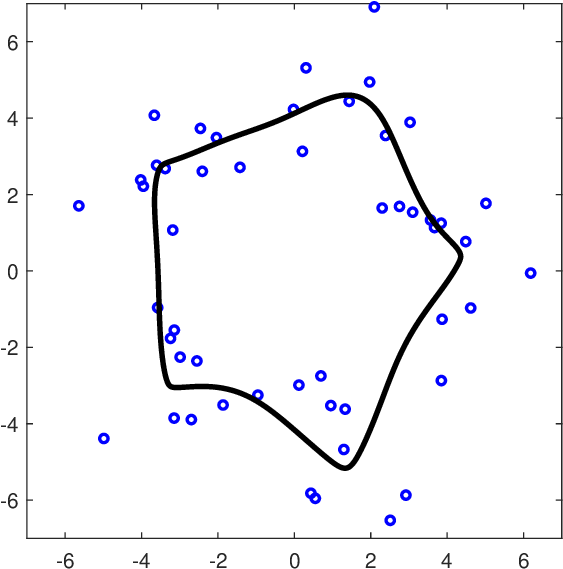} &
			\includegraphics[width=0.21\linewidth]{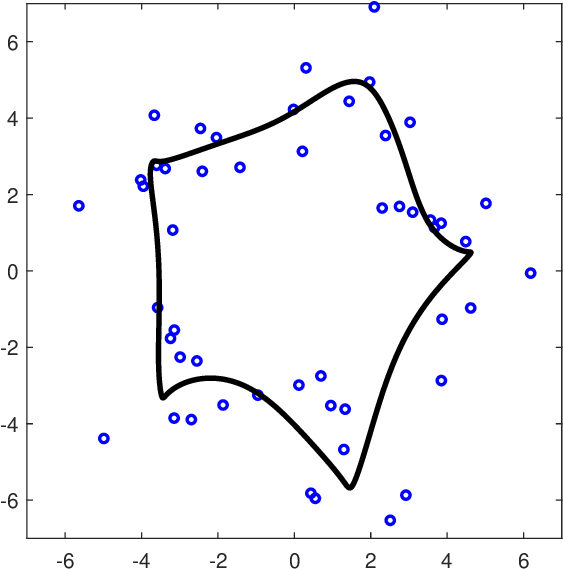} \\
		\end{tabular}
		\caption{Several subdivision schemes (by columns) applied to the star-shaped data in
			\eqref{eqnumexpej2}. In the first row, they are applied to the original data. In the second and third row, the data is contaminated by normal noise with $\sigma=0.5$ and $\sigma=1$, respectively.}
		\label{fig:experimento4}
	\end{figure}

\subsection{Application to noisy geometric data}

We start with one of the experiments presented in \cite{dynheardhormannsharon} which consists of a star-shaped curve given by:
\begin{equation}\label{eqnumexpej2}
	F(t) = (4\cos(t)+\cos(4t), 4\sin(t)-\sin(4t)),
\end{equation}
with samples taken at $t^0_j=j\pi/25$ with $j\in\Z$. That is, we consider $\mathbf{f}^0 := F|_{{\mathbf t}^0}$, ${\bf t}^0 =\{t^0_j\}_{j\in\Z}$, i.e. $f^0_j = F(t^0_j)$. Because of the periodicity of the function, we can focus on $j=0,\hdots, 49$. We add Gaussian noise in each component, defining $\tilde{\mathbf{f}}^0=\mathbf{f}^0+\bepsilon^\sigma$ with $\bepsilon^\sigma=\{(\varepsilon^{\sigma,1}_j,\varepsilon^{\sigma,2}_j)\}_{j=0}^{49}$,   being $\varepsilon^{\sigma,l}_j \sim \mathcal{N}(0,\sigma)$, $l=1,2,$ $j=0,\hdots,49$ and $\sigma\in\{0.5,1\}$.
In Figure \ref{fig:experimento4}, we illustrate the results only for two interesting choices of $\phi$, according to the conclusions in Section \ref{sec:pareto}. Nevertheless, the results obtained with the rest of weight functions are graphically similar and they are shown in detail in Table \ref{tab:errores2}.

Without noise, the smaller is $\lambda$ the more accurate are the results for any $\phi$ and $d$. Measuring the approximation error as
\begin{equation*}
	\|S^5 \mathbf{f}^0 - F|_{{\mathbf t}^5}\|_\infty =\max_{j\in \mathbb{Z}}\{\|(S^5 \mathbf{f}^0)_j - F(t^5_j)\|_2\},
\end{equation*}
where $t^5_j := 2^{-5}t^0_j$, $j\in\Z$. As expected, we can see in Table \ref{tab:errores2} that the approximation error is always smaller for $d=2,3$ than $d=0,1$. But also, if we sort the weight function by the approximation order, for any $0\leq d \leq 3$, they would be exactly in the same order as if we sort them by the theoretical approximation power in Tables \ref{tab:noise1} and \ref{tab:approx3}.
Nevertheless, it has to be taken into account that the results in Tables \ref{tab:noise1} and \ref{tab:approx3} have an asymptotic nature, when $h\to0$.
These behaviours are also visible in the first row of Figure \ref{fig:experimento4}.

We measure the noise reduction capability of the schemes with the quantities $\|S^5\bepsilon^{0.5}\|_\infty$ and $\|S^5\bepsilon^{1}\|_\infty$, in Table \ref{tab:errores2}. The ones that show results closer to zero are the schemes with higher denoising capacity. In general, the noise is more reduced when $\lambda$ is larger or $d$ is smaller. Comparing the sorting of the weight functions by its theoretical denoising capability, according to Tables \ref{tab:noise1} and \ref{tab:approx3}, and by the numbers in Table \ref{tab:errores2}, we see that both orderings are the same, in general. Only in some particular cases this ordering is slightly changed. The reason may be that the results on Tables \ref{tab:noise1} and \ref{tab:approx3} are asymptotical, for $\lambda\to\infty$. But also, the reduction is in terms of the variance of the statistical distribution. Hence, the same experiment should be repeated many times and the results averaged in order to obtain a more consistent comparison.

In Figure \ref{fig:experimento4}, we can see how important the choice of the weight function is to increase the approximation capability (and only losing a bit of denoising capability). In turn, taking $d=2,3$ gives better approximations and $\lambda$ can also be increased to reduce noise.

Of course, during our study we generated much more graphics than the ones here presented. In some of them, specially in presence of noise, artefacts may appear, such as auto-intersections or kinks, proving that it does not provide $\mathcal{G}^1$ curves, even if the scheme is $\cC^1$. By taking $\lambda$ larger, the artefacts usually disappear and curves become softer.

\begin{table}[!h]
	\centering
	\begin{tabular}{|l|rrrr|rrrr|}\hline
		& \multicolumn{4}{c|}{$d=0,1$} & \multicolumn{4}{c|}{$d=2,3$} \\
		$\lambda$ & 3.7 & 5.8 & 9.5 & 15.5 &3.7 & 5.8 & 9.5 & 15.5\\\hline
		 & \multicolumn{4}{c|}{{\tt rect}}  &  \multicolumn{4}{c|}{{\tt rect}}\\
		{$\|S^5 \mathbf{f}^0 - F|_{\mathbf{t}^5}\|_\infty$} & 1.943e-1 &4.578e-1 &1.095e-0 &1.844e-0 &1.487e-3 &1.038e-2 &9.402e-2 &4.899e-1 \\
		$\|S^5\bepsilon^0_{0.5}\|_\infty$                     & 7.496e-1 &5.256e-1 &3.272e-1 &2.506e-1 &1.459e-0 &9.363e-1 &7.312e-1 &4.073e-1   \\
		$\|S^5\bepsilon^0_{1}\|_\infty$                       & 1.263e-0 &9.790e-1 &6.786e-1 &4.712e-1 &3.143e-0 &1.691e-0 &1.151e-0 &9.408e-1     \\ \hline
		 & \multicolumn{4}{c|}{{\tt tria}}  &  \multicolumn{4}{c|}{{\tt tria}}\\
		{$\|S^5 \mathbf{f}^0 - F|_{\mathbf{t}^5}\|_\infty$} & 1.158e-1 &2.695e-1 &6.393e-1 &1.254e-0 &1.487e-3 &6.683e-3 &4.927e-2 &2.624e-1 \\
		$\|S^5\bepsilon^0_{0.5}\|_\infty$                     & 7.604e-1 &6.518e-1 &4.235e-1 &2.859e-1 &1.459e-0 &9.048e-1 &8.035e-1 &5.298e-1   \\
		$\|S^5\bepsilon^0_{1}\|_\infty$                       & 1.514e-0 &1.170e-0 &8.941e-1 &6.159e-1 &3.143e-0 &1.957e-0 &1.327e-0 &1.086e-0     \\
		\hline
		 & \multicolumn{4}{c|}{{\tt bisq}}  &  \multicolumn{4}{c|}{{\tt bisq}}\\
		{$\|S^5 \mathbf{f}^0 - F|_{\mathbf{t}^5}\|_\infty$} & 1.012e-1 &2.363e-1 &5.648e-1 &1.152e-0 &1.487e-3 &5.986e-3 &3.876e-2 &2.157e-1  \\
		$\|S^5\bepsilon^0_{0.5}\|_\infty$                     & 7.785e-1 &6.816e-1 &4.603e-1 &2.957e-1 &1.459e-0 &9.140e-1 &8.382e-1 &5.642e-1   \\
		$\|S^5\bepsilon^0_{1}\|_\infty$                       & 1.580e-0 &1.209e-0 &9.301e-1 &6.546e-1 &3.143e-0 &1.959e-0 &1.379e-0 &1.101e-0   \\ \hline
		  & \multicolumn{4}{c|}{{\tt trwt}}  &  \multicolumn{4}{c|}{{\tt trwt}}\\
		{$\|S^5 \mathbf{f}^0 - F|_{\mathbf{t}^5}\|_\infty$} & 7.892e-2 &1.859e-1 &4.551e-1 &9.729e-1 &1.487e-3 &4.134e-3 &2.725e-2 &1.575e-1 \\
		$\|S^5\bepsilon^0_{0.5}\|_\infty$                     & 8.353e-1 &7.111e-1 &5.256e-1 &3.068e-1 &1.459e-0 &9.860e-1 &8.553e-1 &6.289e-1  \\
		$\|S^5\bepsilon^0_{1}\|_\infty$                       & 1.794e-0 &1.290e-0 &1.002e-0 &7.340e-1 &3.143e-0 &2.128e-0 &1.440e-0 &1.130e-0  \\ \hline
		  & \multicolumn{4}{c|}{{\tt epan}}  &  \multicolumn{4}{c|}{{\tt epan}}\\
		{$\|S^5 \mathbf{f}^0 - F|_{\mathbf{t}^5}\|_\infty$}  & 1.402e-1 &3.209e-1 &7.481e-1 &1.416e-0 &1.487e-3 &8.265e-3 &6.033e-2 &3.161e-1\\
		$\|S^5\bepsilon^0_{0.5}\|_\infty$                      & 7.497e-1 &6.224e-1 &3.738e-1 &2.832e-1 &1.459e-0 &9.054e-1 &7.975e-1 &4.861e-1  \\
		$\|S^5\bepsilon^0_{1}\|_\infty$                        & 1.395e-0 &1.113e-0 &8.341e-1 &5.575e-1 &3.143e-0 &1.798e-0 &1.279e-0 &1.051e-0   \\
		\hline
		  & \multicolumn{4}{c|}{{\tt tcub}}  &  \multicolumn{4}{c|}{{\tt tcub}}\\
		{$\|S^5 \mathbf{f}^0 - F|_{\mathbf{t}^5}\|_\infty$} & 1.010e-1 &2.382e-1 &5.716e-1 &1.171e-0 &1.487e-3 &5.726e-3 &3.656e-2 &2.072e-1  \\
		$\|S^5\bepsilon^0_{0.5}\|_\infty$                     & 7.787e-1 &6.872e-1 &4.554e-1 &3.023e-1 &1.459e-0 &9.214e-1 &8.547e-1 &5.677e-1   \\
		$\|S^5\bepsilon^0_{1}\|_\infty$                       & 1.547e-0 &1.203e-0 &9.221e-1 &6.453e-1 &3.143e-0 &1.935e-0 &1.391e-0 &1.104e-0  \\ \hline
  & \multicolumn{4}{c|}{{\tt p4q5}}  &  \multicolumn{4}{c|}{{\tt p4q5}}\\
		{$\|S^5 \mathbf{f}^0 - F|_{\mathbf{t}^5}\|_\infty$} & 9.509e-2 &2.286e-1 &5.533e-1 &1.147e-0 &1.487e-3 &4.666e-3 &3.188e-2 &1.840e-1  \\
		$\|S^5\bepsilon^0_{0.5}\|_\infty$                     & 7.928e-1 &6.993e-1 &4.649e-1 &3.080e-1 &1.459e-0 &9.606e-1 &8.729e-1 &5.885e-1   \\
		$\|S^5\bepsilon^0_{1}\|_\infty$                       & 1.569e-0 &1.214e-0 &9.299e-1 &6.542e-1 &3.143e-0 &1.984e-0 &1.413e-0 &1.118e-0  \\ \hline
	\end{tabular}
	\caption{Analysis of the approximation and denoising capabilities for the different subdivision schemes with $d=0,1,2,3$
		and $\lambda=3.7, 5.8, 9.5$ and 15.5.}\label{tab:errores2}
\end{table}

\subsection{Approximation error when $\lambda$ is being increased} \label{sec:lambda_increase}

In this section we challenge formula \eqref{eq:aprox_error} with a suited experiment.
Let us consider $G(x) = \cos(\pi x)$ and the initial data $\mathbf{g}^{0,h}=\{g^{0,h}_j\}_{j\in\mathbb{Z}}$ and  $\widetilde{\mathbf{g}}^{0,h}=\{\widetilde{g}^{0,h}_j\}_{j\in\mathbb{Z}}$ with
\(
g^{0,h}_j = G(j h),
\)
\(
\widetilde{g}^{0,h}_j = g^{0,h}_j + \epsilon_j,
\)
\(
\epsilon_j\sim U\left (\left [-\frac14,\frac14\right ]\right ),
\)
where $U(I)$ is the uniform distribution in the interval $I$. We consider the spacings $h_k=10^{-k}$ and the support parameters $\lambda_k = 3.5 + 10^{k-1} = 3.5 + 0.1/h_k$, $k=1,2,3,4$. The value $\lambda_k$ is modified accordingly to $h_k$ to maintain almost constant the support of the basic limit function, which determines the influence of each data point on the limit function.
The results of applying 5 iterations of the scheme $S_{3,\texttt{rect}^{\lambda_k}}$ to $\widetilde{\mathbf{g}}^{0,h_k}$, for $k=1,2,3,4$, are shown in Figure \ref{fig:noisereductiond2n3}. On the one hand, it shows how the noise after five iterations tends to 0 if $k\to\infty$, but slowly, since the variance decay speed is $\cO(n^{-1})$. On the other hand, the approximation error does not decay to zero, as can be observed in Table \ref{tab:noisereductiond2n3}, where the numbers are never smaller than (and seems to tend to) the asymptotic error estimation in \eqref{eq:aprox_error}, which is (for $j=0$, $i=0$)
\[ \left |2I_4(H)\frac{G^{(4)}(0)}{4!} h_k^4 n_k^{4} \right | = \frac{3}{35}\frac{|G^{(4)}(0)|}{24} h_k^{4} (3+0.1/h_k)^{4} \overset{k\to+\infty}{\longrightarrow} \frac{\pi^{4}}{24} \cdot 0.1^{4}\cdot \frac{3}{35} \simeq \text{3.4789e-05}.
\]
This threshold is not a real constrain in practice, since the noise is usually greater than the approximation error (see first row of Table \ref{tab:noisereductiond2n3}). If an approximation error tending to zero is needed,  $n \propto h^{-\frac12}$ can be chosen, for instance.

\begin{table}[!h]
	\centering
	\begin{tabular}{lcccc} \hline
		& $h_1=10^{-1}$ &  $h_2=10^{-2}$ &  $h_3=10^{-3}$ & $h_4=10^{-4}$ \\
		$|(S_{3,\texttt{rect}^{\lambda_k}}^5 \widetilde{\mathbf{g}}^{0,h_k})_0 - G(0)|$ & 4.8215e-2	& 2.3619e-2	& 8.8646e-3	& 1.9704e-4	\\[5pt]
		$|(S_{3,\texttt{rect}^{\lambda_k}}^5 \mathbf{g}^{0,h_k})_0 - G(0)|$			& 5.9734e-3	& 9.6240e-5	& 4.1201e-5	& 3.7387e-5	\\\hline
	\end{tabular}
	\caption{The approximation error at $x=0$ after five iterations of $S_{3,\texttt{rect}^{\lambda_k}}$ applied to data with and without noise.}
	\label{tab:noisereductiond2n3}
\end{table}

\begin{figure}[!h]
	\centering
	\includegraphics[width=0.45\linewidth]{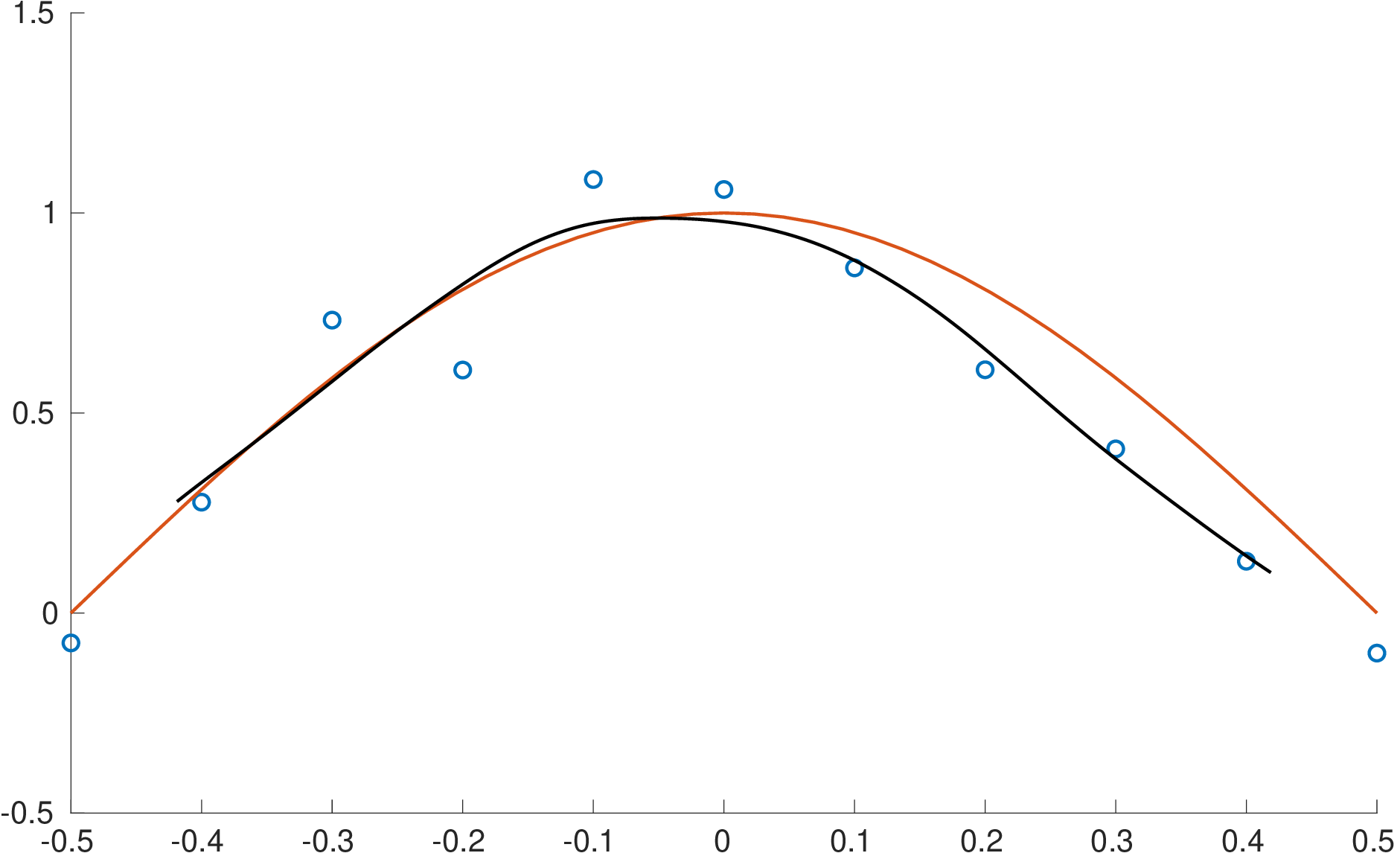}
	\includegraphics[width=0.45\linewidth]{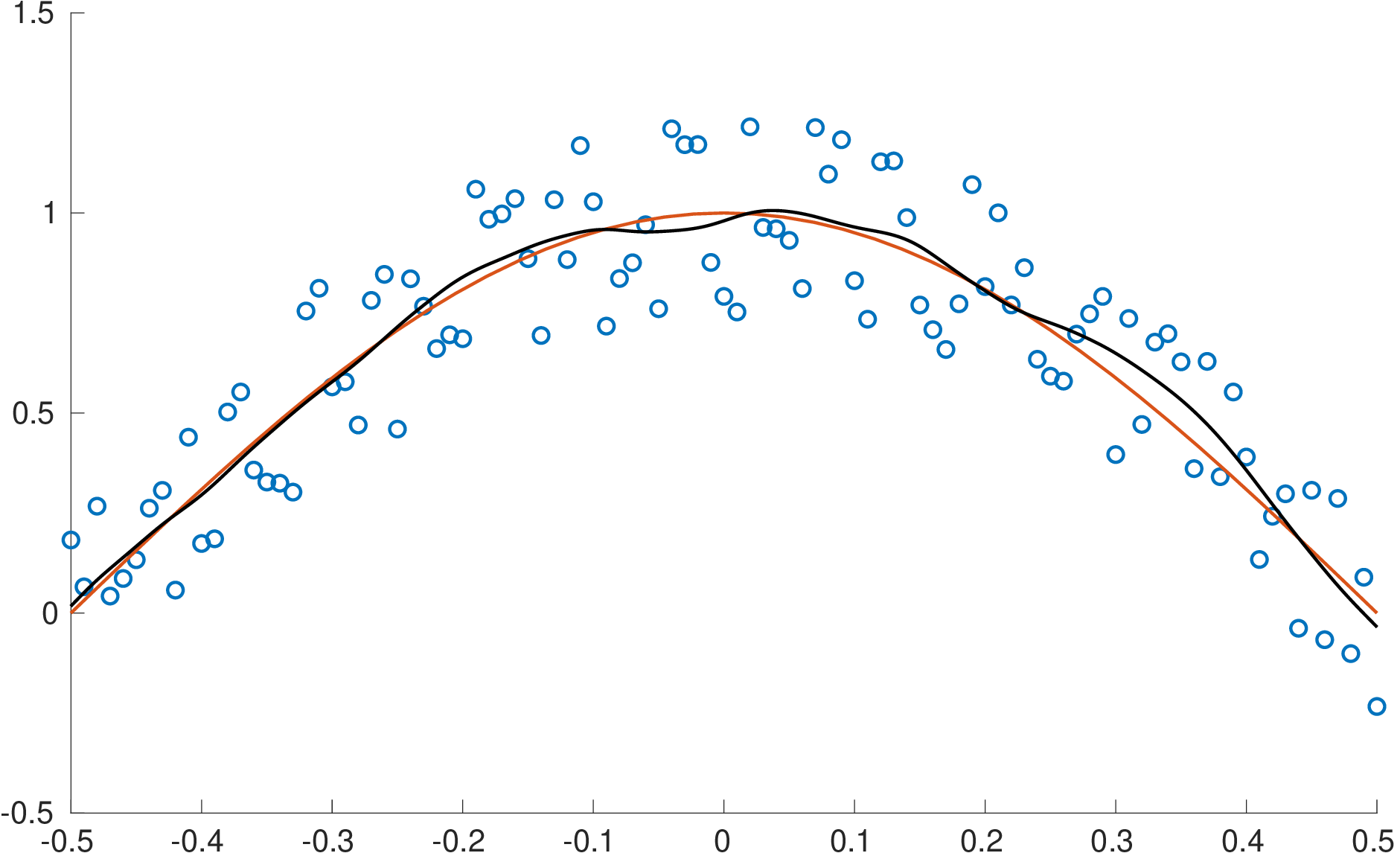}
	\includegraphics[width=0.45\linewidth]{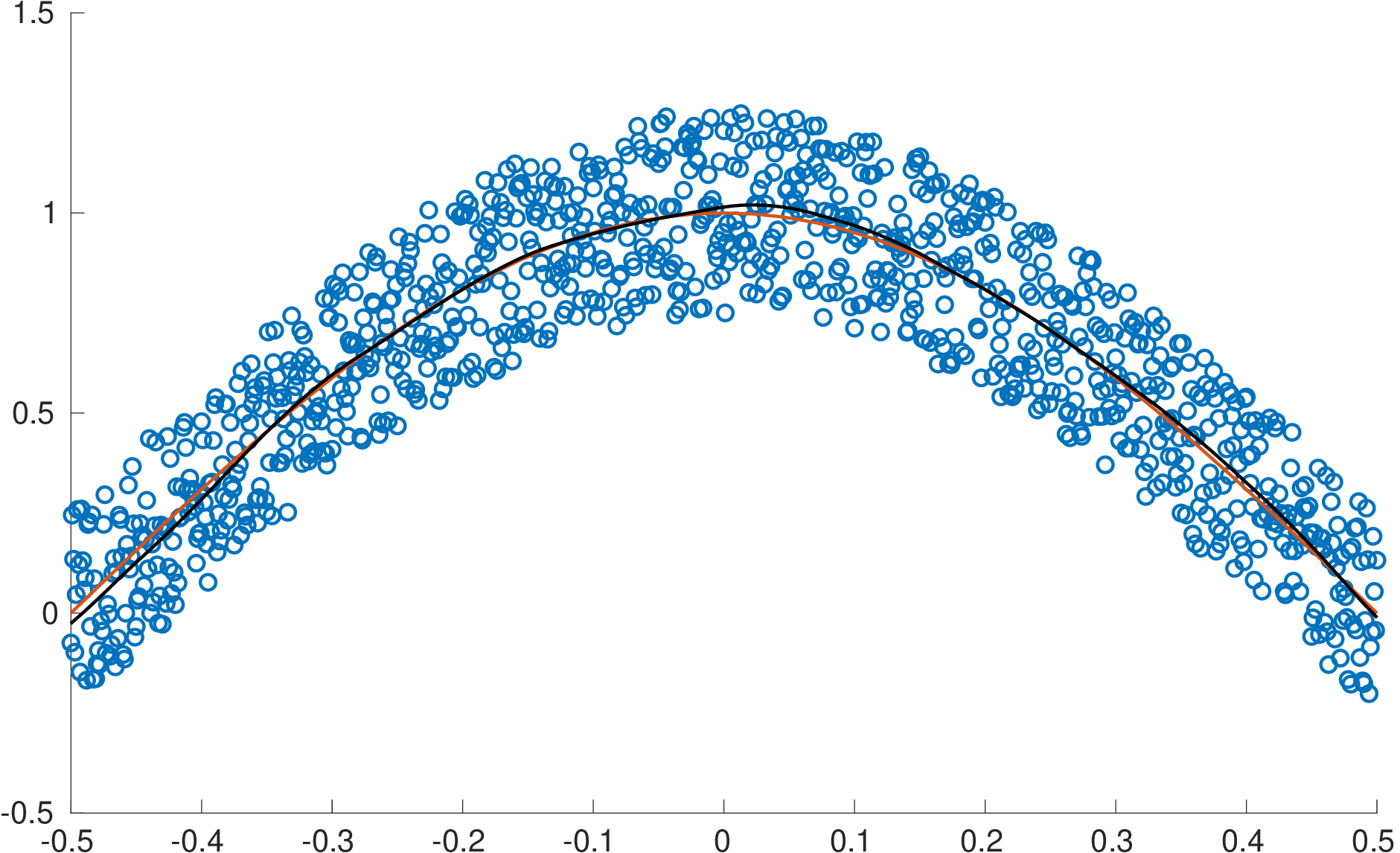}
	\includegraphics[width=0.45\linewidth]{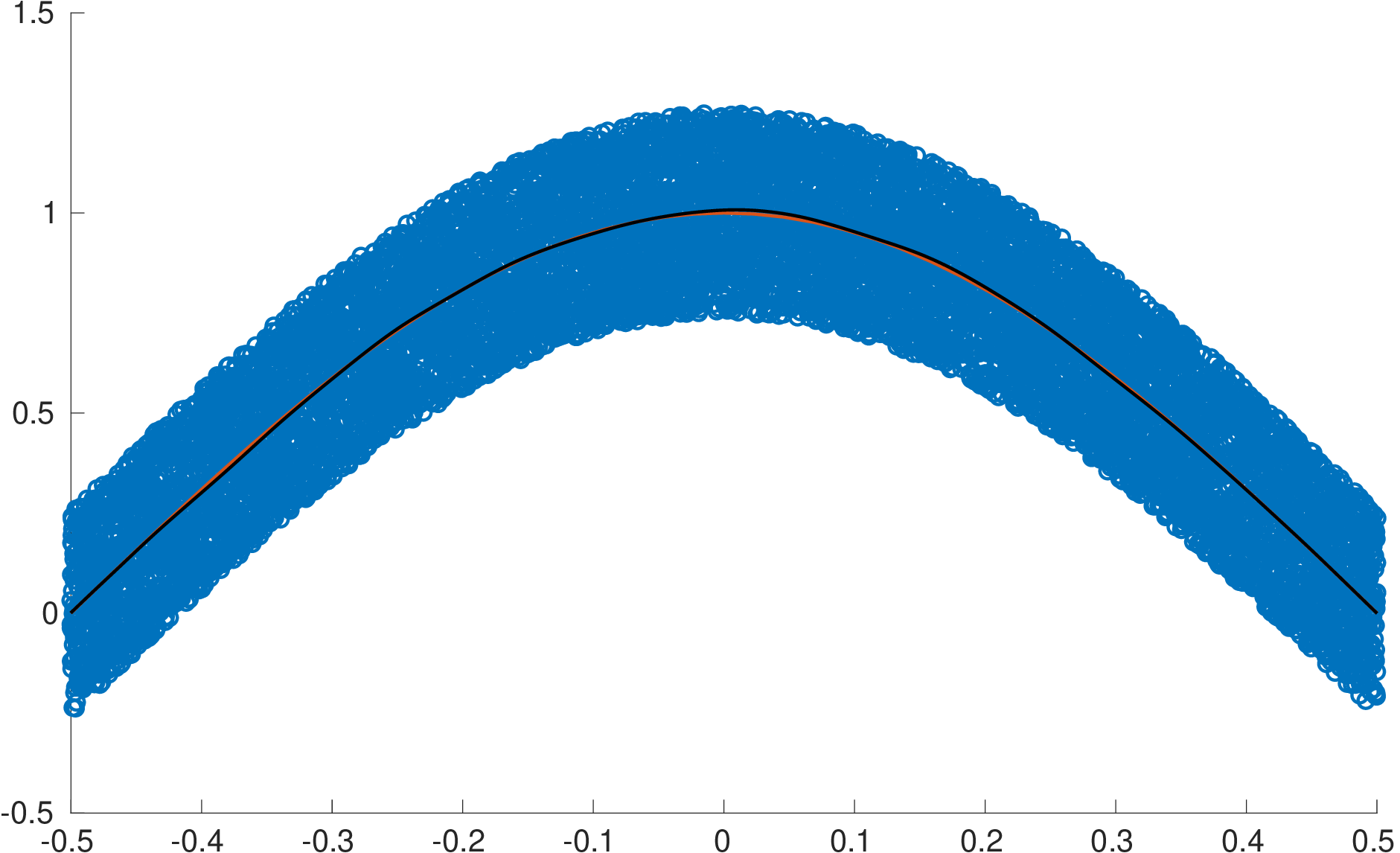}
	\caption{Five iterations of $S_{3,\texttt{rect}^{\lambda_k}}$ applied to $\widetilde{\mathbf{g}}^{0,h_k}$, for $k=1,2,3,4$. The blue circles are the initial data, the red line is the smooth function $G$ and the black line represents the limit function. The parameters for each graphic are (by rows): $h_1=10^{-1}$, $\lambda_1 = 4.5$;  $h_2=10^{-2}$, $\lambda_2 = 13.5$;  $h_3=10^{-3}$, $\lambda_3 = 103.5$;  $h_4=10^{-4}$, $\lambda_4 = 1003.5$.}
	\label{fig:noisereductiond2n3}
\end{figure}

\subsection{Avoiding Gibbs phenomenon}
	
	In this section we confirm that the subdivision schemes based on weighted-least squares with $d=0,1$ avoid Gibbs phenomenon, as stated in Corollary \ref{cor:gibbs}. To study it, we propose the following experiment. We discretize the function:
	\begin{equation}
		f(x)=\left\{
		\begin{array}{ll}
			\sin(\pi x), &  x\in[0,0.5]; \\
			-\sin(\pi x), & x\in (0.5,1],
		\end{array}
		\right.
	\end{equation}
	in the interval $[0,1]$ with 33 equidistant points, $x_i=i\cdot h$, $i=0,\hdots,32$ and $h=\frac{1}{32}$ and apply the subdivision schemes. We show the results in Figure \ref{fig:experimento8}. It is clearly visualize that the Gibbs phenomenon does not appear around the discontinuity, but there is diffusion, instead. The larger is $\lambda$, the more diffusion, specially when {\tt rect} is used.
	\begin{figure}[H]
		\begin{tabular}{cccc}
			$\lambda=2.5$ & $\lambda=4.5$ & $\lambda=5.5$ & $\lambda=6.5$ \\
			\includegraphics[width=0.225\linewidth]{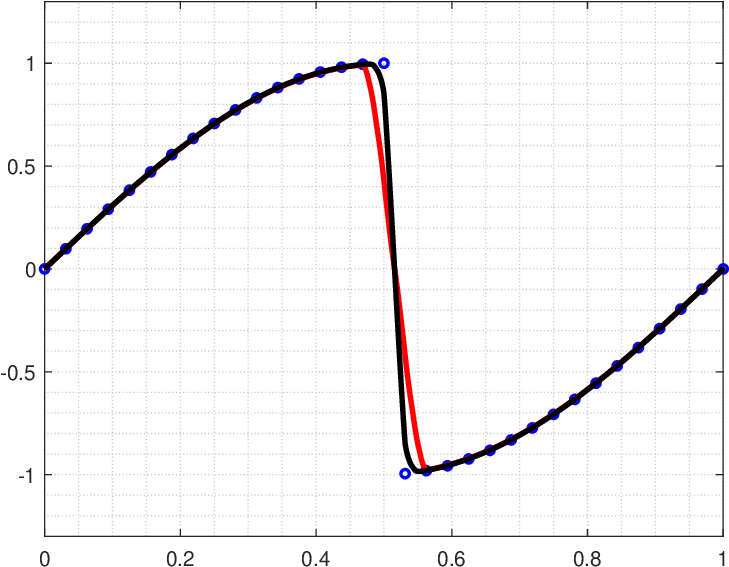} & 	
			\includegraphics[width=0.225\linewidth]{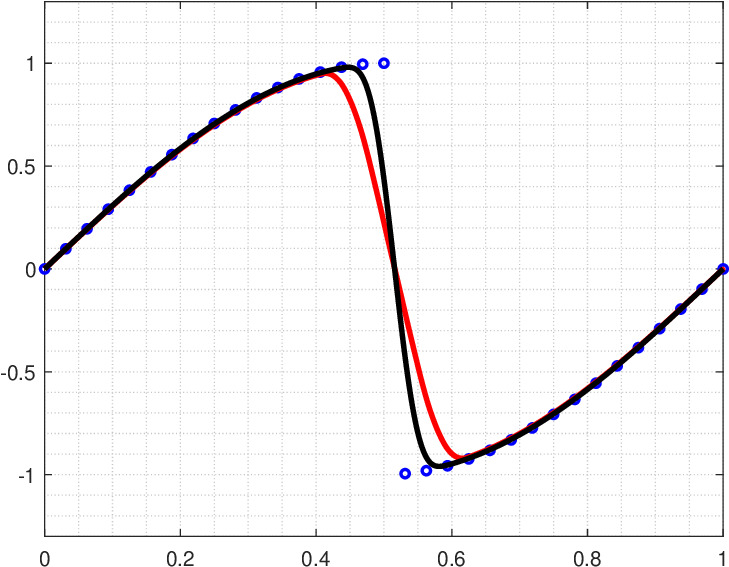} &
			\includegraphics[width=0.225\linewidth]{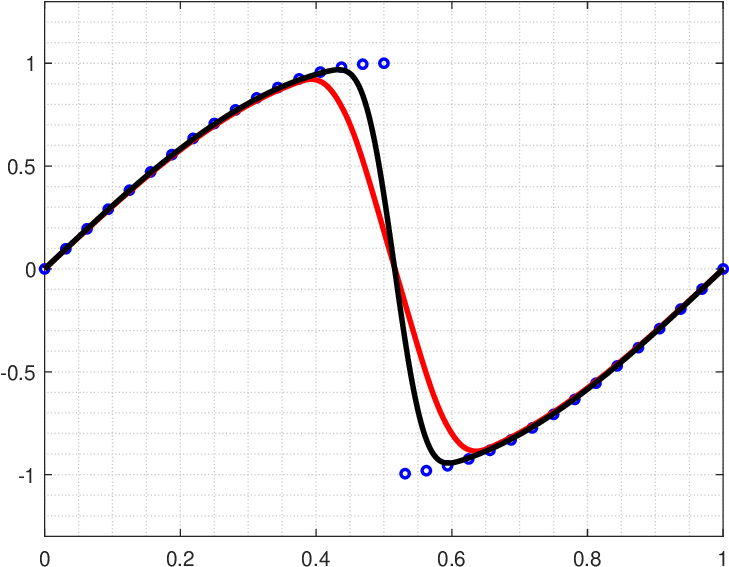} &
			\includegraphics[width=0.225\linewidth]{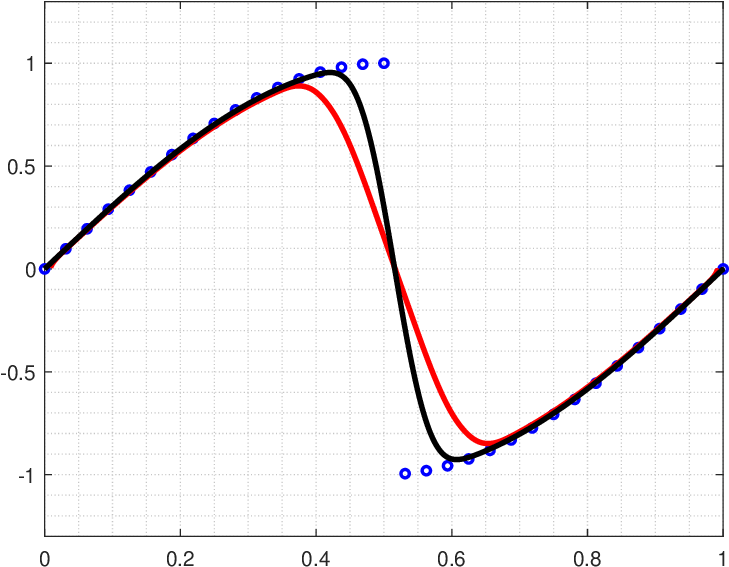}\\[10pt]
		\end{tabular}
		\caption{Limit curves for discontinuous data using subdivision schemes with {\tt rect} (red line) and {\tt trwt} (black line) weight functions, $d=0,1$.}
		\label{fig:experimento8}
	\end{figure}

	\subsection{Monotonicity}
	
	Finally, we introduce the last example in order to see numerically that the new family of the schemes conserves the monotonicity of the data, for $d=0,1$, proved in Corollary \ref{cor:monotone}. We apply $S_{1,{\tt rect}}$ and $S_{1,{\tt trwt}}$ to the data collected in Table \ref{tabladatosmonotonia} (see \cite{arandigadonasantagueda}) and obtain Figure \ref{fig:experimento9}.
\begin{table}[H]
\begin{center}
\begin{tabular}{lrrrrrrrrrrrrrrrrr}
\hline
$x$ & 1 & 2 & 3 & 4 & 5 & 6 & 7 & 8 & 9 & 10 & 11 & 12 & 13 & 14 & 15 & 16 & 17 \\
$f(x)$ &10     & 10 & 10 & 10 & 10 & 10.5 & 10.5 & 10.5 & 10.5 & 15 & 50 & 50 & 50 & 50 & 60 & 85 & 85  \\
\hline
\end{tabular}
\end{center}
\caption{Staircase data.}
\label{tabladatosmonotonia}
\end{table}
	
	\begin{figure}[H]
		\begin{tabular}{cccc}
			$\lambda=2.5$ & $\lambda=4.5$ & $\lambda=5.5$ & $\lambda=6.5$ \\
			\includegraphics[width=0.225\linewidth]{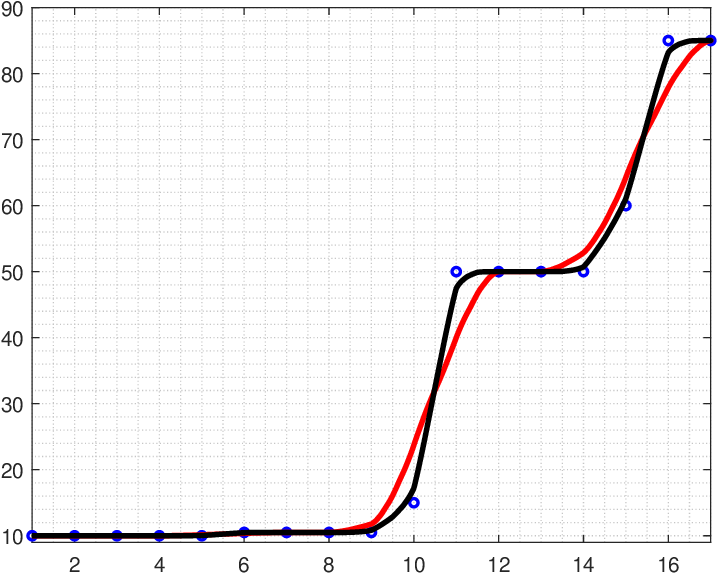} & 	
			\includegraphics[width=0.225\linewidth]{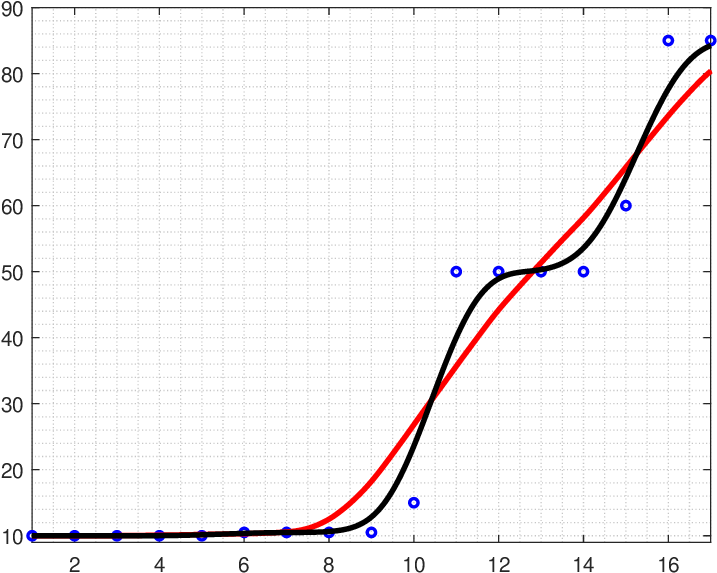} &
			\includegraphics[width=0.225\linewidth]{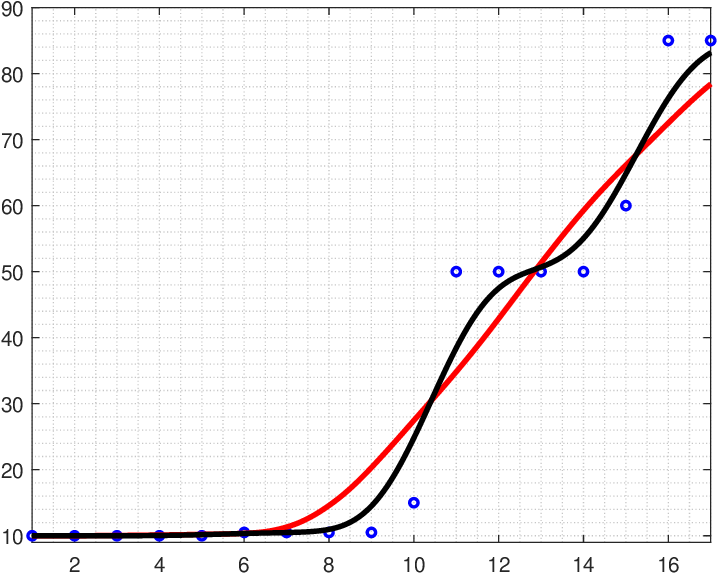} &
			\includegraphics[width=0.225\linewidth]{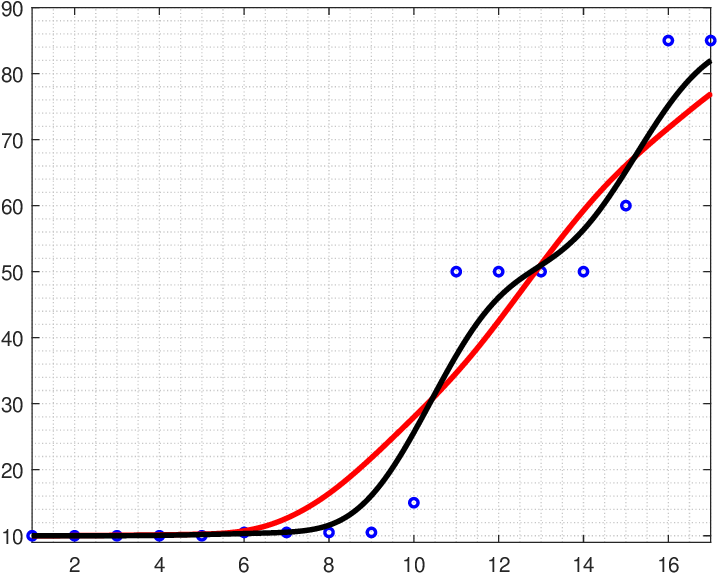}
		\end{tabular}
		\caption{Limit curves for monotone data using subdivision schemes with {\tt rect} (red line) and {\tt trwt} (black line) weight functions, $d=0,1$. }
		\label{fig:experimento9}
	\end{figure}

\section{Conclusions and future work}

In this work, a family of subdivision schemes based on weighted local polynomial regression has been analysed. We introduced the general form of this type of schemes and prove that the schemes corresponding to the polynomial degrees $d=2k$ and $d=2k+1$ coincide, for $k=0,1,2\ldots$  In particular, we analysed in detail the cases $d=0,1,2,3$ with positive weight functions, $\omega$, with compact support.

In the first part of the paper, for $d=0,1$, we took advantage of the positivity of the mask to prove the convergence. Also, under some conditions of the $\omega$ functions, the $\mathcal{C}^1$ regularity of the limit function was demonstrated. Afterward, some properties were proved as monotonicity and elimination of the Gibbs phenomenon effect.
In the second part, we developed a general technique to analyse the convergence of a family of linear schemes and used it in the case $d=2,3$.

The last sections have been dedicated to discussing noise removal and approximation capabilities. We showed how the weight function $\phi$ determines these properties and that it is not possible to find a $\phi$ maximizing both capabilities approximation and noise reduction. This led to a multi-objective optimization problem in which optimal solutions were found along a Pareto front.
Some numerical tests were presented to confirm the theoretical results.

For future works, we can consider the following ideas: The $\cC^1$ regularity of the cases $d=2,3$ were not proven. New theoretical tools such as those presented in Section \ref{sec:tools} and their application to these schemes can be done.

We considered several weight functions $\phi$ from the literature. Now that we know the influence of $\phi$ in the approximation and denoising capabilities, it could be designed $\phi$ trying to improve them. Taking into account that the noise contribution is usually greater than the approximation error on the final curve, the use of an optimized weight function can be even more interesting than augmenting the polynomial degree, since some properties related to the monotonicity and the Gibbs phenomenon are only available for $d=0,1$.
	
If the data present some outliers, a different loss function can provide better results. Mustafa et al. in \cite{mustafa} proposed a variation of Dyn's schemes changing the $\ell^2$-norm by the $\ell^1$-norm in the polynomial regression but they do not prove their properties. The theoretical study of this scheme, as well as the use of different weight functions, can be considered in the future.
\bibliographystyle{these}

\section{Declarations}

\subsection*{Conflict of interest} The authors declare that they have no conflict of interest.

\subsection*{Data Availability Statements} Data sharing not applicable to this article as no datasets were generated or analysed during the current study.

\end{document}